\numberwithin{equation}{section}
\newtheorem{definition}{Definition}[section]
\newtheorem{lemma}[definition]{Lemma}
\newtheorem{theorem}[definition]{Theorem}
\newtheorem{proposition}[definition]{Proposition}
\newtheorem{remark}[definition]{Remark}
\newcommand{\R}{\mathbb{R}}
\newcommand{\N}{\mathbb{N}}
\newcommand{\om}{\Omega}
\newcommand{\Div}{{\rm div}\,}
\newcommand{\cof}{{\rm cof}\,}
\newcommand{\1}{{\bf 1}}
\newcommand{\eps}{\epsilon}
\newcommand{\sca}{\mathcal{A}}
\newcommand{\sco}{a}     
\newcommand{\sch}{\mathcal{H}}
\newcommand{\D}{\mathbb{D}}
\newcommand{\md}{{\rm d}}
\newcommand{\dx}{\, \md x}
\newcommand{\dy}{\, \md y}
\newcommand{\dHone}{\, \md \mathcal{H}^1}
\newcommand\EEE{\color{black}}
\newcommand{\mk}{\color{blue}}
\def\Xint#1{\mathchoice
   {\XXint\displaystyle\textstyle{#1}}%
   {\XXint\textstyle\scriptstyle{#1}}%
   {\XXint\scriptstyle\scriptscriptstyle{#1}}%
   {\XXint\scriptscriptstyle\scriptscriptstyle{#1}}%
   \!\int}
\def\XXint#1#2#3{{\setbox0=\hbox{$#1{#2#3}{\int}$}
     \vcenter{\hbox{$#2#3$}}\kern-.5\wd0}}
\def\dashint{\Xint-}
\begin{document}
\author[J. Bevan]{Jonathan J. Bevan}
\address[J. Bevan]{Department of Mathematics, University of Surrey, Guildford, GU2 7XH, United Kingdom.  (Corresponding author: \textbf{t:} $+44 (0)1483 \  682620$.)}
\email[Corresponding author]{j.bevan@surrey.ac.uk}

\author[M. Kru\v{z}\'{i}k]
{Martin Kru\v{z}\'{i}k}
\address[M. Kru\v{z}\'{i}k]{ Czech Academy of Sciences, Institute of Information Theory and Automation,
Pod vod\'arenskou v\v{e}\v z\'\i\ 4, 182 00, Prague 8, Czechia $\&$
Department of Physics, Faculty of Civil Engineering, Czech Technical University in Prague, Thákurova 7, 166 29 Prague 6, Czechia.
}
\email{kruzik@utia.cas.cz}

\author[J. Valdman] {Jan Valdman} 
\address[J. Valdman]{Czech Academy of Sciences, Institute of Information Theory and Automation, Pod vod\'arenskou v\v{e}\v z\'\i\ 4, 182 00, Prague 8, Czechia $\&$
Department of Mathematics, Faculty of Information Technology, Czech Technical University in Prague, Thákurova 9, 16000 Prague 6, Czechia.
}
\email{jan.valdman@utia.cas.cz}

\subjclass[2020]{49J40, 65K10}

\title[Applications of HIM]{New applications of Hadamard-in-the-mean inequalities to incompressible variational problems}
\begin{abstract} Let $\D(u)$ be the Dirichlet energy of a map $u$ belonging to the Sobolev space $H^1_{u_0}(\om;\R^2)$ and let $\sca$ be a subclass of $H^1_{u_0}(\om;\R^2)$ whose members are subject to the constraint $\det \nabla u = g$ a.e.\@ for a given $g$, together with some boundary data $u_0$.  We develop a technique that, when applicable, enables us to characterize the global minimizer of $\D(u)$ in $\sca$ as the unique global minimizer of the associated functional $F(u):=\D(u)+ \int_{\om} f(x) \, \det \nabla u(x) \, \dx$ in the free class $H^1_{u_0}(\om;\R^2)$. A key ingredient is the mean coercivity of $F(\varphi)$ on $H^1_0(\om;\R^2)$, which condition holds provided the `pressure' $f \in L^{\infty}(\om)$ is `tuned' according to the procedure set out in \cite{BKV23}.  The explicit examples to which our technique applies can be interpreted as solving the sort of constrained minimization problem 
that typically arises in incompressible nonlinear elasticity theory.
\end{abstract}
\maketitle

\section{Introduction}

The chief purpose of this paper is to give a new viewpoint on the classical problem of finding global minimizers of constrained variational problems that are typically encountered in incompressible nonlinear elasticity theory.  The novelty of our technique is that in the cases where it applies, it delivers a unique global energy minimizer in a constrained class by first solving an explicit PDE that is set in an unconstrained (or free) class. Established techniques for treating such variational problems can, in the right circumstances, produce similar PDE as necessary conditions, but without an associated uniqueness principle that makes it possible to distinguish between stationary points and true minimizers.


To be more specific, our approach puts to use the recent results in \cite{BKV23} treating the mean coercivity of functionals of the form 
\begin{align}\label{Fintro}
    F(\varphi):= \int_{\om} |\nabla \varphi|^2 + f(x) \det \nabla \varphi \dx
\end{align}
defined for $\varphi$ in $H_0^1(\om;\R^2)$ and where $f$ is given function in $L^{\infty}(\om)$.  When $F$ is mean coercive on $H_0^1(\om;\R^2)$, i.e. when there is a constant $\gamma>0$ such that 
\begin{align}
    \label{MMC} F(\varphi) \geq \gamma \int_{\om} |\nabla \varphi|^2 \, \dx \quad \forall \varphi \in H_0^1(\om;\R^2),
\end{align}
it becomes possible to minimize $F$ over the general class 
$$H^1_{u_0}(\om;\R^2):=\{
u \in H^1(\om;\R^2): \ u=u_0 \ \textrm{ on } \ \partial \om
\}$$
and so obtain a unique solution $u$ to the associated Euler-Lagrange equation
\begin{align}\label{ELintro}
    \int_{\om} (2 \nabla u + f(x) \, \cof \nabla u) \cdot \nabla \psi = 0 \quad \forall \psi \in H_0^1(\om;\R^2).
\end{align}
This is precisely the type of equation we would expect to have to solve in order to minimize
$$\D(u):=\int_{\om}|\nabla u|^2 \, \dx$$
in the constrained class 
\begin{align}\label{ag} \sca_g:=\{u \in H^1_{u_0}(\om;\R^2): \  \det \nabla u = g \ a.e.\}\end{align}
provided the given function $g$ is such that $\sca_g$ is nonempty.  In this case, the function $f$ is a Lagrange multiplier corresponding to the constraint $\det\nabla u=g$ a.e., while in the continuum mechanics literature $f$ is interpreted as a (hydrostatic) pressure. 

Indeed, in the general case of the constrained variational problem of minimizing the stored energy functional 
\begin{align}\label{E1}
E(u):=\int_{\om}W(\nabla u) \, \dx 
\end{align}
in a class typified by $\sca_g$, the established approach is first to determine by the Direct Method of the Calculus of Variations that a minimizer of $E$ in $\sca_g$ exists and then to derive a version of \eqref{ELintro}, namely
\begin{align}\label{ELgc}
\int_{\om} (DW(\nabla u) + \lambda(x) \, \cof \nabla u) \cdot \nabla \psi = 0 \quad \forall \psi \in C_0^\infty(\om,\R^n)
\end{align}
for a suitable $\lambda$ and where $n$ is typically $2$ or $3$.  One of the clearest expositions of this technique can be found in \cite{FRC99}, where minimizers of an energy like \eqref{E1} in a subclass of the isochoric maps $u$ obeying $\det \nabla u = 1$ a.e.\@ are shown to obey   \eqref{ELgc}.  The heart of the argument is to assume that the global minimizer $y^*$ of $E$ in the constrained class is sufficiently regular and invertible that the problem can be formulated in the deformed configuration $y^*(\om)$.  See also \cite{LO81,FM86}.  In the same spirit, and again in subclasses of isochoric maps, equations of the type \eqref{ELgc} are derived in \cite[Section 4]{Chkara09} for continuous and injective local minimizers satisfying suitable regularity conditions which include the assumption that $(\nabla u)^{-T} \in L_{\textrm{loc}}^s(\om;\R^{n\times n})$ for $s\geq 3$ and $n=2,3$. The resulting hydrostatic pressure $\lambda$ then belongs to $L_{\textrm{loc}}^{\frac{s}{2}}(\om)$. See also \cite{Kara12,Kara14} for further results in this direction.

By contrast with the classical approach described above, which fixes the constraint $\det \nabla u = g$ a.e.\@ in advance (e.g. by setting $g\equiv 1$ in the isochoric case), our process is as follows:
\begin{enumerate}
    \item[1.] Relying on the results of \cite{BKV23}, choose a pressure $f$ in \eqref{Fintro} so that the functional $F$ is mean coercive in the sense of \eqref{MMC}, and take $u_0 \in H^1(\om;\R^2)$;
    \item[2.] Minimize $F$ in the `free' class $H^1_{u_0}(\om;\R^2)$ and let $u$ be a minimizer.  Note that $u$ solves the weak Euler-Lagrange equation $a(u,\psi)=0$ for all test functions $\psi$, where the Euler-Lagrange operator is defined in \eqref{ELop} below;
\item[3.] Define $g:=\det \nabla u$ and let the constrained class of admissible maps $\sca_g$ be given by \eqref{ag};
\item[4.] Employ the identity 
\begin{align}\label{IDENTITY}
    F(v) = F(u) + a(u,v-u) + F(v-u) \quad \quad u,v \in H_{u_0}^1(\om;\R^n),
\end{align}
which, because $u$ solves the Euler-Lagrange equations and by imposing the conditions that $\det \nabla u = \det\nabla v = g$ a.e.\@, simplifies to 
\begin{align}\label{pyotr}
 \mathbb{D}(v) =  \mathbb{D}(u) + F(v-u) \quad \quad v \in \sca_g,
\end{align}
and note that by mean coercivity, $F(v-u)> 0$ for any $v,u \in \sca_g$ such that $v \neq u$.  It follows that $u$ is the unique global minimizer of the Dirichlet energy in $\sca_g$. 
\end{enumerate} 

See Proposition \ref{basicfact} for \eqref{IDENTITY} and Lemma \ref{diddly} for \eqref{pyotr}.
In practice, the choice of the pressure $f$ dictates the possible solutions of the Euler-Lagrange equation which, in turn, control the permissible boundary conditions $u_0\arrowvert_{\partial \om}$.  We study in Sections \ref{disk-disk}, \ref{disk-sector}, \ref{insulation_f} and \ref{sudoku_f} solutions of the Euler-Lagrange equation for four main types of pressure function $f$, and calculate in each case global energy minimizers of classes of constrained minimization problems as formulated in Steps 1.-4.\@ above.  In Section \ref{disk-disk}, for example, we prove that for suitable constants $\zeta$ and $\xi$ the map 
\begin{align*}
    u(x):=\left\{\begin{array}{l l} \zeta x & \ \ x \in B(0,\rho) \\
    \left(\xi+\frac{1-\xi}{|x|^2}\right)x & \ \ x \in B(0,1)\setminus B(0,\rho) \end{array}\right.
    \end{align*}
is the unique global minimizer of the Dirichlet energy in the class 
\begin{align*}
    \{v \in H^1_{\textrm{id}}(B(0,1);\R^2): \ \det \nabla v = g \ \textrm{a.e.}\}
\end{align*}
where $g(x):=\zeta^2$ if $x\in B(0,\rho)$ and $g(x):=\xi^2-(1-\xi)^2 |x|^{-4}$ otherwise.  Here, $B(0,\rho)$ stands as usual for the ball in $\R^2$ centered at $0$ and of radius $\rho$. 

In this and the other examples mentioned above, the maps we work with are planar, i.e. they take $\om \subset \R^2$ into $\R^2$.  We are also able to extend our analysis to the functional 
 \begin{align}\label{island3d}
     \tilde{F}(u):=\int_\om |\nabla u|^2 + T \cdot \cof \nabla u \dx,
 \end{align}
 where $u: \om \subset \R^3 \to \R^3$ and $T \in L^{\infty}(\om,\R^{3 \times 3})$ is a given matrix-valued function which, as is explained in Section \ref{3dislandexample}, causes $\tilde{F}$ to be mean coercive provided $||T||_{\infty} < 2\sqrt{3}$.  $\tilde{F}$ then has a unique global minimizer in the class 
 \begin{align}\label{a2}
    \sca_2:=\{u \in H^1(\om;\R^3): u\arrowvert_{\partial \om}=u_0\},
\end{align}
where $u_0$ is the trace of a fixed function in $H^1(\om;\R^3)$, and a process analogous to that outlined in Steps 1.-4.\@ can be followed.

One of the features of the functional $F$ in the $2\times 2$ case is that its integrand $W(x,A):=|A|^2 + f(x)\det A$ for $A \in \R^{2 \times 2}$
does not, for general $f$, satisfy a pointwise ellipticity condition of Legendre-Hadamard type
\begin{align}\label{LH} D^2W(x,A)[a \otimes b, a \otimes b] \geq \nu |a|^2|b|^2 \quad \quad A, a\otimes b \in \R^2, \ x \in \om.\end{align}
Using Hadamard's pointwise inequality $|A|^2 \geq 2 |\det A|$ for all $A \in \R^{2\times 2}$, it is straightforward to see that such a condition holds only if $||f-(f)_{\om}||_{\infty} < 2$, where $(f)_{_{\om}}$ denotes the mean value of $f$ over $\om$.  Nevertheless, even when \eqref{LH} fails it is still possible to show that the mean coercivity of $F$ is sufficient to improve the regularity of $H^1$ solutions of the associated Euler-Lagrange equation to $C^{0,\alpha}$ for some $\alpha >0$. See Proposition \ref{continuity} for details and its preamble for a discussion of this result in relation to those of Morrey \cite[Theorem 4.3.1]{Mo66} and Giaquinta and Giusti \cite{GiaquintaGiusti1982}.  Thus when, for example, we take $f(x)=M \chi_{_{\omega}}(x)$, where $\chi_{_{\omega}}$ is the characteristic function of the fixed subdomain $\omega \subset \om$ and where the scalar $M$ obeys $|M|<4$, the solution to the Euler-Lagrange equation \eqref{ELintro} is H\"{o}lder continuous, despite the evident discontinuity in $f$. In fact, in this and other such cases, the Euler-Lagrange equation splits into a `bulk part', leading to the conclusion that the solution $u$ is harmonic away from $\partial \omega$, and a surface part, where certain jump conditions relating the normal and tangential derivatives of $u$ along $\partial \omega$ should hold.  See Proposition \ref{howtosolveEL} for this interpretation and the assumptions we make to derive it.

The paper is organised as follows. In Section \ref{sectiontwo} the functional $F$ given by \eqref{Fintro} is studied under the assumption that it is mean coercive, and the properties of solutions to the associated Euler-Lagrange equations are derived, including Proposition \ref{continuity}, which guarantees the H\"{o}lder continuity mentioned above.  The important decomposition \eqref{IDENTITY} is derived in Proposition \ref{basicfact}, and a result that is the blueprint for solving the Euler-Lagrange equations appearing throughout the paper is established in Proposition \ref{howtosolveEL}.  Subsections \ref{disk-disk} and \ref{disk-sector} focus on two cases in which the pressure $f$ is of the form $f=M\chi_{\omega}$ and $\omega$ is either a disk or a sector.  Section \ref{section3} focusses on the constrained variational problems generated by taking $f$ to be of two further forms: see Section \ref{insulation_f} for a setting in which the global minimizer turns out to be piecewise affine, and Section \ref{sudoku_f} for a setting in which minimizers can be generated only if the parameters appearing in the pressure $f$ are carefully selected.  

We denote by $J$ the $2 \times 2$ matrix representing a rotation by $\pi/2$ radians anticlockwise, i.e. in terms of the canonical basis vectors $e_1$ and $e_2$ in $\R^2,$ $J = e_2 \otimes e_1 - e_1 \otimes e_2$.  Other than that, all notation is either standard or else is defined when first used. 

\section{Minimizing the functional $F$ under mean coercivity conditions}\label{sectiontwo}

The subsection title refers to the variational problem of minimizing the energy $F$ defined by
\begin{align}\label{Finhom}
    F(u):=\int_{\om} |\nabla u|^2 +f(x) \det \nabla u\dx
\end{align}
in the class of admissible maps 
\begin{align}\label{firstadmiss}
    H_{u_0}^1(\om;\R^2)=\{u \in H^1(\om;\R^2): u\arrowvert_{\partial \om}=u_0\},
\end{align}
where $u_0$ is the trace of a fixed function in $H^1(\om;\R^2)$.  Here, $f$ is a fixed function in $L^{\infty}(\om)$, which we may sometimes refer to as a `pressure', chosen so that $F$ is mean coercive, by which we mean that there is $\gamma>0$ such that 
\begin{align}\label{initialMC} F(\varphi) \geq \gamma \int_{\om} |\nabla \varphi|^2 \, \dx \quad \forall \varphi \in H_0^1(\om;\R^2).
\end{align}
Conditions on $f$ ensuring that \eqref{initialMC} holds can be found in \cite{BKV23}, to which point we will return later. By a straightforward density argument, we remark that the space $H_0^1(\om;\R^2)$ appearing in \eqref{initialMC} can be replaced with the set of smooth, compactly supported test functions on $\om$.  

The connection between mean coercivity and the existence of minimizers of $F$ is recorded in the following result. 

\begin{proposition}\label{basicfact} Let $u,v \in H_{u_0}^1(\om;\R^2)$ and let $F$ be given by \eqref{Finhom}.  Then 
\begin{align}\label{MKdecomposition}
F(v)=F(u) + \sco(u,v-u)+F(v-u) 
\end{align}
where $\sco(u,\varphi)$ represents the bilinear operator 
\begin{align}\label{ELop}
  \sco(u,\varphi):=\int_\om 2\nabla u \cdot \nabla\varphi+f(x)\, \cof\nabla u\cdot \nabla\varphi \dx.
\end{align}
If $F$ is mean coercive then it has a unique minimizer $u \in H_{u_0}^1(\om;\R^2)$ obeying the Euler-Lagrange equation 
 \begin{align}\label{ELinhombc}
  \sco(u,\varphi) = 0  \quad \quad  \forall \varphi \in H_0^1(\om;\R^2).
  \end{align}
\end{proposition}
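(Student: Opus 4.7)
The proof falls into two parts: the algebraic decomposition \eqref{MKdecomposition}, and the existence, uniqueness, and Euler--Lagrange characterization of the minimizer via the direct method.

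For the identity \eqref{MKdecomposition}, I would just expand algebraically. Setting $\varphi = v - u$ so that $v = u + \varphi$, the key observation is that both $A \mapsto |A|^2$ and $A \mapsto \det A$ are quadratic forms on $\R^{2\times 2}$, so
\begin{equation*}
|\nabla u + \nabla \varphi|^2 = |\nabla u|^2 + 2\,\nabla u \cdot \nabla \varphi + |\nabla \varphi|^2,
\end{equation*}
\begin{equation*}
\det(\nabla u + \nabla \varphi) = \det \nabla u + \cof \nabla u \cdot \nabla \varphi + \det \nabla \varphi.
\end{equation*}
Multiplying the second identity by $f(x)$, summing, integrating over $\om$, and collecting the linear-in-$\varphi$ terms into $a(u,\varphi)$ yields \eqref{MKdecomposition} with no hypotheses beyond $u,v \in H^1(\om;\R^2)$.

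For existence, I would apply the direct method. Coercivity of $F$ on $H^1_{u_0}(\om;\R^2)$ can be derived cleanly from \eqref{MKdecomposition}: fix some $u^\dagger \in H^1_{u_0}(\om;\R^2)$, write any $u \in H^1_{u_0}(\om;\R^2)$ as $u = u^\dagger + \varphi$ with $\varphi \in H^1_0(\om;\R^2)$, and obtain
\begin{equation*}
F(u) = F(u^\dagger) + a(u^\dagger, \varphi) + F(\varphi) \;\geq\; F(u^\dagger) + a(u^\dagger,\varphi) + \gamma \int_\om |\nabla \varphi|^2\dx
\end{equation*}
by mean coercivity. Since $|a(u^\dagger,\varphi)| \leq C\|u^\dagger\|_{H^1}\|\nabla \varphi\|_{L^2}$, Young's inequality absorbs the linear term into $\tfrac{\gamma}{2}\|\nabla \varphi\|_{L^2}^2$ and, after adding Poincar\'e applied to $\varphi$, yields $F(u) \geq c_1 \|u\|_{H^1}^2 - c_2$. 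The same decomposition handles weak lower semicontinuity, which is the step that is \emph{not} obvious because the integrand $W(x,A)=|A|^2 + f(x)\det A$ need not be quasiconvex. If $u_n \rightharpoonup u^\ast$ in $H^1$, then $\varphi_n := u_n - u^\ast \rightharpoonup 0$ in $H^1_0$, so
\begin{equation*}
F(u_n) = F(u^\ast) + a(u^\ast,\varphi_n) + F(\varphi_n) \;\geq\; F(u^\ast) + a(u^\ast,\varphi_n),
\end{equation*}
where in the last step I use $F(\varphi_n) \geq 0$ from mean coercivity. Since $a(u^\ast,\cdot)$ is a bounded linear functional on $H^1_0(\om;\R^2)$, $a(u^\ast,\varphi_n) \to 0$, giving $\liminf_n F(u_n) \geq F(u^\ast)$. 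This is the step I expect to be the main conceptual obstacle --- the usual routes to weak lower semicontinuity fail for non-(quasi)convex integrands, but the decomposition sidesteps this entirely. Combined with coercivity, the direct method produces a minimizer $u$.

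Finally, for uniqueness and the Euler--Lagrange equation: from \eqref{MKdecomposition} applied to $v = u + t\varphi$ with $\varphi \in H^1_0(\om;\R^2)$ and $t \in \R$,
\begin{equation*}
F(u + t\varphi) = F(u) + t\, a(u,\varphi) + t^2 F(\varphi),
\end{equation*}
so if $u$ minimizes $F$ in $H^1_{u_0}(\om;\R^2)$, the coefficient of $t$ must vanish, giving \eqref{ELinhombc}. For uniqueness, suppose $u_1,u_2$ are both minimizers. Since $u_2 - u_1 \in H^1_0(\om;\R^2)$, the Euler--Lagrange equation for $u_1$ gives $a(u_1, u_2 - u_1) = 0$, and \eqref{MKdecomposition} reduces to $F(u_2) - F(u_1) = F(u_2 - u_1) \geq \gamma \|\nabla(u_2 - u_1)\|_{L^2}^2$. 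Minimality forces the left side to be non-positive, hence $\nabla(u_2 - u_1) = 0$ a.e., and the zero boundary values of $u_2 - u_1$ then give $u_1 = u_2$.
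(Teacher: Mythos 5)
Your proof is correct and follows the same basic strategy as the paper's: establish the algebraic identity \eqref{MKdecomposition} by expanding the quadratic integrand, appeal to the direct method for existence, take variations for the Euler--Lagrange equation, and combine the decomposition with mean coercivity for uniqueness. The paper's proof is considerably terser, compressing existence to the single clause ``the direct method of the Calculus of Variations yields a minimizer.'' What you add, and what is genuinely valuable, is an explicit treatment of \emph{why} the direct method applies: coercivity and weak lower semicontinuity are both non-routine here because the integrand $W(x,A)=|A|^2+f(x)\det A$ fails the usual pointwise bounds once $\|f\|_\infty\geq 2$, so the standard growth-plus-polyconvexity machinery does not directly give lower semicontinuity. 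Your observation that both properties drop out of \eqref{MKdecomposition} combined with mean coercivity --- writing $F(u_n)=F(u^\ast)+a(u^\ast,\varphi_n)+F(\varphi_n)$ with $\varphi_n\rightharpoonup 0$, discarding $F(\varphi_n)\geq 0$, and sending the linear term to zero --- is exactly the right argument and is the main conceptual content that the paper leaves implicit. Your uniqueness argument invokes the Euler--Lagrange equation for one of the two minimizers and then reads off $F(u_2)-F(u_1)=F(u_2-u_1)\geq \gamma\|\nabla(u_2-u_1)\|_{L^2}^2$; the paper instead applies the decomposition symmetrically to both minimizers and adds the two resulting inequalities. These are the same argument up to presentation, since the symmetric form also uses stationarity (implicitly) to kill the $a(\cdot,\cdot)$ terms. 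One small point worth making explicit if you wanted a fully self-contained write-up: for the coercivity estimate $|a(u^\dagger,\varphi)|\leq C\|u^\dagger\|_{H^1}\|\nabla\varphi\|_{L^2}$ you are tacitly using that $|\cof A|=|A|$ for $A\in\R^{2\times 2}$, which is what controls the cofactor term by $\|f\|_\infty\|\nabla u^\dagger\|_{L^2}\|\nabla\varphi\|_{L^2}$.
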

\begin{proof} Writing $v=u+\varphi$, expanding the determinant 
$$\det (\nabla u+\nabla \varphi) = \det \nabla u + \cof \nabla u \cdot \nabla \varphi + \det \nabla \varphi$$
and substituting in $F(u+\varphi)$ yields the decomposition \eqref{MKdecomposition}.  When $F$ is mean coercive, the direct method of the Calculus of Variations yields a minimizer $u$, say, in $H_{u_0}^1(\om;\R^2)$, and by taking suitable variations, it must be that $u$ obeys \eqref{ELop}.  The uniqueness follows by applying \eqref{MKdecomposition} and \eqref{initialMC} to deduce that for any other candidate minimizer $v$, say, 
\begin{align*}
    F(v) & \geq F(u) + \gamma \int_{\om} |\nabla \varphi|^2 \, dx \end{align*}
and, by exchanging $u$ and $v$,    
\begin{align*} 
    F(u) & \geq F(v) + \gamma \int_{\om} |\nabla \varphi|^2 \, dx. 
\end{align*}
These are consistent only if $\varphi=0$ a.e., which gives $v=u$ a.e..
\end{proof}

We remark that the decomposition \eqref{MKdecomposition}
shows that if there is just one test function $\varphi$ such that $F(\varphi)<0$
then $F(u+k\varphi) \to -\infty$ as $k \to \infty$, and there is no infimum, let alone a minimizer. Hence if there is a finite infimum, it is necessary that 
\begin{align}\label{bepositive} F(\varphi) \geq 0 \quad \forall \varphi \in H_0^1(\om;\R^2).\end{align}
Mean coercivity is therefore a natural strengthening of this necessary condition.  Moreover, since it follows easily from \eqref{bepositive} that 
$$ \min\{F(\varphi): \  \varphi \in H_0^1(\om;\R^2)\}=0,$$
we deduce that if $F$ is in addition mean coercive then the unique minimizer of $F$ on $H_0^1(\om;\R^2)$ is $u=0$. We refer to \cite{CKKK} for other applications of convex 
integral functionals defined by possibly nonconvex integrands.

We now study the Euler-Lagrange equation \eqref{ELinhombc} for general $f$ in $L^{\infty}(\om)$ under the assumption that $f$ can be chosen so that $F$ is mean coercive, i.e. that \eqref{initialMC} holds.   This is a weaker assumption than ellipticity, as can be seen by considering the particular example of $f=M \chi_{\omega}$ where $\omega \subset \om$: 
 the system \eqref{ELinhombc} is elliptic only when $|M|<2$, whereas, by \cite[Proposition 3.4]{BKV23} it is mean coercive only when $|M|<4$.  Fortunately, classical regularity theory is readily adapted in order to exploit the mean coercivity condition \eqref{initialMC}.  Indeed, the conclusion of Proposition \ref{continuity} below echoes that of Giaquinta and Giusti \cite{GiaquintaGiusti1982}, in which an improvement in the regularity of a minimizer of certain nondifferentiable functionals is shown to be possible, and also that of the well-known result of Morrey \cite[Theorem 4.3.1]{Mo66}, but, in our case, without any pointwise growth assumptions on the integrand.  Specifically, we show that weak solutions to the Euler-Lagrange equation belong to the space $W^{1,p}_{\textrm{loc}}(\om;\R^2)$ for some $p>2$, and hence are, by Sobolev embedding, automatically locally H\"{o}lder continuous in $\Omega$. In the following,  we use the notation $(u)_{_S}:=\dashint_{S}u(y) \dy$ whenever $S \subset \om$ is measurable and non-null. 

\begin{proposition}\label{continuity} Let $u \in H_{u_0}^1(\om;\R^2)$ be a weak solution of the Euler-Lagrange equation
\begin{align}\label{ELpartic}
    \int_{\om}2\nabla u \cdot \nabla \varphi + f\, \cof\nabla u\cdot \nabla\varphi \dx = 0 \quad \quad \varphi \in W^{1,2}_0(\om,\R^2)
\end{align}
and assume that $F$ is mean coercive in the sense of \eqref{initialMC}.  Then there is $p>2$ such that $u$ belongs to $W^{1,p}_{\textrm{loc}}(\om;\R^2)$.
    \end{proposition}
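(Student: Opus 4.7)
The plan is the classical three-step chain Caccioppoli $\Rightarrow$ reverse H\"older $\Rightarrow$ Gehring's lemma, adapted to let the integral condition \eqref{initialMC} substitute for the (absent) pointwise ellipticity.

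The first step is where mean coercivity enters. Fix $B_R\Subset\om$, choose $c\in\R^2$, set $\tilde u:=u-c$, and let $\eta\in C_c^\infty(B_R)$ be a standard cutoff with $\eta\equiv 1$ on $B_{R/2}$ and $|\nabla\eta|\le C/R$. Since the bilinear form $\sco$ vanishes on constant first arguments, $\tilde u$ inherits the Euler-Lagrange identity $\sco(\tilde u,\varphi)=0$ for all $\varphi\in H_0^1(\om;\R^2)$. Moreover, $\sco$ is symmetric (in $2\times 2$, $\cof A\cdot B=\cof B\cdot A$), so upon expanding via the product rule $\nabla(\eta\tilde u)=\eta\nabla\tilde u+\tilde u\otimes\nabla\eta$, the identity $\sco(\tilde u,\eta^2\tilde u)=0$ cancels the ``interior'' and ``cross'' contributions in the expansion of $\sco(\eta\tilde u,\eta\tilde u)$ and leaves
\[
\sco(\eta\tilde u,\eta\tilde u)=\int_\om\Bigl(2|\tilde u\otimes\nabla\eta|^2+2f(x)\det(\tilde u\otimes\nabla\eta)\Bigr)\dx.
\]
The decisive observation is that $\tilde u\otimes\nabla\eta$ is rank-one, so $\det(\tilde u\otimes\nabla\eta)\equiv 0$ pointwise and the $f$-dependent term disappears altogether. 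Mean coercivity, which reads $\sco(\varphi,\varphi)\ge 2\gamma\int|\nabla\varphi|^2\dx$ for $\varphi\in H_0^1(\om;\R^2)$, applied to $\eta\tilde u$, then delivers
\[
\gamma\int_\om|\nabla(\eta\tilde u)|^2\dx\le \int_\om|\tilde u|^2|\nabla\eta|^2\dx,
\]
and choosing $c=(u)_{B_R}$ produces the Caccioppoli inequality $\int_{B_{R/2}}|\nabla u|^2\dx\le \frac{C}{\gamma R^2}\int_{B_R}|u-(u)_{B_R}|^2\dx$.

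The remaining steps are standard. The planar Sobolev-Poincar\'e inequality with sharp exponent $1$,
\[
\Bigl(\dashint_{B_R}|u-(u)_{B_R}|^2\dx\Bigr)^{1/2}\le CR\dashint_{B_R}|\nabla u|\dx,
\]
combines with the Caccioppoli estimate to give the reverse H\"older inequality
\[
\Bigl(\dashint_{B_{R/2}}|\nabla u|^2\dx\Bigr)^{1/2}\le \frac{C}{\sqrt\gamma}\dashint_{B_R}|\nabla u|\dx
\]
on every ball $B_R\Subset\om$, and Gehring's lemma then yields an exponent $p>2$ for which $|\nabla u|\in L^p_{\mathrm{loc}}(\om)$, which is the claimed higher integrability.

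The only non-routine ingredient is the rank-one cancellation $\det(\tilde u\otimes\nabla\eta)=0$, which is what allows the condition \eqref{initialMC} to play the role of pointwise ellipticity in an otherwise standard Caccioppoli computation. No assumption on $\|f\|_\infty$ beyond what \eqref{initialMC} already requires is used.
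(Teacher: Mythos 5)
Your argument is correct and follows essentially the same chain as the paper: test with $\eta^2(u-\lambda)$, expand $F(\eta(u-\lambda))$ (equivalently $\tfrac12\sco(\eta\tilde u,\eta\tilde u)$) using the Euler--Lagrange identity, apply mean coercivity, deduce a Caccioppoli estimate, combine it with Sobolev--Poincar\'{e} to get a reverse H\"older inequality, and invoke Gehring's lemma. The rank-one cancellation $\det(\tilde u\otimes\nabla\eta)=0$ that you spotlight is exactly what makes \eqref{trill} in the paper come out clean, though the paper uses it silently inside the expansion of $\det\nabla(\eta(u-\lambda))$; making it explicit is a genuine improvement in exposition and is the right way to see why \eqref{initialMC} can stand in for pointwise Legendre--Hadamard ellipticity here.

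The one place where the two arguments diverge slightly is in the passage from $\gamma\int|\nabla(\eta\tilde u)|^2\le\int|\tilde u|^2|\nabla\eta|^2$ to the Caccioppoli estimate. You absorb the cross term $2\eta\,\nabla u\cdot(\tilde u\otimes\nabla\eta)$ fully into $\tfrac{\gamma}{2}\int\eta^2|\nabla u|^2$ and obtain a clean Caccioppoli inequality, hence a reverse H\"older inequality with no error term, to which the basic form of Gehring's lemma applies. The paper instead bounds $\gamma\int_{B_R}|\nabla u|^2$ by discarding the annular part of $\gamma\int\eta^2|\nabla u|^2$ before applying Young, which leaves a residual $\theta\int_{B_{2R}\setminus B_R}|\nabla u|^2$ on the right (with $4\theta<\gamma$), and therefore arrives at the perturbed reverse H\"older form \eqref{RVPC}; it then appeals to the version of Gehring's lemma in \cite[Prop.~1.1, Ch.~V]{Giaquinta} that accommodates the extra small term. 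Both routes are valid, and yours is marginally tidier; the cited proposition covers your cleaner inequality as a special case, so no new citation is needed.
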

\begin{proof} Let $x_0$ be any interior point of $\om$ and let $R_0>0$ be such that $B(x_0,2R)\subset \om$ for all $R \in (0,R_0)$.  Fix $R \in (0,R_0)$ and let $\eta$ be a smooth cut-off function with the properties that $\eta(x)=1$ for $x \in B(x_0,R)$, $\textrm{spt}\, \eta \subset B(x_0,2R)$ and $|\nabla \eta| \leq c/R$ for some constant $c$.  Let $\lambda$ be a constant vector in $\R^2$.  
Choosing $\varphi=\eta^2(u-\lambda)$ in \eqref{ELpartic} gives
\begin{align}\label{one}
    0 = &  \int_{\om} \eta^2 |\nabla u|^2 + \eta^2 f \,\det\nabla u \,\dx + \\ \nonumber & \qquad \quad \quad  + \int_{\om} 2 \eta (u-\lambda) \otimes \nabla \eta \cdot \nabla u + f  \eta (u-\lambda) \otimes \nabla \eta \cdot \cof \nabla u \, \dx. 
\end{align}
Now, \begin{align}\label{two}
    F(\eta(u-\lambda)) & = \int_{\om} \eta^2 |\nabla u|^2 + 2\eta(u-\lambda) \otimes \nabla \eta \cdot \nabla u + |u-\lambda|^2 |\nabla \eta|^2 \,\dx+ \\ \nonumber & \quad \quad + \int_{\om} \eta^2 f \, \det \nabla u + f \eta  (u-\lambda) \otimes \nabla \eta \cdot \cof \nabla u \, \dx,
\end{align}
which by applying \eqref{one} leads to 
\begin{align}\label{trill}
   F(\eta(u-\lambda)) = \int_{\om} |u-\lambda|^2 |\nabla \eta|^2 \dx. 
\end{align}
Since $\eta(u-\lambda)$ belongs to $W_0^{1,2}(\om,\R^2)$, we can apply \eqref{initialMC} to the left-hand side of the last equation, which gives, for some $\gamma>0$,
\begin{align*}
    \gamma\int_{\om} \eta^2 |\nabla u|^2 + 2 \eta (u - \lambda) \otimes \nabla \eta \cdot \nabla u + |u-\lambda|^2 |\nabla \eta|^2 \, \dx \leq\int_{\om} |u-\lambda|^2 |\nabla \eta|^2 \dx. 
\end{align*}
Hence there are constants $c_1,c_2,c_3$ and $\theta$ depending only on $\gamma$ and $f$ such that 
\begin{align*}
    \gamma \int_{B(x_0,R)}|\nabla u|^2 \, \dx & \leq c_1\int_{B(x_0,2R)\setminus B(x_0,R)} |u-\lambda|^2 |\nabla \eta|^2 \, \dx + \\ & \quad \quad\quad\quad \quad \quad\quad\quad  + c_2 \int_{B(x_0,2R)\setminus B(x_0,R)}|u-\lambda||\nabla \eta||\nabla u|\, \dx \\
    & \leq \frac{c_3}{R^2}\int_{B(x_0,2R)\setminus B(x_0,R)} |u-\lambda|^2 \, \dx + \theta \int_{B(x_0,2R)\setminus B(x_0,R)}|\nabla u|^2 \, \dx.
\end{align*}
where, without loss of generality, $4 \theta < \gamma$.  
Replacing the domain of integration on the right-hand side by $B(x_0,2R)$, dividing through by $\pi R^2$, taking $\lambda = (u)_{_{B(x_0,2R)}}$ and applying the Sobolev-Poincar\'{e} inequality in the form 
\begin{align*} \int_{B(x_0,2R)} |u-(u)_{_{B(x_0,2R)}}|^q \, \dx \leq C \left(\int_{B(x_0,2R)} |\nabla u|^{\frac{nq}{n+q}}   \right)^{\frac{n+q}{n}} \end{align*}
with $n=q=2$ leads eventually to 
\begin{align}\label{RVPC}
   \dashint_{B(x_0,R)} |\nabla u|^2 \, \dx & \leq \tilde{C} \left(\dashint_{B(x_0,2R)} |\nabla u|   \right)^2 + \frac{4 \theta}{\gamma} \dashint_{B(x_0,2R)} |\nabla u|^2 \, \dx. 
\end{align}
Since $\theta':=\frac{4 \theta}{\gamma}<1$, \eqref{RVPC} is a reverse H\"{o}lder inequality and, by applying  \cite[Proposition 1.1, Chapter V]{Giaquinta} with $q=2$ and $g=|\nabla u|$, we deduce that there is $\eps>0$ such that $\nabla u \in L^p_{\textrm{loc}}(\om)$ for any $p \in [2,2+\eps)$. It follows from this and Sobolev embedding that $u \in W^{1,p}_{\textrm{loc}}(\om,\R^2)$, as claimed. 
\end{proof}

A second interesting feature of the Euler-Lagrange equations \eqref{ELinhombc} is that, thanks to Proposition \ref{continuity} and  properties of null Lagrangians, the `cofactor part' of $\sco(u,\varphi)$ reduces to a `surface' integral when $f$ is a piecewise constant function and provided $u$ is regular enough.  We illustrate this initially by means of the following result by taking $f=M\chi_{\omega}$ in \eqref{Finhom}, and later, for different pressure functions $f$, in Propositions \ref{solveELcurrent} and \ref{nec2}.

\begin{remark}\emph{The problem of minimizing $F$ in $H_{u_0}^1(\om;\R^2)$ admits a physical interpretation in terms of the stored energy of a nonlinearly elastic material that is, in parts, subject to an applied dead-load pressure.  The associated PDE \eqref{ELinhombc} gives information both in the `bulk' (via harmonicity on $\om \setminus \partial \omega$) and on the `surface' $\partial \omega $ (via jump conditions.)  Furthering the connection with nonlinear elasticity, we may rewrite $F$ in terms of the Cauchy-Green stress tensor $C:=\nabla u^T \nabla u$ and note that in our case we have existence and uniqueness of equilibria under conditions that are not covered by the general results of \cite{Neffetal}.} \end{remark}

\begin{proposition}\label{howtosolveEL}
    Let the functional $F$ be given by \eqref{Finhom} with $f=M\chi_{\omega}$, and assume that $u \in H_{u_0}^1(\om;\R^2)$ solves the Euler-Lagrange equation \eqref{ELinhombc} for $F$.  Then
    \begin{itemize}\item[(i)] $u$ is harmonic in each of $\omega$ and $\om \setminus \omega$, and
    \item[(ii)] as long as these quantities exist
    \begin{align}\label{conslawboundary}2 \partial_\nu u\arrowvert_{\omega}+2 \partial_{-\nu} u\arrowvert_{\Omega\setminus \omega} - M J \partial_{\tau} u= 0 \quad \sch^1-a.e. \textrm{on} \ \partial \omega \setminus \partial \om, \end{align}
    \end{itemize}
     where the local normal $\nu$ and tangent $\tau$ are defined $\sch^1-$almost everywhere. 
\end{proposition}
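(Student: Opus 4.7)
The key algebraic ingredient is the Piola identity, which says that each row of $\cof \nabla u$ is divergence-free in the distributional sense, provided $u$ is sufficiently regular. Proposition \ref{continuity} delivers $u \in W^{1,p}_{\mathrm{loc}}(\om;\R^2)$ for some $p>2$, which is enough to legitimise the null Lagrangian identity
\begin{equation*}
\int_D \cof \nabla u \cdot \nabla \varphi \dx = \int_{\partial D} (\cof \nabla u)\,\nu \cdot \varphi \dHone
\end{equation*}
on smooth subdomains $D \subset \om$, at least when combined with the regularity assumption tacitly imposed in (ii).

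For part (i), I would test \eqref{ELinhombc} with $\varphi \in C_c^\infty(\omega;\R^2)$. Since $f \equiv M$ on $\omega$ and $\varphi$ has compact support inside $\omega$, the null Lagrangian property gives $\int_\omega \cof \nabla u \cdot \nabla \varphi \dx = 0$, and the Euler--Lagrange equation collapses to $\int_\omega 2\nabla u \cdot \nabla \varphi \dx = 0$. Thus $u$ is weakly, hence classically, harmonic in $\omega$. The identical argument on $\om \setminus \overline{\omega}$ (where $f \equiv 0$, so the cofactor term is absent altogether) gives harmonicity there.

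For part (ii), I would take a general $\varphi \in H^1_0(\om;\R^2)$ and split \eqref{ELinhombc} as
\begin{equation*}
0 = \int_\omega \bigl(2\nabla u + M\,\cof \nabla u\bigr)\cdot \nabla \varphi \dx + \int_{\om\setminus \omega} 2\nabla u \cdot \nabla \varphi \dx.
\end{equation*}
Integrating by parts in each piece, the bulk terms vanish by (i). The outer boundary $\partial \om$ contributes nothing because $\varphi = 0$ there, while $\partial \omega \setminus \partial \om$ is reached from $\omega$ with outer normal $\nu$ and from $\om \setminus \omega$ with outer normal $-\nu$. This yields
\begin{equation*}
\int_{\partial \omega \setminus \partial \om} \bigl[\,2\partial_\nu u\restriction_{\omega} + 2\partial_{-\nu} u\restriction_{\om\setminus\omega} + M(\cof \nabla u)\,\nu\,\bigr]\cdot \varphi \dHone = 0.
\end{equation*}
Using the identity $\cof A = J A J^T$ in $\R^{2\times 2}$, together with $J^T \nu = -\tau$ for the tangent $\tau = J\nu$, one computes $(\cof \nabla u)\,\nu = -J \nabla u\,\tau = -J\,\partial_\tau u$. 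Substituting and appealing to the arbitrariness of the trace of $\varphi$ on $\partial \omega \setminus \partial \om$ delivers \eqref{conslawboundary}.

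The main obstacle is regularity: $W^{1,p}_{\mathrm{loc}}$ for some $p>2$ is not by itself enough to talk about one-sided normal derivatives on $\partial \omega$, which is exactly why the statement qualifies (ii) with ``as long as these quantities exist.'' In practice, the applications in Sections \ref{disk-disk}--\ref{sudoku_f} produce solutions that are piecewise smooth across $\partial \omega$, for which the one-sided traces $\partial_\nu u\restriction_\omega$, $\partial_{-\nu} u\restriction_{\om\setminus\omega}$ and $\partial_\tau u$ are well-defined $\sch^1$-a.e., and the integration by parts argument above is rigorous.
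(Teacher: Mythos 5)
Your proof is correct and follows essentially the same route as the paper's: apply Piola's identity / the null Lagrangian structure of $\cof \nabla u$ to reduce the cofactor term to a surface integral on $\partial\omega$, deduce harmonicity in each region by testing with compactly supported $\varphi$, and then integrate by parts in the two regions and use $\cof A = J^T A J$ (equivalently $JAJ^T$, since $J^T=-J$) together with $J\nu = \tau$ to obtain the jump condition. The only cosmetic differences are that you invoke Proposition \ref{continuity} explicitly and you integrate the cofactor term by parts together with the bulk terms in (ii) rather than converting it to a surface integral first, but these are presentational variants of the same argument.
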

\begin{proof}
     By a density argument, we may assume that $u$ solves $\sco(u,\varphi)=0$ for all $\varphi \in C_c^\infty(\om;\R^2)$.
     Using Piola's identity $ \Div\cof \nabla u=0$, we see that 
     \begin{align*}
         \int_\omega \cof \nabla u \cdot \nabla \varphi \dx = \int_{\partial \omega} \varphi \cdot \cof \nabla u \, \nu \dHone,
     \end{align*}
 and since $\cof A=J^T A J$ for any $2\times 2$ matrix $A$ and $J\nu =\tau$ in local coordinates on $\partial \omega$, we can write $\cof \nabla u \, \nu = - J \partial_\tau u$.  Hence the second term in $\sco(u,\varphi)$ obeys
 \begin{align}\label{surface}
     \int_\om f(x)\,\cof \nabla u \cdot \nabla \varphi \dx = - M \int_{\partial \omega} \varphi \cdot J \partial_\tau u \dHone, 
 \end{align}
 and the Euler-Lagrange equation reads
 \begin{align}
     \int_\om \nabla u \cdot \nabla \varphi \dx  - M \int_{\partial \omega} \varphi \cdot J \partial_\tau u \dHone = 0  \quad \quad  \varphi \in C_c^\infty(\om,\R^2).
     \end{align}
By choosing test functions $\varphi$ first with support only in $\omega$, and then with support only in $\om \setminus \omega$, the surface term involving $\partial_\tau u$ vanishes, and 
 it follows by  Weyl's lemma and standard theory that $u$ is harmonic in each of $\omega$ and its complement in $\om$.  Hence part (i) of the proposition.

 To prove (ii), use the harmonicity of $u$ in $\omega$ and then in $\om \setminus \omega$ to rewrite, for a general test function $\varphi$,
 \begin{align*}
     \int_\om \nabla u \cdot \nabla \varphi \dx & = \int_\omega\Div (\nabla u^T \varphi) \dx + \int_{\om \setminus \omega} \Div (\nabla u^T \varphi) \dx \\
     & = \int_{\partial \omega} \varphi \cdot \partial_\nu u\arrowvert_{\omega} \dHone + \int_{\partial \omega} \varphi \cdot \partial_{-\nu} u\arrowvert_{\Omega\setminus\omega} \dHone, 
     \end{align*}
     and combine with \eqref{surface} to obtain
     \begin{align*}
         \int_{\partial \omega} \left(2 \partial_\nu u\arrowvert_{\omega}+2 \partial_{-\nu} u\arrowvert_{\Omega\setminus \omega} - M J \partial_{\tau}u\right) \cdot \varphi  \dHone =0.
     \end{align*}
   Since $\varphi\arrowvert_{\partial \omega}$ is free other than on that part of $\partial \omega$ which meets $\partial \Omega$,  (ii) follows.   
 \end{proof}

In some special cases, using Proposition \ref{howtosolveEL} it is possible to solve the Euler-Lagrange equation \eqref{ELinhombc} explicitly.  

\subsection{The case that $\omega$ is a subdisk of the the unit ball in $\R^2$.} \label{disk-disk}
Let $\Omega=B(0,1)$ and, for a fixed $\rho  \in (0,1)$, let $\omega=B(0,\rho)$, and suppose that the boundary condition imposed on $\partial \om$ is $u_0(x)=x$.  \\
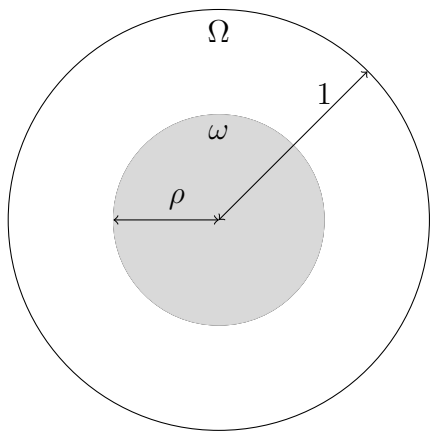
\begin{figure}[H]
\centering
\begin{minipage}{1\textwidth}
\centering
\begin{tikzpicture}[scale=1.4]
\node (M) at ( 0.0, 0.0) {}; 
\draw (M.center) circle (2.0cm);
\draw (M.center) circle (1.0cm);
\draw[gray!30!,fill] (0,0) circle(1.0cm);
\node[below] (o) at ( 0.0, 1.0) {$\omega$}; 
\node[below] (O) at ( 0.0, 2.0) {$\Omega$}; 
\draw[<->]        (0,0)   -- (-1.0,0);
\node[above] (M) at ( -0.4, 0.0) {$\rho$}; 
\draw[<->]        (0,0)   -- (1.41,1.41);
\node[above] (M) at ( 1.0, 1.0) {$1$}; 
\end{tikzpicture}
\end{minipage}
\caption{Illustration of the disk-disk problem for $\rho=0.5$.}
\label{problem:disk-disk}
\end{figure}
Then, by applying Proposition \ref{howtosolveEL}, we calculate that the function
\begin{align}
    u(x):=\left\{\begin{array}{l l} \zeta x & \ \ x \in \omega \\
    \left(\xi+\frac{1-\xi}{|x|^2}\right)x & \ \ x \in \om\setminus\omega \end{array}\right.\label{diskdisksol}
\end{align}
obeys conditions (i) and (ii) of Proposition \ref{howtosolveEL} provided
\begin{align*}
    \zeta:=\frac{4}{4+M-M\rho^2}, && \xi:=\frac{4+M}{4+M-M\rho^2}.
\end{align*}

In the course of the calculation above we made use of Proposition \ref{continuity} to require that the solution is, in particular, continuous across $\partial \omega$.  In order to satisfy the mean coercivity hypothesis of Proposition \ref{continuity}, it is sufficient to assume that $|M| < 4$, as we show in Lemma \ref{mcisland} below.  Before that, we remark that the solution $u$ given by \eqref{diskdisksol} is valid for all $M>0$, not just those that through an application of Lemma \ref{mcisland} render $F$ mean coercive.  Presumably in these `large M' cases $u$ is a continuous stationary point of $F$ but is not a minimizer.

\begin{lemma}\label{mcisland}
  The functional 
$$F(\varphi)=\int_{\om}|\nabla \varphi|^2 + M \chi_{\omega} \det \nabla \varphi \, \dx$$
is mean coercive on $H_0^1(\om;\R^2)$ if $|M| < 4$.    
\end{lemma}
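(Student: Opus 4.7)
The plan is to exploit the null Lagrangian structure of $\det\nabla\varphi$---so that $\int_\omega\det\nabla\varphi\,\dx$ is a boundary integral on $\partial\omega$---and then to reduce mean coercivity to a Fourier-mode-by-mode positivity check on the circle $\partial B(0,\rho)$. A first reduction observes that, for fixed trace on $\partial\omega$, the harmonic extensions into $\omega$ and into $\om\setminus\omega$ (with zero trace on $\partial\om$) minimize the Dirichlet energy while leaving $\int_\omega\det\nabla\varphi$ invariant, the latter justified in $H^1$ by Stokes' theorem applied to smooth approximations. It therefore suffices to prove the inequality for such piecewise-harmonic replacements.

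I would then expand the common trace on $\partial B(0,\rho)$ in real Fourier series $\varphi(\rho,\theta)=(a(\theta),b(\theta))$, with $a=a_0+\sum_{k\geq 1}(c_k^a\cos k\theta+s_k^a\sin k\theta)$ and similarly for $b$. The standard power-series formula on the disk and the $r^k$, $r^{-k}$ combination on the annulus (matched to zero at $r=1$) give, after orthogonality,
\[ \int_\om|\nabla\varphi|^2=\pi\sum_{k\geq 1}k\bigl(1+\coth(k\alpha)\bigr)\bigl((c_k^a)^2+(s_k^a)^2+(c_k^b)^2+(s_k^b)^2\bigr)+(\text{zero-mode terms}), \]
where $\alpha:=\log(1/\rho)>0$. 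Writing $\det\nabla\varphi=\Div(\varphi_1 J^T\nabla\varphi_2)$ and integrating by parts gives $\int_\omega\det\nabla\varphi=\int_0^{2\pi}a\,\partial_\theta b\,\md\theta=\pi\sum_{k\geq 1}k(c_k^a s_k^b-s_k^a c_k^b)$. Hence $F$ decomposes as an orthogonal sum over modes; for each $k\geq 1$ the quadratic form in $(c_k^a,s_k^a,c_k^b,s_k^b)$ splits into two identical $2\times 2$ blocks whose eigenvalues are $(1+\coth(k\alpha))\pm M/2$. Because $\coth(x)>1$ for all $x>0$, each eigenvalue is bounded below by $2-|M|/2$, strictly positive when $|M|<4$; dividing by the Dirichlet-energy coefficient $k(1+\coth(k\alpha))$ and using the monotonicity of $\coth$ yields a uniform coercivity constant $\gamma\geq(4-|M|)/[2(1+\coth(\alpha))]$.

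The main technical obstacle will be the bookkeeping in the Fourier computation, especially justifying the null-Lagrangian reduction for rough $\varphi\in H_0^1(\om;\R^2)$ and handling the $k=0$ mode separately (where the annular extension involves $\log r$ rather than $\coth$, and contributes nothing to the determinant term). This route also exposes the sharpness of the bound: the threshold $|M|=4$ corresponds exactly to the limit $\coth(k\alpha)\to 1$ as $k\to\infty$, in line with \cite[Proposition 3.4]{BKV23}.
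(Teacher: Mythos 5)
Your proposal is correct (modulo the routine bookkeeping you flag), but it is a genuinely different route from the paper's. The paper's proof is a two-line reduction: write $F(\varphi)=\eps\D(\varphi)+(1-\eps)\int_{\om}|\nabla\varphi|^2+\tfrac{M}{1-\eps}\chi_{\omega}\det\nabla\varphi\,\dx$, observe that the second term is nonnegative by \cite[Proposition 3.4]{BKV23} provided $|M|/(1-\eps)\leq 4$, and choose $\eps$ small. So the paper outsources the key nonnegativity to a cited result and uses the $\eps$-borrowing trick to upgrade nonnegativity to coercivity. You instead give a self-contained, explicit argument: reduce to piecewise-harmonic competitors, diagonalize mode-by-mode via Fourier series on $\partial B(0,\rho)$, and exhibit the eigenvalues $(1+\coth(k\alpha))\pm M/2$. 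This is more illuminating---it exposes the sharpness of $|M|=4$ as $\coth(k\alpha)\to 1$ and produces an explicit coercivity constant---but it is tied to the concentric-disk geometry of Section \ref{disk-disk}. That matters, because Lemma \ref{mcisland} is later invoked (via the discussion after Theorem \ref{minFgivesminD} and in Proposition \ref{tango}) for $\omega$ a \emph{sector}, where $\partial\omega$ is not a circle and your separation-of-variables scheme does not apply. To recover the full strength of the lemma you would either have to redo the spectral argument in the sector geometry or fall back on \cite[Proposition 3.4]{BKV23}, which is what the paper does. As a small computational remark, your bound $\gamma\geq(4-|M|)/[2(1+\coth\alpha)]$ can be improved to $\gamma\geq(4-|M|)/4$, since the per-mode ratio $1-\tfrac{|M|/2}{1+\coth(k\alpha)}$ is bounded below by $1-|M|/4$ uniformly in $k$ (using $\coth(k\alpha)>1$ directly rather than $\coth(k\alpha)\leq\coth\alpha$, which goes the wrong way for your purposes).
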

\begin{proof}
   Let $|M|<4$ and write
    \begin{align*}
        F(\varphi) & = \eps\int_{\om}|\nabla \varphi|^2 \, \dx + (1-\eps)\int_{\om}|\nabla \varphi|^2 + \frac{M}{1-\eps} \chi_{\omega} \det \nabla \varphi \, \dx, 
    \end{align*}
where, by \cite[Proposition 3.4]{BKV23}, the integral functional with prefactor $1-\eps$ is nonnegative on $H_0^1(\om;\R^2)$ if and only if $|\frac{M}{1-\eps}|\leq 4$. Given that $|M|<4$, this condition is easily satisfied by choosing $\eps>0$ sufficiently small. Hence $F$ is mean coercive. 
\end{proof}
\begin{remark}\label{munktell}\emph{Example \eqref{diskdisksol} illustrates a number of points, inlcuding that:
\begin{itemize}\item[(a)] the solution $u$ is not $C^1$, and nor could it be since it would then necessarily be harmonic throughout $\om$, and hence, in view of the boundary conditions, equal to the identity throughout the domain, in clear violation of condition (ii) of Proposition \ref{howtosolveEL}, and
\item[(b)] the Jacobian $\det \nabla u$ is radial, discontinuous and obeys 
\begin{displaymath}
    \det \nabla u(x)=\left\{\begin{array}{l l} \zeta^2  & \ \ x \in \omega \\
    \xi^2-(1-\xi)^2 |x|^{-4} & \ \ x \in \om\setminus\omega \end{array}\right.
\end{displaymath}
In particular, $\det \nabla u$ jumps `up' as $\partial \omega$ is crossed from inside to out by an amount 
\begin{align}\label{jumpCL}
    \frac{8M}{(M\rho^2-M-4)^2}, 
\end{align}
presumably reflecting the fact that, when minimizing the energy $F$ defined in \eqref{Finhom}, it is better to have a smaller Jacobian in regions where the term $M\chi_{\omega} \det \nabla u$ is `active' and $M>0$. \end{itemize}}  
\end{remark}
By inspection, we deduce from \eqref{jumpCL} that the jump in $\det \nabla u$ across $\partial \omega$ is of size $\frac{M}{2}|\partial_{\tau} u|^2$,
which, as we will now see, is not a coincidence provided we make certain assumptions about the normal and tangential derivatives of $u$ on $\partial \omega$.  A priori, we do not even know whether the functions $\partial_\nu u\arrowvert_{\omega}$, $\partial_\nu u\arrowvert_{\om\setminus \omega}$ and $\partial_{\tau}u$ exist pointwise on the (1-dimensional) set $\partial \om$.  But for the purposes of the following formal argument, let us assume that $u$  obeys
\begin{align}\label{strong}  2\partial_\nu u\arrowvert_{\omega}+ 2 \partial_{-\nu} u\arrowvert_{\Omega\setminus \omega} - M J \partial_{\tau}u = 0 \quad \sch^1-a.e. \ \textrm{on} \ \partial \omega \setminus \partial \om\end{align}
and also that 
\begin{align}\label{shift1}\det \nabla u\arrowvert_{\omega}(x) & = \partial_{\tau}u(x)  \cdot J \partial_{\nu}u\arrowvert_{\omega}(x) \quad \mathrm{and}, \\
\label{shift2}\det \nabla u\arrowvert_{\om \setminus \omega}(x) & = \partial_{\tau}u(x) \cdot J \partial_{\nu}u\arrowvert_{\om \setminus \omega}(x),
\end{align} 
except possibly for an $\sch^1-$null subset of $\partial \omega$.  The origin of \eqref{shift1} and \eqref{shift2} lies in the identity $\det \nabla u = \partial_{\tau}u \cdot J \partial_{\nu}u$, which holds a.e.\@ with respect to 2-dimensional Lebesgue measure.  The strengthening we assume is that this holds $\sch^1-$a.e.\@ on $\partial\omega$. Under the circumstances just outlined, we claim that for $\sch^1-$a.e. $x \in \partial \omega$ it holds that 
\begin{align}\label{CL}
   \det \nabla u\arrowvert_{\om \setminus \omega}(x) - \det \nabla u\arrowvert_{\omega}(x) = \frac{M}{2}|\partial_{\tau}u(x)|^2.
\end{align}
This is easily proved:  apply $J$ to both sides of \eqref{strong} and recall that $J^2=-\1$ to obtain for $\sch^1-$a.e. $x$ in $\partial \omega$
\begin{align*}
    J \partial_\nu u\arrowvert_{\omega} - J \partial_{\nu} u\arrowvert_{\Omega\setminus \omega} + \frac{M}{2}  \partial_{\tau} u= 0.
\end{align*}
Taking the inner product of both sides with $\partial_{\tau}u$, applying \eqref{shift1}
and \eqref{shift2}, and then rearranging slightly gives
 \eqref{CL}.

\begin{remark}\emph{
We can further infer from Remark \ref{munktell} (b) that the abrupt change in the Jacobian is `uniformly spread' around the smooth set $\partial \omega$.  This is in contrast with cases in which the subdomain $\omega$ has `sharp corners', where numerical evidence suggests that the greatest jumps in the Jacobian occur non-uniformly.  See Section \ref{diskdisk} for the latter, and the discussion following \eqref{beeth5} for an analytic example.}
\end{remark}

\subsection{The case that $\omega$ is a sector of the unit disk in $\R^2$.}
\label{disk-sector}
Let $\omega$ be the sector of the unit disk $B$ defined by $|\theta| \leq \pi/4$ in plane polar coordinates a shown in Figure \ref{problem:disk-section}.
\begin{figure}[H]
\centering
\begin{minipage}{1\textwidth}
\centering
\begin{tikzpicture}[scale=1.4]
\node (M) at ( 0.0, 0.0) {}; 
\node[] (o) at ( 1.0, 0.0) {$\omega$}; 
\node[below] (O) at ( 0.0, 2.0) {$B$}; 
\draw[-]        (0,0)   -- (1.41,1.41);
\draw[-]        (0,0)   -- (1.41,-1.41);
\draw[gray!30!,fill] (0:0) -- (1.41,1.41) arc (45:-45:2) -- cycle;
        \node[] (o) at ( 1.0, 0.0) {$\omega$}; 
        \draw (M.center) circle (2.0cm);
\node[below] (O) at ( 0.0, 2.0) {}; 
\end{tikzpicture}
\end{minipage}
\caption{Illustration of the disk-sector problem.  Here, $B$ is the unit disk in $\R^2$.}
\label{problem:disk-section}
\end{figure}
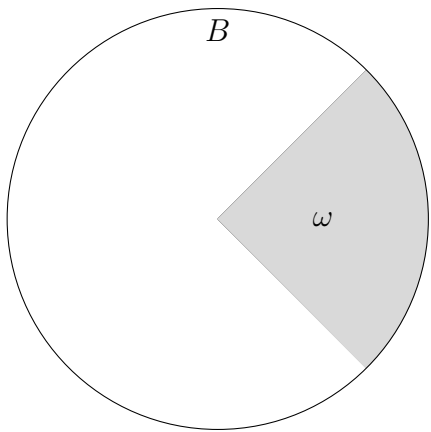
Then a concrete solution to the Euler-Lagrange equation as set out in Proposition \ref{howtosolveEL} is:
\begin{align}
    u(R,\theta)& = \left\{\begin{array}{l l} u^s(R,\theta) & \mathrm{in} \ \omega, \\
    u^p(R,\theta) & x \in B \setminus\omega,
\end{array}\right. \label{example1}
\end{align}
where 
\begin{align}\label{beeth5}
u^s(R,\theta) = \left(\begin{array}{c} 1 \\ R^2 \sin(2\theta) 
    \end{array}\right)
    \quad \text{and} \quad
    u^p(R,\theta) = \left(\begin{array}{c} 1 +\frac{M}{2}R^2\cos(2\theta)\\ R^2 \sin(2\theta) 
    \end{array}\right).
    \end{align}
The form of this solution is taken from Proposition \ref{tango} below.  We remark that since the normal and tangential derivatives clearly exist along $\partial \omega$, with the possible exception of the origin, the argument leading to \eqref{CL} is valid, and hence the jump in the Jacobian $\det\nabla u$ across $\partial \omega$ is given by 
$$\frac{M}{2}|\partial_{\tau}u|^2 = 2R^2,$$
which, we note, is maximal as $\partial B$ is approached.

The solution in \eqref{example1}
is a particular case of the following general form of solution that applies to boundary data $u_0$ in $H^{-\frac{1}{2}}(\partial B,\R^2)$ such that  
\begin{enumerate}
\item[(a)] 
$u_0$ obeys the symmetry condition 
\begin{align}\label{bcsymm}
    u(x) = E u_0(Ex) \quad x \in \partial B,
\end{align}
where $E$ is the $2 \times 2$ matrix 
\begin{align}\label{E}
    E = \left(\begin{array}{c c} 1 & 0 \\ 0 & -1\end{array}\right).\end{align}
\item[(b)] in terms of plane polar coordinates on $\partial \omega$, $u_0$ has a development of the type 
\begin{align*}
u_0(1,\theta) =\left\{ \begin{array}{ll}
    \left( \begin{array}{c} \sum_{k \geq 0} A_{4k}\cos(4k\theta)+A_{4k+2}\cos((4k+2)\theta) \\
   \sum_{k \geq 0} B_{4k}\sin(4k\theta)+B_{4k+2}\sin((4k+2)\theta)
    \end{array} \right) & \mathrm{if} \ (1,\theta) \in \partial \omega \\
 \left(\begin{array}{c} \sum_{k \geq 0} A_{4k}\cos(4k\theta)+\left(A_{4k+2}+\frac{M}{2}B_{4k+2}\right)\cos((4k+2)\theta) \\
   \sum_{k \geq 0} \left(B_{4k}+\frac{M}{2}A_{4k}\right)\sin(4k\theta)+B_{4k+2}\sin((4k+2)\theta)
   \end{array}\right) & \mathrm{if}\  (1,\theta) \in \partial B \setminus \partial \omega
\end{array}\right.
\end{align*}
\end{enumerate}
When $u_0$ satisfies conditions (a) and (b), we refer to $u_0$ as being \emph{suitably prepared}. 

\begin{proposition}\label{tango}
Let $F(u)$ be given by 
\begin{align*}
    F(u)=\int_{B} |\nabla u|^2 + M\chi_{\omega} \det \nabla u \, \dx,
\end{align*}
where $\omega$ is the sector defined by $|\theta| \leq \frac{\pi}{4}$ in plane polar coordinates and $|M|<4$.  Assume that $u_0$ is suitably prepared boundary data.  Then the unique minimizer of $F(u)$ in the class $H_{u_0}^1(B;\R^2)$
obeys $u(x)=Eu(Ex)$ for almost every $x \in B$.  Moreover, in plane polar coordinates, $u$ has the formal representation \begin{align}
  \label{sector} \left( \begin{array}{c} u_1^s(R,\theta) \\
   u_2^s(R,\theta)
    \end{array} \right)  &  = \left( \begin{array}{c} \sum_{k \geq 0} A_{4k}\cos(4k\theta)R^{4k}+A_{4k+2}\cos((4k+2)\theta)R^{4k+2} \\
   \sum_{k \geq 0} B_{4k}\sin(4k\theta)R^{4k}+B_{4k+2}\sin((4k+2)\theta)R^{4k+2}
    \end{array} \right),
\end{align}
valid for $(R,\theta)$ corresponding to the sector $\omega$, and
\begin{align}\label{pacman}
   \left( \begin{array}{c} u_1^p(R,\theta) \\
   u_2^p(R,\theta)
    \end{array} \right)  &  = \left( \begin{array}{c} \sum_{k \geq 0} A_{4k}\cos(4k\theta)R^{4k}+\left(A_{4k+2}+\frac{M}{2}B_{4k+2}\right)\cos((4k+2)\theta)R^{4k+2} \\
   \sum_{k \geq 0} \left(B_{4k}+\frac{M}{2}A_{4k}\right)\sin(4k\theta)R^{4k}+B_{4k+2}\sin((4k+2)\theta)R^{4k+2}
    \end{array} \right)
\end{align}
otherwise.  
\end{proposition}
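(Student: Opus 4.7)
The plan is to take \eqref{sector}--\eqref{pacman} as defining a candidate $u$, verify that $u$ is a weak solution of the Euler--Lagrange equation \eqref{ELinhombc} for $F$, and then appeal to the uniqueness part of Proposition~\ref{basicfact} to identify this candidate with the minimizer. The hypothesis $|M|<4$ feeds Lemma~\ref{mcisland} to deliver mean coercivity, which is what unlocks Proposition~\ref{basicfact} and makes the whole strategy available.

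First I would establish the symmetry assertion. Let $u$ denote the unique minimizer in $H_{u_0}^1(B;\R^2)$ and set $\tilde u(x):=Eu(Ex)$. Using $E^2=I$, one computes $\nabla\tilde u(x)=E\,\nabla u(Ex)\,E$, whence $|\nabla\tilde u(x)|^2=|\nabla u(Ex)|^2$ and, since $(\det E)^2=1$, also $\det\nabla\tilde u(x)=\det\nabla u(Ex)$. A change of variable $y=Ex$ combined with the $E$-invariance $\chi_\omega\circ E=\chi_\omega$ of the sector then gives $F(\tilde u)=F(u)$; admissibility of $\tilde u$ is secured by condition (a) on $u_0$. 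Uniqueness forces $\tilde u=u$ a.e., equivalently $u_1$ is even and $u_2$ odd in $\theta$. This is what legitimises restricting to pure cosine (resp.\ sine) series for the first (resp.\ second) component in the harmonic expansions below.

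Next I would verify the properties of the candidate in \eqref{sector}--\eqref{pacman} one by one. Each summand is of the form $R^n\cos(n\theta)$ or $R^n\sin(n\theta)$ and is therefore harmonic, so Proposition~\ref{howtosolveEL}(i) holds term by term in each of $\omega$ and $B\setminus\omega$. Negative powers of $R$ and any $\log R$ terms are ruled out by $H^1$-regularity at the origin, which lies on the boundary of both regions. Condition (b) on $u_0$ is exactly the trace of \eqref{sector}--\eqref{pacman} at $R=1$, so the Dirichlet condition is built in. Continuity across the radial cuts $\theta=\pm\pi/4$ collapses to the identities $\cos((4k+2)\pi/4)=0$ and $\sin(4k\pi/4)=0$, which kill precisely the modes whose coefficients differ between $u^{s}$ and $u^{p}$; only the shared $A_{4k}$ and $B_{4k+2}$ contributions survive at the interface, and the check at $\theta=-\pi/4$ is automatic from the $E$-symmetry.

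The main obstacle is the jump condition \eqref{conslawboundary} on $\partial\omega\setminus\partial B$. At $\theta=\pi/4$ one has $\nu=\et$ and $\tau=J\nu=-\er$, so \eqref{conslawboundary} rearranges to
\begin{align*}
\tfrac{2}{R}\,\partial_\theta(u^{s}-u^{p})\big|_{\theta=\pi/4}=-MJ\,\partial_R u\big|_{\theta=\pi/4}.
\end{align*}
Inserting \eqref{sector}--\eqref{pacman} and using the trigonometric values above, the first component of this identity forces the $\cos((4k+2)\theta)R^{4k+2}$-coefficient of $u_1^{p}$ to exceed $A_{4k+2}$ by $\tfrac{M}{2}B_{4k+2}$, while the second component forces the $\sin(4k\theta)R^{4k}$-coefficient of $u_2^{p}$ to exceed $B_{4k}$ by $\tfrac{M}{2}A_{4k}$; these are precisely the shifts built into \eqref{pacman}, and they also explain why the Fourier modes split mod~$4$. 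Once continuity and \eqref{conslawboundary} are in hand, reversing the integration-by-parts step in the proof of Proposition~\ref{howtosolveEL} shows that $u$ solves \eqref{ELinhombc} weakly, and uniqueness from Proposition~\ref{basicfact} identifies the candidate with the minimizer. The only residual analytic issue is convergence of the series, which is why \eqref{sector}--\eqref{pacman} is advertised as a formal representation; any decay on $\{A_n\},\{B_n\}$ placing $u_0$ in $H^{1/2}(\partial B;\R^2)$ makes the construction rigorous in $H^1$.
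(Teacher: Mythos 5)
Your argument is correct, and it differs in orientation from the paper's: you verify sufficiency, the paper derives necessity. The paper starts from the (unknown) minimizer, uses the $E$-symmetry, harmonicity, and the observation that the slit region $B\setminus\omega$ forces integer exponents in the separated-variable solutions, then matches coefficients across $\partial\omega$ and finally extracts the shift $A_{4k+2}\mapsto A_{4k+2}+\tfrac{M}{2}B_{4k+2}$, $B_{4k}\mapsto B_{4k}+\tfrac{M}{2}A_{4k}$ from the jump condition; the ansatz \eqref{sector}--\eqref{pacman} thus emerges as the conclusion of a step-by-step derivation. You instead take \eqref{sector}--\eqref{pacman} as given, verify piecewise harmonicity term by term, check continuity across $\theta=\pm\pi/4$ by the trigonometric vanishing $\cos((4k+2)\pi/4)=0$, $\sin(4k\pi/4)=0$, compute that the jump condition \eqref{conslawboundary} holds with the stated coefficient shifts, reverse the integration by parts of Proposition~\ref{howtosolveEL} to recover the weak Euler--Lagrange equation, and appeal to uniqueness (via Lemma~\ref{mcisland} plus Proposition~\ref{basicfact}) to identify the candidate with the minimizer. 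Both routes are sound; the derivational one explains \emph{why} the modes split mod~$4$ and why integer powers occur, which your approach assumes rather than explains, whereas the verification is shorter and avoids the more delicate matching argument in the paper. Your symmetry step and the handling of the lower boundary $\theta=-\pi/4$ via $E$-symmetry coincide with the paper's, and you correctly flag the formal-series/convergence caveat, which the paper also leaves formal.
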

\begin{proof}
We identify the ball $B$ with the set $\{(R,\theta): \ 0 \leq R < 1, \ -\pi < \theta \leq \pi\}$. Defining $\bar{u}(x):=Eu(Ex)$ for all $x \in B$, we find by a direct calculation that 
$$F(u)=F(\bar{u})$$
and hence, by uniqueness, that $u(x)=\bar{u}(x)$ for almost every $x \in B$.  This proves the first part of the statement of the proposition which, in components, amounts to
\begin{align}\label{symm}
    \left(\begin{array}{c}
       u_1(x_1,x_2)     \\
       u_2(x_1,x_2)
          \end{array}\right) & = \left(\begin{array}{c}
       u_1(x_1,-x_2)     \\
       -u_2(x_1,-x_2)
          \end{array}\right).
\end{align}
Hence $u_1$ is an even function of $x_2$ and $u_2$ is odd in $x_2$.  Given that $u$ solves the Euler-Lagrange equation in Proposition \ref{howtosolveEL}, it must in particular be that $u$ is harmonic in both $\omega$ and $B \setminus \omega$.  It is standard that solutions to Laplace's equation can be expressed as superpositions of functions of the form $R^{\alpha} \cos(\alpha \theta)$ and $R^{\alpha} \sin(\alpha \theta)$, and in view of the fact that $u_1$ is even in $x_2$, it is clear that in each of $\omega$ and $B \setminus \omega$, $u_1$ should depend only on (sums of) functions of the type $v(R,\theta;\alpha):=R^{\alpha}\cos(\alpha \theta)$, with a similar outcome for the form of $u_2$.   

The region $B\setminus\omega$ is cut by the line $\theta=-\pi$, which, in our coordinate system, is equivalent to $\theta=\pi$.  Letting $(B\setminus\omega)^+$ be the part of $B\setminus \omega$ characterized by polar angles in the interval $(\pi/4,\pi]$, and by identifying $(B\setminus\omega)^-$ similarly with polar angles belonging to $[-\pi,-\pi/4)$, we find that the function $\theta \mapsto v(R,\theta;\alpha)$ is smooth on $B\setminus \omega$ only if 
\begin{align}\label{alphaint}
   \lim_{\theta \to \pi_{-}} \partial_{\theta}v(R,\theta;\alpha)\arrowvert_{(B\setminus\omega)^+} & = \lim_{\theta \to -\pi_{+}} \partial_{\theta}v(R,\theta;\alpha)\arrowvert_{(B\setminus\omega)^-}.
\end{align}
Equation \eqref{alphaint} then implies that $\alpha \in \mathbb{Z}$, and hence
\begin{align}\label{u1p} u_1^p =\sum_{j=0}^{\infty} C_j R^j \cos(j\theta) \quad \quad (R,\theta) \in B \setminus \omega,\end{align}
and, similarly, 
\begin{align}\label{u2p}u_2^p=\sum_{j=0}^{\infty} D_j R^j \sin(j\theta) \quad \quad (R,\theta) \in B \setminus \omega.\end{align}  
By Proposition \ref{continuity}, $u$ must be continuous in $B$, which in particular means that we may treat $u_1(R,\pi/4)$ as a boundary condition when solving $\Delta u_1^s=0$ in $\omega$.  It follows that 
$$u_1^s =\sum_{j=0}^{\infty} A_j R^j \cos(j\theta) \quad \quad (R,\theta) \in \omega,$$
 where, by a matching argument, it is necessary that $A_j=C_j$ for all $j$ that are not of the form $j=4n+2$ for some nonnegative integer $n$.  Similarly, 
 $$u_2^s =\sum_{j=0}^{\infty} B_j R^j \sin(j\theta) \quad \quad (R,\theta) \in \omega,$$
where it is necessary that $B_j=D_j$ for all $j$ that are not of the form $j=4n$ for some nonnegative integer $n$.   
 
 To conclude the proof of the proposition, we show that the final form of the solution, as given by \eqref{sector} and \eqref{pacman}, flows from the hitherto unused `jump condition' part of the Euler-Lagrange equation, namely \eqref{conslawboundary}.  In the current coordinates, when calculated along the upper part of $\partial \omega$, \eqref{conslawboundary} becomes 
 \begin{align}\label{JC}
 2 \partial_{\theta} u^s(R,\pi/4) - 2 \partial_{\theta} u^p(R,\pi/4)  = MR J^T \partial_{R} u^s(R,\pi/4) \quad \quad 0<R<1.
 \end{align}
 The $e_1$ component reads
\begin{align*}
    \sum_{j=0}^{\infty} j (C_j - A_j) R^j \sin(j \pi/4) = \frac{M}{2} \sum_{j=0}^{\infty} j B_j R^j \sin(j \pi /4).
\end{align*}
The only possible non-zero terms on the left-hand side correspond to $j$ of the form $j=4k+2$, since in all other cases we have $A_j=C_j$.  Thus in any group of four consecutive integers $4k, \ldots, 4k+3$, where $k\geq 0$, it must be, by a straightforward matching argument, that $B_{4k+1}=B_{4k+3}=0$ and 
$$ C_{4k+2}-A_{4k+2}= \frac{M}{2}B_{4k+2}.$$
Hence $D_{4k+1}=D_{4k+3}=0$ and, by studying the $e_2$ component of \eqref{JC}, we find that $A_{4k+1}=A_{4k+3}=0$, so $C_{4k+1}=C_{4k+3}=0$, and
$$B_{4k}-D_{4k}= -\frac{M}{2}A_{4k}.$$
Eliminating $C_{j}$ and $D_{j}$ from \eqref{u1p} and \eqref{u2p} leads to \eqref{pacman}.  Finally, the symmetry of the solution $u$ expressed via \eqref{symm} implies in particular that $\partial_{\theta}u(R,\pi/4)=-E \partial_{\theta}u(R,-\pi/4)$ and 
$\partial_{R}u(R,\pi/4)=E\partial_{R}u(R,-\pi/4)$, where $E$ is given by \eqref{E}.  Inserting this into \eqref{JC} gives, after some manipulation using the facts that $E^2=\1$ and $EJE=-J$,
\begin{align*}
 2 \partial_{\theta} u^s(R,-\pi/4) - 2 \partial_{\theta} u^p(R,-\pi/4)  & = MR EJE \partial_{R} u^s(R,-\pi/4) \\
 & = -MR J\partial_{R} u^s(R,-\pi/4)
 \end{align*}
for $0<R < 1$.  It can be checked that this is exactly \eqref{conslawboundary} when applied to the lower part of $\partial \omega$, and hence this is satisfied whenever \eqref{JC} holds.  The solution fits the suitably prepared data $u_0$ by construction.  
\end{proof}

In fact, we believe the previous result holds for general boundary data $u_0$ in $H^{-\frac{1}{2}}(\partial B,\R^2)$ and not just for the suitably prepared kind.  Indeed, no such restriction is needed in the variational principle that leads to the existence of $u$ minimizing $F(\cdot)$, so why should it appear as a condition in Proposition \ref{tango}?  A fortiori, when $|M|<4$ we could infer---again directly from the variational principle---that in order to match the solution given in \eqref{sector} and \eqref{pacman}, any $u_0$ should have a unique development given by condition (b) above.   There are several levels of complexity to this problem, perhaps the most basic of which is, given $u_0$, to find a way to compute for nonnegative integers $k$ the coefficients $A_{4k}$, $A_{4k+2}$, $B_{4k}$, $B_{4k+2}$ appearing in \eqref{sector}
and \eqref{pacman}.  Here is one practical approach that rephrases the relation $u=u_0$ on $\partial B$ in terms of finding extensions to the various component functions $u_{01}(1,\theta)$ and $u_{02}(1,\theta)$.  We must stress that, for general boundary data $u_0$, while our method shows that these extensions exist and are unique, it does not show \emph{how} to find them. 

Let
\begin{align}\label{division}
    u_0(1,\theta) = \left\{\begin{array}{ll}
        u_0^s(1,\theta) & |\theta| \leq \pi/4 \\
        u_0^p(1,\theta) & \pi/4 \leq |\theta| \leq \pi
    \end{array}\right.
\end{align}
and consider, for illustration, the problem of fitting the first components $u_{01}^s(1,\theta)$ and $u_{01}^p(1,\theta)$ to the solution $u$ given in Proposition \ref{tango}.  Let $\overline{u_{01}^s}(1,\theta)$ be any even extension of $u_{01}^s(1,\theta)$ to the interval $[-\pi/2,\pi/2]$ and suppose that we seek $A_{4k}, A_{4k+2}$ for $k\geq 0$ such that 
\begin{align}\label{fairestisle}
    \overline{u_{01}^s}(1,\theta) = \sum_{k \geq 0} A_{4k}\cos(4k\theta)+A_{4k+2}\cos((4k+2)\theta) \quad \quad |\theta|\leq \frac{\pi}{2}.
\end{align}
Setting $\Theta=2\theta$, \eqref{fairestisle} is equivalent to 
\begin{align*}
    \overline{u_{01}^s}\left(1,\frac{\Theta}{2}\right) = \sum_{k \geq 0} A_{4k}\cos(2k\Theta)+A_{4k+2}\cos((2k+1)\Theta) \quad \quad |\Theta|\leq \pi,
\end{align*}
from which it is immediate that $\{A_{4k},A_{4k+2}\}_{k \geq 0}$ are the Fourier cosine coefficients of $\overline{u_{01}^s}\left(1,\frac{\Theta}{2}\right)$ and, moreover, by restriction, that the desired fitting 
\begin{align}\label{barber}
    u_{01}^s(1,\theta) = \sum_{k \geq 0} A_{4k}\cos(4k\theta)+A_{4k+2}\cos((4k+2)\theta) \quad \quad |\theta|\leq \frac{\pi}{4}
\end{align}
has been achieved.  Note the apparent `degrees of freedom':  there are potentially infinitely many choices of coefficients $\{A_{4k},A_{4k+2}\}_{k \geq 0}$ that are consistent with \eqref{barber}. 

The procedure for fitting a series of the form given by the first component of \eqref{pacman}, evaluated at $R=1$, to $u_{01}^p(1,\theta)$ is similar, but there are more restrictions.  Let $\overline{u_{01}^p}(1,\theta)$ be an even extension of $u_{01}^p(1,\theta)$ from $\{\theta: \ \frac{\pi}{4} \leq |\theta|\leq \pi\}$ to $[-\pi,\pi]$.  It suffices to find coefficients $\{P_n\}_{n \geq 0}$ such that 
\begin{align*}
    \overline{u_{01}^p}(1,\theta) = \sum_{n \geq 0} P_n \cos(n \theta)   \quad \quad |\theta|\leq \pi
\end{align*}
and to impose $P_{4k+1}=P_{4k+3}=0$ through the choice of the extension, as well as $P_{4k}=A_{4k}$ and $P_{4k+2}=B_{4k+2}+\frac{M}{2}A_{4k+2}$, where $A_{4k+2}$ is as above and $B_{4k+2}$ is yet to be defined.  (See Proposition \ref{bigcountry} for the latter.)  Assuming this has been done, by restriction we then have
\begin{align*}
u_{01}^p(1,\theta) = \sum_{k\geq 0} A_{4k}\cos(4k\theta) + \left(B_{4k+2}+\frac{M}{2}A_{4k+2}\right)\cos((4k+2)\theta) \quad \quad \frac{\pi}{4} \leq |\theta|\leq \pi.
\end{align*}
On this occasion, the extension  $\overline{u_{01}^p}(1,\theta)$ is required to have no odd Fourier cosine modes, and is connected to extensions including  $\overline{u_{01}^s}$ via the requirement that 
\begin{align*}
    \int_{-\pi}^{\pi}  \overline{u_{01}^p}(1,\theta)\cos(4k\theta) \, d\theta = \int_{-\pi}^{\pi}  \overline{u_{01}^s}\left(1,\frac{\Theta}{2}\right)\cos(2k\Theta) \, d\Theta \quad \quad \quad k \geq 0.
\end{align*}
Other such conditions can be derived similarly, and the results are recorded in Proposition \ref{bigcountry} below, as is the observation that, despite the apparent latitude available to us in the choice of even (and odd, see below) extensions, the uniqueness of the minimizing $u$ forces the corresponding extensions to be unique. This is in fact easy to see:  since the minimizer $u$ is unique, the coefficients $A_{4k}, \ldots, B_{4k+2}$ are also unique, and hence so are the Fourier cosine and sine series defining the extensions $\overline{u_{01}^s}, \overline{u_{01}^p},\overline{u_{02}^s}$ and $,\overline{u_{02}^p}$.  

    \begin{proposition}\label{bigcountry}
    Let $u_0$ belong to $H^{-\frac{1}{2}}(\partial B,\R^2)$ and let $u_0^s$ and $u_0^p$ be given by \eqref{division}. \\
    \noindent {\bf{(a)}} Let $\overline{u_{01}^s}$ be any even extension of $u_{01}(1,\theta)$ to $|\theta|\leq \pi/2$ and 
$\overline{u_{02}^s}$ be any odd extension of $u_{01}(1,\theta)$ to $|\theta|\leq \pi/2$.  Let the Fourier cosine and sine series of $\overline{u_{01}^s}(1,\Theta/2)$ and $\overline{u_{02}^s}(1,\Theta/2)$ be 
\begin{align}\label{beldivedremo1}
\overline{u_{01}^s}\left(1,\frac{\Theta}{2}\right) & = \sum_{k \geq 0} a_{4k}\cos(2k\Theta)+a_{4k+2}\cos((2k+1)\Theta) \quad \quad |\Theta|\leq \pi \ \mathrm{and}\\ \label{beldivedremo2}
\overline{u_{02}^s}\left(1,\frac{\Theta}{2}\right) & = \sum_{k \geq 0} b_{4k}\cos(2k\Theta)+b_{4k+2}\cos((2k+1)\Theta) \quad \quad |\Theta|\leq \pi.
\end{align}
Then the function 
    \begin{align*}
 w(R,\theta) &  := \left( \begin{array}{c} \sum_{k \geq 0} a_{4k}\cos(4k\theta)R^{4k}+a_{4k+2}\cos((4k+2)\theta)R^{4k+2} \\
   \sum_{k \geq 0} b_{4k}\sin(4k\theta)R^{4k}+b_{4k+2}\sin((4k+2)\theta)R^{4k+2}
    \end{array} \right)
\end{align*}
defined for $0\leq R \leq 1$ and $|\theta|\leq \frac{\pi}{4}$ is such that $w=u_0$ on $\partial \omega$.  
\\
    \noindent {\bf{(b)}} 
    Let $\overline{u_{01}^p}(1,\theta)$ be an even extension of $u_{01}^p(1,\theta)$ from $\{\theta: \ \frac{\pi}{4} \leq |\theta|\leq \pi\}$ to $[-\pi,\pi]$.  Similarly, let $\overline{u_{02}^p}(1,\theta)$ be an odd extension of $u_{02}^p(1,\theta)$ from $\{\theta: \ \frac{\pi}{4} \leq |\theta|\leq \pi\}$ to $[-\pi,\pi]$.  Let the Fourier cosine and sine series of $\overline{u_{01}^p}(1,\theta)$ and $\overline{u_{02}^s}(1,\theta)$ be 
\begin{align}\label{bruchnot1}
\overline{u_{01}^p}\left(1,\theta\right) &= \sum_{k \geq 0} a'_{4k}\cos(2k\theta)+a'_{4k+2}\cos((2k+1)\theta) \quad \quad |\theta|\leq \pi \ \mathrm{and}\\
\label{bruchnot2}
\overline{u_{02}^p}\left(1,\theta\right) &= \sum_{k \geq 0} b'_{4k}\cos(2k\theta)+b'_{4k+2}\cos((2k+1)\theta) \quad \quad |\theta|\leq \pi.
\end{align}
Then the function 
    \begin{align*}
 W(R,\theta) &  = \left( \begin{array}{c} \sum_{k \geq 0} a'_{4k}\cos(4k\theta)R^{4k}+a'_{4k+2}\cos((4k+2)\theta)R^{4k+2} \\
   \sum_{k \geq 0} b'_{4k}\sin(4k\theta)R^{4k}+b'_{4k+2}\sin((4k+2)\theta)R^{4k+2}
    \end{array} \right)
\end{align*}
defined for $0\leq R \leq 1$ and $\{\theta: \ \frac{\pi}{4} \leq |\theta|\leq \pi\}$ is such that $W=u_0$ on $\partial B \setminus \partial \omega$.  
\\
    \noindent {\bf{(c)}} There are unique extensions $\overline{u_{01}^s}$, $\overline{u_{02}^s}$, $\overline{u_{01}^p}$ and $\overline{u_{02}^p}$
    such that the coefficients appearing in \eqref{beldivedremo1}, \eqref{beldivedremo2}, \eqref{bruchnot1} and \eqref{bruchnot2} are related by the equations
    \begin{align*}
    a'_{4k} & =a_{4k} \\
       a'_{4k+2} & =a_{4k+2}+\frac{M}{2} b_{4k+2}  \\
       b'_{4k} & =b_{4k}+\frac{M}{2}a_{4k} \\
       b'_{4k+2} & =b_{4k+2}
    \end{align*}
    for $k \geq 0$.    In these circumstances, the unique global minimizer $u$ of $F(u)$ in $H_{u_0}^1(B;\R^2)$ is given by 
    \begin{align*}
        u(R,\theta) = \left\{ 
        \begin{array}{ll }
          w(R,\theta)   &  \quad (R,\theta) \in \omega\\
            W(R,\theta) &  \quad (R,\theta) \in B \setminus \omega.
        \end{array}
    \right.
    \end{align*}
    \end{proposition}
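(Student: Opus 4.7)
The strategy is to reduce parts (a) and (b) to classical Fourier series on $[-\pi,\pi]$, and then to derive (c) by matching against the explicit form of the minimizer given in Proposition \ref{tango}.

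For part (a), set $\Theta := 2\theta$, so that $|\theta|\leq \pi/4$ corresponds to $|\Theta|\leq \pi/2$, and the even extension $\overline{u_{01}^s}(1,\theta)$ corresponds to a function of $\Theta$ on $[-\pi,\pi]$ that is even in $\Theta$. Classical Fourier theory then supplies a cosine series $\sum_{n\geq 0} c_n \cos(n\Theta)$; relabeling by splitting into even $n=2k$ and odd $n=2k+1$ produces the form \eqref{beldivedremo1}. The analogous argument for the odd $\overline{u_{02}^s}$ gives a sine series of the same parity structure. Substituting back $\cos(n\Theta)=\cos(2n\theta)$ and $\sin(n\Theta)=\sin(2n\theta)$ produces exactly the frequencies $4k$ and $4k+2$ that appear in \eqref{sector}, and restriction to $|\theta|\leq \pi/4$ gives $w(1,\theta)=u_0^s(1,\theta)$, i.e.\ $w = u_0$ on $\partial\omega$.

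For part (b), the extensions $\overline{u_{0j}^p}(1,\theta)$ already live on $[-\pi,\pi]$, so ordinary Fourier cosine/sine expansions are directly available, and grouping by the parity of the index produces \eqref{bruchnot1} and \eqref{bruchnot2}. Restricting $\theta$ to $\{\pi/4\leq|\theta|\leq\pi\}$ gives $W(1,\theta)=u_0^p(1,\theta)$, i.e.\ $W=u_0$ on $\partial B\setminus\partial\omega$.

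Part (c) contains the real substance. For existence of suitable extensions, Proposition \ref{tango} supplies a unique minimizer $u$ of $F$ in $H_{u_0}^1(B;\R^2)$ with the explicit representation \eqref{sector}--\eqref{pacman}. Reading off its values on $\partial B$, I take $\overline{u_{0j}^s}$ and $\overline{u_{0j}^p}$ to be the even/odd extensions whose Fourier coefficients are precisely the $A_{4k}, A_{4k+2}, B_{4k}, B_{4k+2}$ appearing in \eqref{sector} and the $A_{4k}, A_{4k+2}+\frac{M}{2}B_{4k+2}, B_{4k}+\frac{M}{2}A_{4k}, B_{4k+2}$ appearing in \eqref{pacman}. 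The four linear coupling relations claimed in (c) then follow by matching coefficients term by term. For uniqueness, since the minimizer $u$ is unique, the coefficients $A_{4k}, A_{4k+2}, B_{4k}, B_{4k+2}$ are uniquely determined, hence so are the coefficients in \eqref{beldivedremo1}--\eqref{bruchnot2}, and by uniqueness of Fourier series the extensions themselves are unique. The identifications $u|_\omega = w$ and $u|_{B\setminus\omega}=W$ are then immediate from comparing the explicit forms.

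The main obstacle is bookkeeping rather than conceptual: one must ensure that the domain compression $\Theta = 2\theta$ in part (a) delivers frequencies in $\theta$ of exactly the types $4k$ and $4k+2$, and that the even/odd parity choices in part (b) correctly annihilate the modes $4k+1$ and $4k+3$ so that the matching with \eqref{pacman} at $R=1$ is consistent. These are routine parity checks once the substitution structure is in place, and no further analytic input beyond Proposition \ref{tango} is required.
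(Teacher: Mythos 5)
Your proposal matches the paper's treatment. The paper in fact contains no standalone proof block for Proposition \ref{bigcountry}: the argument is folded into the informal discussion that precedes it, and that discussion uses exactly your ingredients — the substitution $\Theta = 2\theta$ to recast the sector fitting in part (a) as a classical Fourier expansion on $[-\pi,\pi]$; direct Fourier expansion of the even/odd extensions in part (b) with parity bookkeeping to kill the modes $4k+1$ and $4k+3$; and, for part (c), the observation that uniqueness of the minimizer $u$ (from mean coercivity via Proposition \ref{basicfact}) forces the coefficients $A_{4k},\dots,B_{4k+2}$ to be unique, which in turn forces the extensions to be unique.

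One small caution worth making explicit: you invoke Proposition \ref{tango} to supply the representation \eqref{sector}--\eqref{pacman}, but that proposition is stated only for \emph{suitably prepared} boundary data, whereas Proposition \ref{bigcountry} nominally applies to general $u_0\in H^{-\frac12}(\partial B;\R^2)$. The paper is candid about this gap --- it writes ``we believe the previous result holds for general boundary data'' immediately before Proposition \ref{bigcountry} and does not fully bridge it. What is actually needed is only the structural part of Proposition \ref{tango}'s proof (harmonicity in $\omega$ and $B\setminus\omega$ from Proposition \ref{howtosolveEL}, continuity from Proposition \ref{continuity}, the jump condition, and the resulting series expansions), which does not use the symmetry hypothesis. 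If you phrase the appeal as being to that argument rather than to the statement of Proposition \ref{tango} as a whole, the proof is on the same footing as the paper's.
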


\subsection{An `island problem' in three dimensions} \label{3dislandexample}

In this subsection we treat the functional \eqref{island3d} given by 
 \begin{align}\label{3dF}
     \tilde{F}(u):=\int_\om |\nabla u|^2 + T \cdot \cof \nabla u \dx,
 \end{align}
 where $\om$ is a given domain in $\R^3$ and where $T \in L^{\infty}(\om,\R^{3 \times 3})$ is given by $T:=T_0 \chi_{_{\omega}}$ for some constant matrix $T_0$ and a fixed $\omega \subset \om$.  
The objective is to examine the behaviour of $\tilde{F}$ on the class of test functions $C_c^{\infty}(\om;\R^3)$ and then on the class $H_{u_0}^1(\om;\R^3)$.
Since the integrand of $\tilde{F}$ is $2-$homogeneous, it is clear that if there is just one test function $u$ such that $\tilde{F}(u)<0$ then, via a simple scaling argument that makes use of the zero boundary conditions in force, $\tilde{F}$ is unbounded below.  Hence either 
\begin{align*}
    \inf\{\tilde{F}(u): \ u  \in C_c^{\infty}(\om;\R^3)\} = -\infty
\end{align*}
or 
\begin{align*}
\min\{\tilde{F}(u): \ u  \in C_c^{\infty}(\om;\R^3)\} = 0.
\end{align*}
Our first result gives a condition on $||T||_{\infty}$ which guarantees that the second of these two possibilities holds.

\begin{lemma}\label{3dislandpos}
Let \begin{align*}
     \tilde{F}(u)=\int_\om |\nabla u|^2 + T \cdot \cof \nabla u \dx,
 \end{align*}
 where $\om$ is a domain in $\R^3$, and let $T \in L^{\infty}(\om,\R^{3 \times 3})$ be given by $T:=T_0 \chi_{_{\omega}}$, as above.  Assume that 
 \begin{align}\label{simplebound}
     |T_0| \leq 2\sqrt{3}.
 \end{align}
Then $\tilde{F}(u) \geq 0$ for all $u$ in $C_c^{\infty}(\om;\R^3)$.  Moreover, if \eqref{simplebound} holds with a strict inequality, there is $\gamma>0$ such that 
\begin{align*}
     \tilde{F}(u) \geq \gamma \int_\om |\nabla u|^2 \dx \quad \forall u \in H^1_0(\om;\R^3).
\end{align*}
\end{lemma}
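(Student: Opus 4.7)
The plan is to exploit the fact that, for compactly supported test maps $u$, the matrix field $\cof \nabla u$ is a null Lagrangian in divergence form on $\R^3$. This null-Lagrangian property lets me rewrite $\tilde F(u)$ in a \emph{symmetrized} form whose integrand can then be controlled pointwise by a sharp singular-value estimate.

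First I would verify, for $u \in C_c^\infty(\om;\R^3)$, that each entry of $\cof \nabla u$ can be written as
\[
(\cof \nabla u)_{ij} = \partial_m\!\left(\tfrac12 \epsilon_{ikl}\epsilon_{jmn}\, u_k \partial_n u_l\right),
\]
the cross term $\epsilon_{ikl}\epsilon_{jmn}\,u_k \partial_m\partial_n u_l$ vanishing by symmetry of mixed partials against the antisymmetry of $\epsilon_{jmn}$. Consequently $\int_\om \cof \nabla u \dx = 0$ and in particular $\int_\om T_0 \cdot \cof \nabla u \dx = 0$. Writing
\[
T_0 \chi_{\omega} = \tfrac12 T_0 + \tfrac12 T_0\bigl(\chi_{\omega} - \chi_{\om \setminus \omega}\bigr)
\]
and discarding the constant piece yields the symmetrized representation
\[
\tilde F(u) = \int_\om |\nabla u|^2 + \tfrac12 (\chi_{\omega} - \chi_{\om \setminus \omega})\, T_0 \cdot \cof \nabla u \dx.
\]

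Next I would establish the pointwise inequality $|\cof A| \leq |A|^2/\sqrt{3}$ for every $A \in \R^{3\times 3}$. Parameterising $A$ by its singular values $\sigma_1,\sigma_2,\sigma_3$ gives $|A|^2 = \sum_i \sigma_i^2$ and $|\cof A|^2 = \sum_{i<j} \sigma_i^2 \sigma_j^2$, so
\[
|A|^4 - 3|\cof A|^2 = \tfrac12\sum_{i<j}(\sigma_i^2 - \sigma_j^2)^2 \geq 0.
\]
Since $|\chi_{\omega} - \chi_{\om \setminus \omega}| \leq 1$, the symmetrized integrand above is bounded below by $\bigl(1 - |T_0|/(2\sqrt{3})\bigr)|\nabla u|^2$, giving $\tilde F(u) \geq 0$ under the hypothesis $|T_0| \leq 2\sqrt{3}$. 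When the inequality is strict, the constant $\gamma := 1 - |T_0|/(2\sqrt{3})$ is positive and the resulting estimate on $C_c^\infty(\om;\R^3)$ extends to $H_0^1(\om;\R^3)$ by density, $\tilde F$ being continuous on $H^1$.

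The crux, and the only genuine obstacle, is obtaining the sharp threshold $2\sqrt 3$ rather than the weaker value $\sqrt 3$ one would arrive at by applying the pointwise cofactor bound directly to $T_0 \chi_\omega \cdot \cof \nabla u$. The factor of two improvement rests on the null-Lagrangian cancellation that permits the replacement of $T_0 \chi_\omega$ by $\tfrac12 T_0(\chi_\omega - \chi_{\om \setminus \omega})$, exactly mirroring the mechanism behind the planar bound $|M| < 4$ recorded in \cite[Proposition 3.4]{BKV23}.
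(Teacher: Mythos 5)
Your proof is correct and follows essentially the same route as the paper's: both symmetrize via the null-Lagrangian cancellation to replace $T_0\chi_\omega$ by $\tfrac12 T_0(\chi_\omega-\chi_{\om\setminus\omega})$, and both then invoke the sharp pointwise bound $\sqrt{3}\,|\cof A|\le |A|^2$ derived from the singular-value decomposition (the paper isolates this as Lemma \ref{symphonynumber9}). The only cosmetic difference is in producing $\gamma$: you read $\gamma = 1 - |T_0|/(2\sqrt3)$ directly off the pointwise estimate, while the paper instead splits off an $\varepsilon\int|\nabla u|^2$ piece and applies the nonnegativity result to the remainder — both yield the same constant.
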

\begin{proof}We make use of the well-known fact that $\int_{\om}\cof \nabla u \, \dx=0$ for any test function $u$ and note that it immediately implies 
\begin{align*}
    \int_{\om} T_0 : \cof \nabla u \, \dx = 0 
\end{align*}
for any constant matrix $T_0$.  Hence,
\begin{align*}
    \tilde{F}(u) & =\tilde{F}(u)-\int_{\om}\frac{T_0}{2} : \cof \nabla u \, \dx \\ 
    & = \int_{\omega} \underbrace{|\nabla u|^2 + \frac{T_0}{2} : \cof \nabla u}_{(i)} \, \dx + \int_{\om \setminus \omega} \underbrace{|\nabla u|^2 - \frac{T_0}{2} : \cof \nabla u}_{(ii)} \, \dx.
\end{align*}
Applying Lemma \ref{symphonynumber9} below, the integrands indicated by $(i)$ and $(ii)$ are pointwise nonnegative as long as $|T_0|\leq 2\sqrt{3}$, which proves the first part of the proposition. 
Now assume that $|T_0| < 2\sqrt{3}$ and consider, for any $\gamma \in (0,1)$, 
\begin{align*}
    \hat{F}(u)  = \gamma\int_{\om} |\nabla u|^2 \, \dx + (1-\gamma)\underbrace{\int_{\om} |\nabla u|^2 + \frac{T_0}{1-\gamma} : \cof \nabla u \, \dx}_{(iii)}.
\end{align*}
Choosing $\gamma>0$ so that $|T_0|/(1-\gamma)\leq 2\sqrt{3}$, we can apply the result of the first part of the proposition to the functional labeled $(iii)$ and conclude that it is nonnegative.  This proves the second part of the proposition.
\end{proof}

The following straightforward technical lemma was needed in the proof of Proposition \ref{3dislandpos}. To keep the paper self-contained, we give a short proof but observe that the result is almost certainly available elsewhere in the literature.

\begin{figure}
    \centering
    \includegraphics[width=0.5\linewidth]{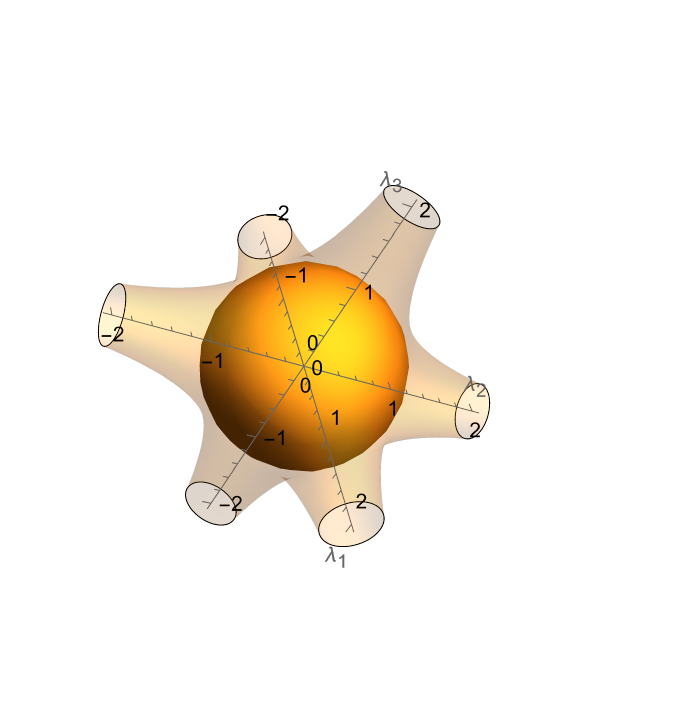}
    \caption{The unit ball $|A|^2=1$ corresponds to $\{(\lambda_1,\lambda_2,\lambda_3) \in R^3: \ \lambda_1^2 + \lambda_2^2 + \lambda_3^2=1\}$, where the $\lambda_i\geq 0$ are the singular values of $A$, while the light-coloured surface shown corresponds in these coordinates to $A$ such that $\sqrt{3}\, |\cof A| = 1$.  The inequality featuring in Lemma \ref{symphonynumber9} is equivalent to $\sqrt{3}\, |\cof A| \leq 1$ for all $A$ such that $|A|=1$.  If this inequality were to fail for some $A$ then there would exist a point $(\lambda_1,\lambda_2,\lambda_3)$ lying both strictly outside the surface shown and within the unit ball visible inside it, which, visually at least, is impossible.}
    \label{fig:enter-labelT}
\end{figure}

\begin{lemma}\label{symphonynumber9} Let $A\in \R^{3\times 3}$.  Then $\sqrt{3}\, |\cof A| \leq |A|^2$, and the inequality is sharp. 
\end{lemma}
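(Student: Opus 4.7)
The plan is to reduce the matrix inequality to an elementary symmetric-function inequality in the singular values of $A$, and then verify sharpness by exhibiting an equality case.

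First I would apply the singular value decomposition to write $A = U \Sigma V^T$, where $U, V \in SO(3)$ (after absorbing signs into $\Sigma$ if needed) and $\Sigma = \mathrm{diag}(\sigma_1, \sigma_2, \sigma_3)$ with $\sigma_i \geq 0$. Since the Frobenius norm and the cofactor operation are compatible with this decomposition in the sense that $|A|^2 = |\Sigma|^2$ and $|\cof A|^2 = |\cof \Sigma|^2$, the inequality $\sqrt{3}\,|\cof A|\leq |A|^2$ becomes
\begin{equation*}
3\bigl(\sigma_2^2\sigma_3^2 + \sigma_1^2\sigma_3^2 + \sigma_1^2\sigma_2^2\bigr) \leq \bigl(\sigma_1^2 + \sigma_2^2 + \sigma_3^2\bigr)^2.
\end{equation*}

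Next, setting $a=\sigma_1^2$, $b=\sigma_2^2$, $c=\sigma_3^2$ (all nonnegative), I would expand the right-hand side to obtain the equivalent scalar inequality
\begin{equation*}
3(ab+ac+bc) \leq a^2+b^2+c^2+2(ab+ac+bc),
\end{equation*}
i.e.\@ $ab+ac+bc \leq a^2+b^2+c^2$. This is a classical symmetric inequality, and I would justify it in one line by noting that
\begin{equation*}
(a^2+b^2+c^2)-(ab+ac+bc) = \tfrac{1}{2}\bigl((a-b)^2+(b-c)^2+(a-c)^2\bigr)\geq 0.
\end{equation*}

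Finally, for sharpness I would take $A = \mathrm{id}$, for which $|A|^2 = 3$, $\cof A = \mathrm{id}$ and $|\cof A| = \sqrt{3}$, so $\sqrt{3}\,|\cof A| = 3 = |A|^2$. More generally equality forces $a=b=c$, i.e.\@ $A$ is a scalar multiple of an orthogonal matrix (a conformal map), matching the geometric picture in Figure \ref{fig:enter-labelT}.

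No step is genuinely hard here; the only item requiring mild care is the reduction of $|\cof A|^2$ to the second elementary symmetric function of the squared singular values, which follows from the fact that the singular values of $\cof A$ are $\sigma_2\sigma_3$, $\sigma_1\sigma_3$, $\sigma_1\sigma_2$ (a consequence of $\cof(U\Sigma V^T) = (\cof U)(\cof \Sigma)(\cof V^T)$ together with $\cof U, \cof V \in SO(3)$).
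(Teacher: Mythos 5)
Your proof is correct and follows essentially the same route as the paper's: reduce via SVD to the elementary symmetric-function identity $|A|^2 = \sum \sigma_i^2$, $|\cof A|^2 = \sum_{i<j}\sigma_i^2\sigma_j^2$ (the paper cites \cite[Prop 13.4]{Dac08} for exactly this), and then settle the scalar inequality, with $A=\1$ giving sharpness. If anything your write-up is slightly more complete, since the paper only argues that $\mu^2-2\le 1$ is \emph{necessary} and leaves the sufficiency (your sum-of-squares step $ab+bc+ca\le a^2+b^2+c^2$) to the reader as ``easily checked.''
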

 \begin{proof}Using a singular value decomposition for $A$, \cite[Prop 13.4]{Dac08} tells us that 
 \begin{align*}
     |A|^2 & = \lambda_1^2+\lambda_2^2+\lambda_3^2, \ \textrm{and}\\
     |\cof A|^2 &  =  \lambda_1^2\lambda_2^2 + \lambda_2^2\lambda_3^2+ \lambda_3^2\lambda_1^2.
 \end{align*}
 In these terms, an inequality of the form $\mu|\cof A| \leq |A|^2$, where we deliberately leave $\mu$ unspecified, 
is equivalent to 
\begin{align}\label{mostly}
 (\mu^2-2)(\lambda_1^2\lambda_2^2 + \lambda_2^2\lambda_3^2+ \lambda_3^2\lambda_1^2) \leq \lambda_1^4+\lambda_2^4+\lambda_3^4.   
\end{align}
It is easily checked that \eqref{mostly} holds only if $\mu^2-2\leq 1$, which implies that $\mu =\sqrt{3}$ is the largest possible.  To see that the stated inequality is sharp, take $A=\1$.\end{proof}

By means of Lemma \ref{3dislandpos}, one can prove the existence and uniqueness of a minimizer of $\tilde{F}$ in $\sca_2$, and that the associated Euler-Lagrange equation is linear in $u$. 

\begin{proposition}
    Let $\tilde{F}$ be given by \eqref{3dF}.  Then if $|T_0|<2\sqrt{3}$, $\tilde{F}$ has a unique global minimizer in $ H_{u_0}^1(\om;\R^3)$ which obeys the Euler-Lagrange equation
    \begin{align}\label{EL3d}
       \int_{\om} 2 \nabla u \cdot \nabla \varphi + T \cdot (\nabla u,\nabla \varphi) \, \dx = 0 \quad \forall \varphi \in H_0^1(\om;\R^3).
    \end{align}
    Here, given $A,B \in \R^{3 \times 3}$, $(A,B)$ is the $3 \times 3$ matrix with $(i,j)$ entry 
    \begin{align}\label{crossterm}
        (A,B)_{ij}=\eps^{iab}\eps^{jcd}A_{ac}B_{bd},
    \end{align}
    where $\eps^{iab}$ is the standard alternating symbol on three elements.\footnote{The alternating symbol appears in particular in the identity $(\cof A)_{ij} = 
    \frac{1}{2}\eps^{iab}\eps^{jcd}A_{ac}A_{bd}$, which explains the `cross term' $(A,B)$ in \eqref{crossterm}.}
  \end{proposition}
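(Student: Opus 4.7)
The plan is to mirror the two-dimensional argument of Proposition \ref{basicfact}: first establish a quadratic decomposition of $\tilde F$, then use the mean coercivity from Lemma \ref{3dislandpos} to run the direct method, derive the Euler--Lagrange equation by a one-parameter variation, and conclude uniqueness exactly as in the planar case.

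The algebraic preliminary is to expand the cofactor of a sum in three dimensions. Using the identity $(\cof A)_{ij}=\tfrac12\eps^{iab}\eps^{jcd}A_{ac}A_{bd}$ recalled in the footnote of the statement, a direct expansion together with the antisymmetry of $\eps$ (which lets us relabel $a\leftrightarrow b$, $c\leftrightarrow d$ in the mixed terms) yields
\begin{equation*}
\cof(A+B)=\cof A + (A,B) + \cof B\qquad \forall\, A,B\in\R^{3\times 3},
\end{equation*}
with $(A,B)$ as in \eqref{crossterm}. Substituting $A=\nabla u$, $B=\nabla(v-u)$ and combining with $|\nabla v|^2=|\nabla u+\nabla(v-u)|^2$ produces the decomposition
\begin{equation*}
\tilde F(v)=\tilde F(u)+\tilde\sco(u,v-u)+\tilde F(v-u)\qquad \forall\, u,v\in H^1_{u_0}(\om;\R^3),
\end{equation*}
where $\tilde\sco(u,\varphi):=\int_{\om}2\nabla u\cdot\nabla\varphi+T\cdot(\nabla u,\nabla\varphi)\,\dx$ is precisely the left-hand side of \eqref{EL3d}.

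For existence I take a minimizing sequence $u_n=u_0+\varphi_n$ with $\varphi_n\in H^1_0(\om;\R^3)$. Rearranging the decomposition gives $\tilde F(\varphi_n)=\tilde F(u_n)-\tilde F(u_0)-\tilde\sco(u_0,\varphi_n)$; since $\tilde\sco(u_0,\cdot)$ is a bounded linear functional on $H^1_0$ (with norm controlled by $\|\nabla u_0\|_{L^2}$ and $\|T\|_\infty$), mean coercivity forces $\|\nabla\varphi_n\|_{L^2}$ to be bounded, so $\varphi_n\rightharpoonup\varphi$ along a subsequence. The crucial analytic step is weak lower semicontinuity of $\tilde F$ on $H^1_0$. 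I would obtain it by polarising $\tilde F$ into a symmetric bilinear form $B$ which, by the obvious estimate $|B(\varphi,\psi)|\le(1+\|T\|_\infty)C\|\nabla\varphi\|_{L^2}\|\nabla\psi\|_{L^2}$, is continuous on $H^1_0\times H^1_0$, and which by Lemma \ref{3dislandpos} satisfies $B(\varphi,\varphi)\ge \gamma\|\nabla\varphi\|_{L^2}^2$. Hence $B$ defines an inner product equivalent to the standard one on $H^1_0(\om;\R^3)$, the induced norm $\sqrt{\tilde F(\cdot)}$ is weakly lower semicontinuous, and the linear term $\tilde\sco(u_0,\cdot)$ is weakly continuous, so $u:=u_0+\varphi$ is a minimizer.

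The Euler--Lagrange equation \eqref{EL3d} is then immediate: for $\varphi\in H^1_0(\om;\R^3)$ and $t\in\R$, the decomposition gives $\tilde F(u+t\varphi)=\tilde F(u)+t\,\tilde\sco(u,\varphi)+t^2\tilde F(\varphi)$, so differentiating at $t=0$ yields $\tilde\sco(u,\varphi)=0$. Uniqueness follows verbatim from the proof of Proposition \ref{basicfact}: if $v$ is another minimizer then $v-u\in H^1_0$, the EL equation kills the cross term, and $\tilde F(v)=\tilde F(u)+\tilde F(v-u)\ge\tilde F(u)+\gamma\|\nabla(v-u)\|_{L^2}^2$; equality of $\tilde F(u)$ and $\tilde F(v)$ then forces $v=u$ a.e. The main subtlety I anticipate is the weak lower semicontinuity, since $\int T\cdot\cof\nabla u$ is not \emph{a priori} weakly continuous on $H^1(\om;\R^3)$ when $T$ is merely bounded; the route above sidesteps the question completely by exploiting that mean coercivity promotes $\tilde F$ on $H^1_0$ to a genuine Hilbert norm squared, for which weak lower semicontinuity is automatic.
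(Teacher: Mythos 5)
Your proposal is correct and takes essentially the same route as the paper: the paper likewise reduces the proof to the decomposition $\tilde F(u+\varphi)=\tilde F(u)+\tilde\sco(u,\varphi)+\tilde F(\varphi)$, then invokes the direct method and the argument of Proposition \ref{basicfact} for existence, the Euler--Lagrange equation, and uniqueness. You go slightly further than the paper's terse proof by explicitly addressing weak lower semicontinuity of $\tilde F$ on $H^1_0(\om;\R^3)$ via the observation that mean coercivity plus boundedness makes the polarization of $\tilde F$ an equivalent inner product, which is a clean way to dispose of the otherwise delicate weak continuity of $u\mapsto\int_\om T\cdot\cof\nabla u$; this is a detail the paper omits under the blanket phrase ``direct method.''
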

\begin{proof} The proof is similar to that of Proposition \ref{basicfact} and we omit most of the details other than to point out that, for $u \in H_{u_0}^1(\om;\R^3)$ and $\varphi \in H_0^1(\om,\R^3)$, the decomposition 
\begin{align*}
    \tilde{F}(u+\varphi)=\tilde{F}(u)+\int_{\om} 2\nabla u \cdot \nabla \varphi +  T \cdot  (\nabla u,\nabla \varphi) \, \dx + \tilde{F}(\varphi)
\end{align*}
    is the origin of the term $(\nabla u,\nabla \varphi)$ appearing in \eqref{EL3d}. 
\end{proof}

\section{The role of $F(u)$ in constrained variational problems}\label{section3}

Let $F(u)$ be given by \eqref{Finhom}, namely 
\begin{align}\label{Finhom2}
    F(u) = \int_{\om} |\nabla u|^2 + f \det \nabla u \dx
\end{align}
for some fixed $f$ belonging to $L^{\infty}(\om)$, 
and for any $u$ in $H^1(\om;\R^2)$ recall that
\begin{align}\label{dirichlet}\mathbb{D}(u) =\int_{\om}|\nabla u|^2 \dx.
\end{align}
It is a classical problem, whose origins lie in incompressible nonlinear elasticity theory, to minimize $\D(u)$ over functions $u$ such that $\det \nabla u = g$ a.e.\@, where $g$ is a fixed function.  By applying a boundary condition in the form of a trace, we let
\begin{align}\label{ag}
    \sca_g :=\{u \in H^1_{u_0}(\om;\R^2): \det \nabla u = g \ \mathrm{a.e.}\}.
\end{align}
The main result of this section, which we later illustrate by means of two detailed examples, is the following.
\begin{theorem}\label{minFgivesminD} Let $F(u)$ be given by \eqref{Finhom2} and let $\D(u)$ be the Dirichlet energy for $u$, as defined by \eqref{dirichlet}.  Assume that $F$ obeys the mean coercivity condition that there exists $\gamma > 0$ depending only the function $f$ and the domain $\om$ such that
\begin{align}\label{meancoercive}
    F(\varphi) \geq \gamma \int_{\om} |\nabla \varphi|^2 \dx \quad \quad \forall \varphi \in H_0^1(\om;\R^2).
\end{align}
Let $u$ minimize $F$ in $H^1_{u_0}(\om;\R^2)$. Then $u$ is the unique minimizer of $\D$ in the class $\sca_g$, where $g:=\det \nabla u$. 
\end{theorem}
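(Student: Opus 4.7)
The plan is to follow exactly the four-step blueprint advertised in the introduction. The underlying algebraic observation is that on the constrained class $\sca_g$ the functional $F$ and the Dirichlet energy $\D$ differ only by the constant $\int_\om f g \dx$, so minimizing $\D$ over $\sca_g$ is equivalent to minimizing $F$ over $\sca_g$. Since $\sca_g \subset H^1_{u_0}(\om;\R^2)$ and $u$ already minimizes $F$ on the larger free class, the conclusion should drop out immediately from the decomposition identity of Proposition \ref{basicfact} combined with mean coercivity.

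First I would verify that $\sca_g$ is non-empty: by the very definition $g := \det \nabla u$, the free minimizer $u$ itself lies in $\sca_g$. Let $v \in \sca_g$ be arbitrary and set $\varphi := v-u$, which belongs to $H_0^1(\om;\R^2)$ because $u$ and $v$ share the boundary trace $u_0$. The decomposition \eqref{MKdecomposition} gives
\[
F(v) = F(u) + a(u, v-u) + F(v-u).
\]
Because $u$ is the global minimizer of $F$ on the free class $H^1_{u_0}(\om;\R^2)$, Proposition \ref{basicfact} also guarantees that $u$ satisfies the Euler-Lagrange equation $a(u,\psi) = 0$ for all $\psi \in H_0^1(\om;\R^2)$; taking $\psi = v-u$ kills the middle term.

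Next I would invoke the constraint $\det \nabla v = \det \nabla u = g$ a.e.\ to conclude $F(v) - F(u) = \D(v) - \D(u)$, so the identity above collapses to
\[
\D(v) - \D(u) = F(v-u).
\]
The mean coercivity hypothesis \eqref{meancoercive} applied to $\varphi = v-u \in H_0^1(\om;\R^2)$ then yields $\D(v) - \D(u) \geq \gamma \int_{\om} |\nabla(v-u)|^2 \dx \geq 0$, so $u$ minimizes $\D$ on $\sca_g$. For uniqueness, equality $\D(v) = \D(u)$ forces $\int_{\om} |\nabla(v-u)|^2 \dx = 0$, hence $\nabla(v-u) = 0$ a.e., and since $v-u$ has zero trace on $\partial \om$ this gives $v = u$ a.e.

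I anticipate no real obstacle: every ingredient has already been built up (the quadratic decomposition, the Euler-Lagrange equation for the free minimizer, the mean coercivity bound). The only subtlety worth flagging is the insistence that $\gamma > 0$ be \emph{strictly} positive, as this is what upgrades minimality to uniqueness; if the theorem were stated under the weaker hypothesis $F(\varphi) \geq 0$ on $H_0^1(\om;\R^2)$, one would recover existence of a minimizer of $\D$ in $\sca_g$ but not its uniqueness, since then $F(v-u) = 0$ would not necessarily force $v = u$.
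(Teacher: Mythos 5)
Your proof is correct and follows essentially the same route as the paper: apply the decomposition of Proposition \ref{basicfact}, kill the middle term via the Euler-Lagrange equation, use the common constraint $\det\nabla v = \det\nabla u = g$ to convert $F(v)-F(u)$ into $\D(v)-\D(u)$, and finish with mean coercivity. The only cosmetic difference is that you re-derive the identity $\D(v) = \D(u) + F(v-u)$ inline, whereas the paper packages that step as Lemma \ref{diddly}.
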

The point is that by minimizing $F$ on the larger class $H^1_{u_0}(\om;\R^2)$, one can apply some of the machinery introduced in Section \ref{sectiontwo}, and there emerges a technique for generating minimizers of $\D$ on sets of constrained admissible functions $\sca_g$ as outlined in Steps 1-4\@ in the Introduction.  The proof of Theorem \ref{minFgivesminD} relies in part on the following decomposition result for $\D(u)$ in the class $\sca_g$,  which we remark is much like that of \eqref{MKdecomposition} for $F(u)$ in the class $H_{u_0}^1(\om;\R^2)$.   

\begin{lemma}\label{diddly} Let $\sca_g$ be given by \eqref{ag} and let $u,v \in \sca_g$.  Then 
\begin{align}\label{forall}
    \D(v) = \D(u) + \sco(u,v-u) + F(v-u),\end{align}
where 
\begin{align*}
    \sco(u,\varphi):=\int_{\om}2 \nabla u  \cdot \nabla \varphi + f \,\cof \nabla u \cdot \nabla \varphi \dx. 
\end{align*}
\end{lemma}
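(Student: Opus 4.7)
The plan is to reduce the claimed identity to a pointwise algebraic identity for $2\times 2$ matrices that leverages precisely the constraint $\det\nabla u=\det\nabla v=g$ a.e.

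First I would set $\varphi:=v-u$. Since both $u$ and $v$ lie in $H^1_{u_0}(\om;\R^2)$, we have $\varphi \in H_0^1(\om;\R^2)$, so that $F(\varphi)$ and $\sco(u,\varphi)$ are well defined. Expanding $|\nabla v|^2 = |\nabla u+\nabla \varphi|^2$ in the usual way gives
\begin{align*}
\D(v) = \D(u) + \int_{\om} 2\nabla u \cdot \nabla \varphi \dx + \int_{\om} |\nabla \varphi|^2 \dx.
\end{align*}
Comparing this with the proposed right-hand side $\D(u)+\sco(u,\varphi)+F(\varphi)$, the identity \eqref{forall} is equivalent to
\begin{align*}
\int_{\om} f(x)\left(\cof \nabla u \cdot \nabla \varphi + \det \nabla \varphi\right) \dx = 0.
\end{align*}

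The key step is then to invoke the algebraic determinant expansion that is already used in the proof of Proposition \ref{basicfact}, namely
\begin{align*}
\det(\nabla u + \nabla \varphi) = \det \nabla u + \cof \nabla u \cdot \nabla \varphi + \det \nabla \varphi \quad \textrm{a.e. in} \ \om.
\end{align*}
Because $\nabla u + \nabla \varphi = \nabla v$, the left-hand side equals $\det \nabla v = g$ a.e., while $\det \nabla u = g$ a.e. as well, so $\cof \nabla u \cdot \nabla \varphi + \det \nabla \varphi = 0$ a.e. in $\om$. Multiplying by $f\in L^\infty(\om)$ and integrating yields the required vanishing of the above integral, completing the proof.

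No step looks to be a serious obstacle: the membership $\varphi \in H_0^1(\om;\R^2)$ makes the terms $\sco(u,\varphi)$ and $F(\varphi)$ integrable via the $L^\infty$ bound on $f$ together with $|\cof \nabla u|=|\nabla u|$ and $|\det\nabla\varphi|\le \tfrac12|\nabla\varphi|^2$ in two dimensions, so the cancellation is genuinely pointwise and no subtle distributional interpretation of the null Lagrangian is required here — the constraint $\det\nabla u=\det\nabla v$ is being used directly.
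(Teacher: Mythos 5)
Your proof is correct, and it matches (in spirit) an alternative argument the authors themselves wrote as a commented-out block in the source, but it is arranged differently from the published proof. You expand $\D(v)=\D(u+\varphi)$ directly, observe that the claimed identity is equivalent to $\int_\om f\,(\cof\nabla u\cdot\nabla\varphi + \det\nabla\varphi)\dx = 0$, and then establish the stronger pointwise cancellation $\cof\nabla u\cdot\nabla\varphi + \det\nabla\varphi=0$ a.e.\ in $\om$ using the determinant expansion together with the constraint $\det\nabla u=\det\nabla v=g$ a.e. The paper instead starts from the identity $F(v)=F(u)+\sco(u,v-u)+F(v-u)$ already proved in Proposition~\ref{basicfact}, notes that $\int_\om f\det\nabla u\dx=\int_\om f\det\nabla v\dx$ (only the integral equality, not the pointwise one, is used), concludes $F(v)-F(u)=\D(v)-\D(u)$, and substitutes. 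What your route buys is a self-contained argument that makes the pointwise mechanism explicit and bypasses any reference to the earlier proposition; what the paper's route buys is economy, since it reuses \eqref{MKdecomposition} wholesale and only needs the weaker integral cancellation. The two are genuinely the same computation in different packaging, and either would be accepted.
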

\begin{proof} From Proposition \ref{basicfact}, we have for any $u,v \in \sca_g$
\begin{align}\label{d774} F(v) & = F(u) + a(u,v-u)+ F(v-u).
\end{align}
Since $u,v \in \sca_g$, it follows that 
$$\int_{\om} f \, \det \nabla u \dx = \int_{\om} f \, \det \nabla v \dx, $$
and hence that 
$$ F(v) - F(u) = \D(v)-\D(u).$$
Substituting this into \eqref{d774} gives \eqref{forall}.
\end{proof}

Now we are able to give the proof of Theorem \ref{minFgivesminD}.

\begin{proof} Since $u$ minimizes $F(u)$ in $H^1_{u_0}(\om;\R^2)$, \eqref{ELinhombc} holds and we deduce that 
\begin{align*} \sco(u,v-u)=0
\end{align*}
for any $v$ in $H^1_{u_0}(\om;\R^2)$.   Further, since $u$ is assumed to belong to $\sca_g$, Lemma \ref{diddly} gives for any $v \in \sca_g$
\begin{align}\label{yap}
    \D(v)=\D(u) + F(\varphi)
\end{align}
where $\varphi:=v-u$ belongs to $H_0^1(\om;\R^2)$.  Finally, by mean coercivity \eqref{meancoercive}, we see that $\D(v) \geq \D(u)$ for all $v$ in $\sca_g$, with equality if and only if $v=u$ a.e..  
\end{proof}

We remark that Theorem \ref{minFgivesminD} applies to any of the solutions of the Euler-Lagrange equation for the functional 
$$F(u)= \int_{B} |\nabla u|^2 + M\chi_{\omega}\det \nabla u \dx$$
studied in Sections \ref{disk-disk} and \ref{disk-sector},
including those given by \eqref{diskdisksol} and \eqref{example1}, say, when $|M|<4$.  Since $F$ is mean coercive (by Lemma \ref{mcisland}), we can conclude that each of these solutions is a global minimizer of $F$ in a class of the form 
$\sca_{u_0}$ for suitable boundary data $u_0$.  Here, $\omega$ would either be a disk or a sector, as per Sections \ref{disk-disk} and \ref{sector} respectively.   

In the following two sections we apply Theorem \ref{minFgivesminD} to pressure functions $f$ that reflect rather different geometries.

\subsection{Example 1: the pure insulation problem with piecewise affine boundary conditions. }\label{insulation_f}

In this example our goal is to apply Theorem \ref{minFgivesminD} to the pressure function 
\begin{align}\label{threefi}
    f : =\sum_{i=1}^3 f_i \chi_{\omega_i}
\end{align}
in the rectangular domain $\om:=\omega_1 \cup \omega_2 \cup \omega_3$, where $\omega_1, \omega_2, \omega_3$ are specified in Figure \ref{pic:pureinsulation}.
\begin{figure}[H]
\centering
\begin{minipage}{0.69\textwidth}
\centering
\begin{tikzpicture}[scale=0.8]
\node (A) at (-4,-2) {}; 
\node[right=2 of A.center] (B) {};
\node[right=4 of A.center] (O) {};
\node[right=4 of B.center] (C) {};
\node[right=2 of C.center] (D) {};

\node[above=4 of A.center] (A') {};
\node[above=4 of B.center] (B') {};
\node[above=4 of O.center] (O') {};
\node[above=4 of C.center] (C') {};
\node[above=4 of D.center] (D') {};

\node[right=2 of B.center] (BC) {};
\node[right=2 of B'.center] (B'C') {};

\filldraw[thick, top color=gray!30!,bottom color=gray!30!] (A.center) rectangle node{} (B'.center);
\filldraw[thick, top color=gray!30!,bottom color=gray!30!] (C.center) rectangle node{} (D'.center);


\node (R1) at ($(A)!0.5!(B')$) {$\omega_1$};    
\node (R-1) at ($(B)!0.5!(C')$) {$\omega_2$};    
\node (R-1) at ($(C)!0.5!(D')$) {$\omega_3$};    


\draw[thick] (B.center) -- (C.center);
\draw[thick] (B'.center) -- (C'.center);
\end{tikzpicture}
\end{minipage}
\begin{minipage}{0.29\textwidth}
\begin{align*}
\omega_1  &=(\scalebox{0.9}{$-1,-\frac{1}{2}$})\times (\scalebox{0.9}{$-\frac{1}{2  }$},\scalebox{0.9}{$\frac{1}{2}$}), \\
     \omega_2  &=(-\scalebox{0.9}{$\frac{1}{2}$},\scalebox{0.9}{$\frac{1}{2}$})\times (\scalebox{0.9}{$-\frac{1}{2  }$},\scalebox{0.9}{$\frac{1}{2}$}), \\
     \omega_3  &=(\scalebox{0.9}{$\frac{1}{2}$},1)\times (\scalebox{0.9}{$-\frac{1}{2}$},\scalebox{0.9}{$\frac{1}{2}$}).
\end{align*}
\end{minipage}
\caption{Distribution of subdomains $\omega_1, \omega_2, \omega_3$ in Example 1.
}  \label{pic:pureinsulation}
\end{figure}
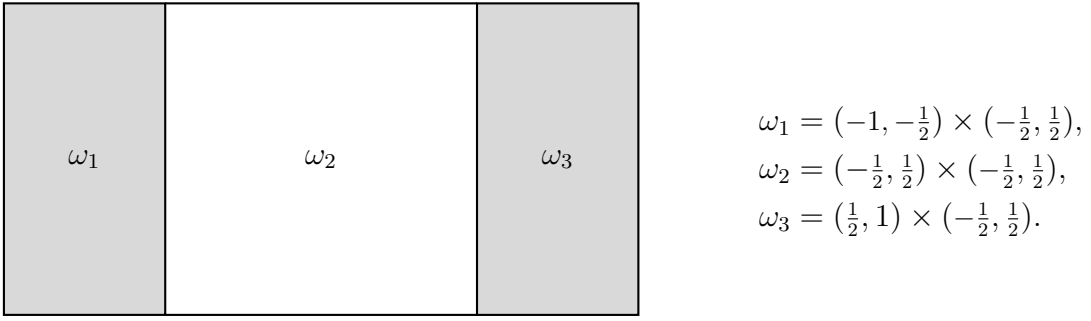

To ensure that the mean coercivity condition \eqref{meancoercive} holds, restrictions on the constants $f_1,f_2,f_3$ are necessary.

\begin{lemma}\label{MCexample1} Let $f$ be given by \eqref{threefi}.
Then there is $c>2$ such that the functional $F$ given by \eqref{Finhom} is mean coercive in the sense of \eqref{meancoercive} provided
\begin{align}\label{cond1} f_2 & = \frac{f_1+f_3}{2} \ \ \mathrm{and}\\ \label{cond2}
    |f_2& -f_1|  < c.
\end{align}
\end{lemma}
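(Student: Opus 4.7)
The plan is to reduce the mean coercivity claim for $F$ to a mean Hadamard inequality for a simpler sign-balanced pressure, and then to invoke the machinery of \cite{BKV23}. The key tool is the null-Lagrangian identity $\int_\om \det\nabla\varphi \dx = 0$ valid for all $\varphi \in H_0^1(\om;\R^2)$, which allows us to subtract any constant from $f$ without changing the value of $F(\varphi)$. Subtracting $f_2$ and using the hypothesis $f_2=(f_1+f_3)/2$, so that $f_1-f_2 = -(f_3-f_2)=:M$ with $|M| = |f_2-f_1|$, the functional becomes
$$F(\varphi) = \int_\om |\nabla\varphi|^2 + M\bigl(\chi_{\omega_1}-\chi_{\omega_3}\bigr)(x)\,\det\nabla\varphi \dx.$$
Proving the lemma is therefore equivalent to finding $c_0 > 2$ such that this reduced functional is mean coercive on $H_0^1(\om;\R^2)$ whenever $|M| < c_0$, and then taking $c:=c_0$.

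Next I would apply the scaling device used in Lemma \ref{mcisland}: writing
$$F(\varphi) = \eps \int_\om |\nabla\varphi|^2 \dx + (1-\eps)\int_\om |\nabla\varphi|^2 + \tfrac{M}{1-\eps}\bigl(\chi_{\omega_1}-\chi_{\omega_3}\bigr)\det\nabla\varphi \dx,$$
the claim follows as soon as the bracketed integral is \emph{nonnegative} on $H_0^1(\om;\R^2)$ for all $M$ with $|M|/(1-\eps) \leq c_0$. Given such nonnegativity up to $|M| \leq c_0$, any strict inequality $|M| < c_0$ then permits a small $\eps>0$ making $|M|/(1-\eps) \leq c_0$, whence mean coercivity holds with constant $\gamma = \eps$, as in \eqref{meancoercive}.

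The main obstacle is therefore to establish the strict improvement $c_0 > 2$ beyond what Hadamard's pointwise inequality $|A|^2 \geq 2|\det A|$ alone delivers. Here I would appeal to the mean Hadamard inequalities of \cite{BKV23}, which exploit the fact that equality in Hadamard's pointwise bound forces $\nabla\varphi$ to be conformal or anticonformal at a given point. For the sign-balanced, mean-zero pressure $M(\chi_{\omega_1}-\chi_{\omega_3})$, a test function $\varphi$ saturating the pointwise bound would have to be anticonformal on $\omega_1$ and conformal on $\omega_3$ (or the reverse), while joining consistently across the central rectangle $\omega_2$ and vanishing on $\partial\om$. A blow-up or compactness argument in the spirit of \cite{BKV23} should rule out such extremal configurations and produce the required strict gap $c_0 > 2$, analogous to the jump from the pointwise constant $2$ to the sharper constant $4$ identified in Lemma \ref{mcisland} for the single-island problem.
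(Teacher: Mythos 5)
Your proposal follows the paper's proof essentially verbatim: subtract $f_2\int_\om\det\nabla\varphi\dx$ using the null-Lagrangian identity, use $f_2=(f_1+f_3)/2$ to reduce the pressure to the balanced form $\sigma(\chi_{\omega_1}-\chi_{\omega_3})$ with $\sigma=f_1-f_2$, split off an $\eps$-fraction of the Dirichlet term, and appeal to \cite{BKV23} for nonnegativity of the remainder. The only difference is that the paper pins this last step down by citing \cite[Proposition 4.5]{BKV23} directly, whereas you gesture heuristically at a blow-up/compactness argument for the gap $c>2$; either way the structure and the reliance on the pure-insulation result of \cite{BKV23} are the same.
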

\begin{proof} Since for any smooth test function $\varphi$ we have $\int_{\om}\det \nabla \varphi \, \dx =0$, subtracting $f_2 \int_{\om}\det \nabla \varphi \, \dx$ from $F(\varphi)$ does not change its value.  Hence
\begin{align*}
    F(\varphi) = \int_{\om} |\nabla \varphi|^2 + \left((f_1-f_2)\chi_{\omega_1} +(f_3-f_2)\chi_{\omega_3}\right)\det \nabla \varphi \, \dx, 
\end{align*}
which, thanks to \eqref{cond1}, is of the special form
\begin{align}
    F(\varphi)=\int_{\om} |\nabla \varphi|^2 + \left(\sigma\chi_{\omega_1} -\sigma \chi_{\omega_3}\right)\det \nabla \varphi \, \dx 
\end{align}
and where $\sigma:=f_1-f_2$.  Now we `borrow' some of the Dirichlet term in order to prove mean coercivity, as follows:
\begin{align}\label{modifiedF}
F(\varphi) = \eps \int_{\om} |\nabla \varphi|^2 \, \dx + (1-\eps)\underbrace{\int_{\om}|\nabla \varphi|^2 + \left(\frac{\sigma}{1-\eps}\chi_{\omega_1} -\frac{\sigma}{1-\eps} \chi_{\omega_3}\right)\det \nabla \varphi \, \dx}_{=:K(\varphi)}
    \end{align}
By \eqref{cond2}, $\sigma:=f_1-f_2$ obeys $|\sigma| < c$, and so $|\frac{\sigma}{1-\eps}| < c$ also holds for sufficiently small $\eps$.  Hence, by \cite[Proposition 4.5]{BKV23}, the functional $K(\varphi)\geq 0$ for all test functions $\varphi$, which, together with $\eqref{modifiedF}$ gives the conclusion.  
\end{proof}

\begin{remark}\emph{Some degree of variation on the conditions \eqref{cond1} and \eqref{cond2} is possible whilst retaining the mean coercivity, but the case presented is the clearest we could find in the context of what in \cite{BKV23} is referred to as a `pure insulation' problem.}\end{remark}

Next, let $u_0$ be a continuous, piecewise affine function whose gradient $\nabla u_0$ obeys 
\begin{align}\label{clipper}
\nabla u_0(x) = \left\{\begin{array}{l l} A_1 & \textrm{if } x \in \omega_1 \\
A_2 & \textrm{if } x \in \omega_2 \\
A_3 & \textrm{if } x \in \omega_3,\end{array}\right.
\end{align}
where $A_1,A_2,A_3$ are $2 \times 2$ matrices to be chosen shortly.  Define $g:\om  \to \R$ by setting 
\begin{align}\label{g}
    g:=\sum_{i=1}^3 \det A_i \chi_{\omega_i} 
\end{align}
and let 
\begin{align}\label{ag}
    \sca_g=\{u \in H^1_{u_0}(\om;\R^2): \ \det \nabla u = g \ \textrm{a.e. in } \om\}.
\end{align}

In view of Lemma \ref{MCexample1} and Theorem \ref{minFgivesminD}, in order to find a minimizer of the Dirichlet energy $\D(u)$ on the constrained class of admissible maps $\sca_g$, it is sufficient to minimize $F(u)$ on the larger class $H^1_{u_0}(\om;\R^2)$ whilst ensuring that the minimizer $u$ also belongs to $\sca_g$. Hence we begin by solving a version of \eqref{ELinhombc} tailored to the current setting, namely
\begin{align}\label{ELcurrent}
    \int_{\om} 2\nabla u \cdot \nabla \varphi + \left(\sum_{i=1}^3 f_i \chi_{\omega_i}\right)\cof \nabla u \cdot \nabla \varphi \dx = 0 \quad \forall \varphi \in C_c^{\infty}(\om,\R^2).
\end{align}

\begin{proposition}\label{solveELcurrent} Let $u_0$ be continuous and such that its gradients are given by \eqref{clipper} and let $u \in H^1_{u_0}(\om;\R^2)$ solve \eqref{ELcurrent}. Let 
\begin{align*} \Gamma_{12}=\partial \omega_1 \cap \partial \omega_2 \end{align*}
and similarly for $\Gamma_{23}.$ Then provided the normal derivatives of $u$ exist along $\Gamma_{12}$ and $\Gamma_{23}$, it must be that
\begin{itemize}\item[(i)] $u$ is harmonic in each subdomain $\omega_i$ and
\item[(ii)] the jump conditions
\begin{align}\label{jbc1} (f_2-f_1) J \partial_2 u & =2 \partial_1 u\arrowvert_{\omega_2} - 2 \partial_1 u \arrowvert_{\omega_1} \quad \textrm{along} \ \Gamma_{12} \\
\label{jbc2} (f_3-f_2)J \partial_2 u & =2 \partial_1 u \arrowvert_{\omega_2} - 2 \partial_1 u \arrowvert_{\omega_3} \quad \textrm{along} \ \Gamma_{23} 
\end{align}
hold.  Conversely, $u$ satisfying the conditions in (i) and (ii) must obey \eqref{ELcurrent}. 
\end{itemize}
\end{proposition}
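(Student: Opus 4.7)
The plan is to adapt the argument used to prove Proposition \ref{howtosolveEL} to the current three-piece geometry, in which the single interface $\partial\omega$ is replaced by the two flat interfaces $\Gamma_{12}$ and $\Gamma_{23}$. After extending \eqref{ELcurrent} to hold for $\varphi \in H_0^1(\om;\R^2)$ by a standard density argument, I would attack (i) and (ii) in turn, and close with the converse.

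For part (i), I would localize to each subdomain by picking $\varphi \in C_c^\infty(\omega_i;\R^2)$. On such test functions $f$ is identically $f_i$ on $\spt\varphi$, and Piola's identity $\Div\cof\nabla u = 0$, combined with the compact support of $\varphi$ in $\omega_i$, makes the cofactor contribution vanish. What remains is $\int_{\omega_i} \nabla u\cdot\nabla\varphi\dx=0$ for all such $\varphi$, so Weyl's lemma applied componentwise yields harmonicity of $u$ in $\omega_i$.

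For part (ii), I would integrate by parts on each $\omega_i$ separately. Harmonicity from (i) turns the Laplacian term into the boundary integral $2\int_{\partial\omega_i}\partial_{\nu_i}u\cdot\varphi\dHone$, while Piola's identity together with the $2\times 2$ identity $\cof\nabla u\,\nu=-J\partial_\tau u$ converts the cofactor term into $-f_i\int_{\partial\omega_i}\varphi\cdot J\partial_{\tau_i}u\dHone$. Summing over $i$, contributions on $\partial\om$ vanish since $\varphi$ does, leaving only integrals over $\Gamma_{12}$ and $\Gamma_{23}$. Localizing $\varphi$ to a neighborhood of one interface at a time and exploiting that the outward normals from the two adjoining subdomains are antiparallel (so the associated tangents $\tau_i=J\nu_i$ are also opposite) combines the four boundary terms on $\Gamma_{12}$ into a single linear expression in $\partial_1 u\arrowvert_{\omega_1}$, $\partial_1 u\arrowvert_{\omega_2}$ and $J\partial_2 u$; since $\varphi$ is otherwise arbitrary on the interface, \eqref{jbc1} follows. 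The analogous calculation on $\Gamma_{23}$ produces \eqref{jbc2}. The converse direction reverses each of these integration-by-parts steps: assuming (i) and (ii), $\sco(u,\varphi)$ collapses to surface integrals over $\Gamma_{12}$ and $\Gamma_{23}$ which cancel exactly by virtue of the jump conditions, so $\sco(u,\varphi)=0$ on a dense subspace of $H_0^1(\om;\R^2)$.

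The main technical delicacy is careful sign bookkeeping at the interfaces: because adjacent subdomains furnish opposite outward normals and hence opposite tangents, it is easy to lose (or acquire) a sign when combining the four contributions at each interface. Continuity of $u$ supplied by Proposition \ref{continuity} guarantees that the tangential derivative $\partial_2 u$ is unambiguously defined on each interface, and the hypothesis that the normal derivatives of $u$ exist along $\Gamma_{12}$ and $\Gamma_{23}$ is exactly what makes the integration by parts on each $\omega_i$ rigorous.
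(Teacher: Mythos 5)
Your proposal follows the same route as the paper's proof (which in turn is a variant of Proposition \ref{howtosolveEL}): Piola's identity reduces the cofactor integral to a surface term, the identity $\cof\nabla u\,\nu = J^T\nabla u\,\tau = -J\partial_\tau u$ recasts the flux in terms of the tangential derivative, localization gives harmonicity in each $\omega_i$, and then summing the normal- and cofactor-boundary contributions over the two interfaces — with careful tracking of the antiparallel normals/tangents — yields exactly \eqref{jbc1} and \eqref{jbc2}, with the converse obtained by reversing the integrations by parts. This is essentially the paper's argument.
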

\begin{proof} Since $u \chi_{\omega_i}$ belongs to $H^1(\omega_i;\R^2)$ for any $i$, Piola's identity $\Div \cof \nabla u=0$ shows that $\cof \nabla u \cdot \nabla \varphi$ is a null Lagrangian, and that for a general $\varphi \in C_c^{\infty}(\omega_i;\R^2)$, 
\begin{align*}\int_{\omega_i} \cof \nabla u \cdot \nabla \varphi \dx = \int_{\partial \omega_i} \varphi \cdot \cof \nabla u \, \nu \, \dHone.
\end{align*}
A short calculation shows that 
\begin{align}\label{region1} \int_{\partial \omega_1} \varphi \cdot \cof \nabla u \, \nu \, \dHone & = \int_{\Gamma_{12}} \varphi \cdot J^T\partial_2 u \dHone, \\
\label{region2} \int_{\partial \omega_2} \varphi \cdot \cof \nabla u \, \nu \, \dHone & = \int_{\Gamma_{12}}  \varphi \cdot J \partial_2 u \dHone + \int_{\Gamma_{23}}  \varphi \cdot J^T \partial_2 u \dHone, \\
\label{region3} \int_{\partial \omega_3} \varphi \cdot \cof \nabla u \, \nu \, \dHone & = \int_{\Gamma_{23}} \varphi \cdot J \partial_2 u \dHone. 
\end{align}
Returning to \eqref{ELcurrent}, using \eqref{region1}-\eqref{region3}, and bearing in mind that $f$ is constant on each $\omega_i$, we have 
\begin{align}\label{PDEboundaryform} \int_{\om} 2\nabla u \cdot \nabla \varphi + \left(\sum_{i=1}^3 f_i \chi_{\omega_i}\right) \, \cof \nabla u \cdot \nabla \varphi \dx & = \int_{\om} 2\nabla u \cdot \nabla \varphi \dx + \int_{\Gamma_{12}} (f_2-f_1)  \varphi \cdot J \partial_2 u \dHone   \\  \nonumber & +\int_{\Gamma_{23}} (f_3-f_2) \varphi \cdot J \partial_2 u \dHone. 
\end{align} Now assume that \eqref{ELcurrent} holds. 
Then by taking $\varphi$ in $C_c^{\infty}(\omega_i;\R^2)$ for each $i$ and using \eqref{PDEboundaryform}, it is clear that $u$ is harmonic in each subdomain.   Hence part (i). 
Part (ii) is then a straightforward application of the divergence theorem to \eqref{PDEboundaryform}.
\end{proof}

To conclude this example, we show that the matrices $A_1,A_2$ and $A_3$ can be chosen so that $u=u_0$ both solves \eqref{ELcurrent} and $u_0 \in \sca_g$.  Indeed, it is obvious that $u_0$ obeys part (i) of Proposition \ref{solveELcurrent} and $u_0 \in \sca_g$ where $g$ and $\sca_g$ are given by \eqref{g} and \eqref{ag} respectively.  All that remains to verify are \eqref{jbc1} and \eqref{jbc2}.  

To that end, let $e_1$ and $e_2$ be the canonical basis vectors in $\R^2$.   Firstly, since $u$ is by assumption continuous, Hadamard's condition implies that we must have 
$$ A_1 e_2 = A_2 e_2 = A_3 e_2.$$
Thus the second columns of all the $A_i$ are equal to some $\xi \in \R^2$, say.  To satisfy \eqref{jbc1} and \eqref{jbc2}, 
\begin{align}\label{JJC1}
(f_2-f_1) J \xi & = 2A_2 e_1 - 2 A_1 e_1 \\ \label{JJC2}
(f_3-f_2) J \xi & = 2 A_3 e_1 - 2A_2 e_1
\end{align}
should hold.  Let $\eta= A_2 e_1$.  Then from \eqref{JJC1} and \eqref{JJC2},
\begin{align*}
A_1 e_1  & = 2 \eta - (f_2-f_1)J\xi \\
A_3 e_1 & = 2 \eta + (f_3-f_2)J\xi, 
\end{align*}
and hence suitable matrices are 
\begin{align*} 
A_1 & = (2 \eta - (f_2-f_1)J\xi ) \otimes e_1 + \xi \otimes e_2 \\
A_2& = \eta \otimes e_1 + \xi \otimes e_2 \\
A_3 & =  (2 \eta + (f_3-f_2)J\xi) \otimes e_1 + \xi \otimes e_2
\end{align*} 
where $\xi,\eta \in \R^2$ are free.  We conclude by Theorem \ref{minFgivesminD} that $u_0$ is the global minimizer of $\D(u)$ in the constrained class $\sca_g$.


\begin{remark}\emph{Notice that in this case the minmizer $u=u_0$ behaves as if each Dirichlet energy $\D(v;\omega_i):=\int_{\omega_i}|\nabla v|^2 \dx$ is minimized subject to affine boundary conditions on each $\partial \omega_i$ for $i=1,2,3$. The subtlety here is that we cannot for each $v\in \sca_g$ and each $i=1,2,3$ exploit the quasiconvexity 
\begin{align}\label{ayeayecaptain}
    \D(v;\omega_i) \geq \D(a_i,\omega_i) \quad \forall v\in H^1_{a_i}(\omega_i;\R^2),
\end{align}
in which we employ the notation $a_i:=u_0 \chi_{\omega_i}$, and then simply add the inequalities.  The reason is that there is no guarantee that a typical $v\in \sca_g$ will be affine along $\Gamma_{12}$ or $\Gamma_{23}$, so we do not necessarily have inequality \eqref{ayeayecaptain} for each $i$. This is easily seen:  when the requirement that $v=a_i$ on $\partial \omega_i$ is dropped, it is possible to construct a piecewise affine map $\tilde{v} \in \sca_g$, say, such that for at least one (but not more than two) of the regions $\omega_i$  
$$ \D(a_i;\omega_i) > \D(\tilde{v};\omega_i).$$}
\end{remark}

\subsection{Example 2: a point-contact pressure distribution}\label{sudoku_f}

In this example, we take $\om$ to be the square $Q:=[-1,1]^2$ and assume that it has been divided into the quadrant subsquares $Q_1, Q_2, Q_3, Q_4$ specified in Figure \ref{pic:distribution_Q}.
\begin{figure}[H]
\centering
\begin{minipage}{0.39\textwidth}
\centering
\begin{tikzpicture}[scale=2.5]
\node (A) at (-2,-2) {}; 
\node[right=2 of A.center] (B) {};
\node[right=2 of B.center] (C) {};
\node[right=2 of C.center] (D) {};
\node[above=2 of A.center] (A') {};
\node[above=2 of B.center] (B') {};
\node[above=2 of C.center] (C') {};
\node[above=2 of A'.center] (A'') {};
\node[above=2 of B'.center] (B'') {};
\node[above=2 of C'.center] (C'') {};

\filldraw[thick, top color=gray!30!,bottom color=gray!30!] (B'.center) rectangle node{$Q_1$} (C''.center);

\filldraw[thick, top color=white,bottom color=white] (A'.center) rectangle node{$Q_2$} (B''.center);

\filldraw[thick, top color=gray!30!,bottom color=gray!30!] (A.center) rectangle node{$Q_3$} (B'.center);

\filldraw[thick, top color=white,bottom color=white] (B.center) rectangle node{$Q_4$} (C'.center);

\end{tikzpicture}
\end{minipage}
\begin{minipage}{0.39\textwidth}
\begin{align*}
    Q_1&= [0,1] \times [0,1], \\
    Q_2 & = [-1,0] \times [0,1], \\
    Q_3 & = [-1,0] \times [-1,0], \\
    Q_4 & = [0,1] \times [-1,0].
\end{align*}
\end{minipage}
\caption{Distribution of subdomains $Q_1,\ldots,Q_4$ in Example 2.}
\label{pic:distribution_Q}
\end{figure}
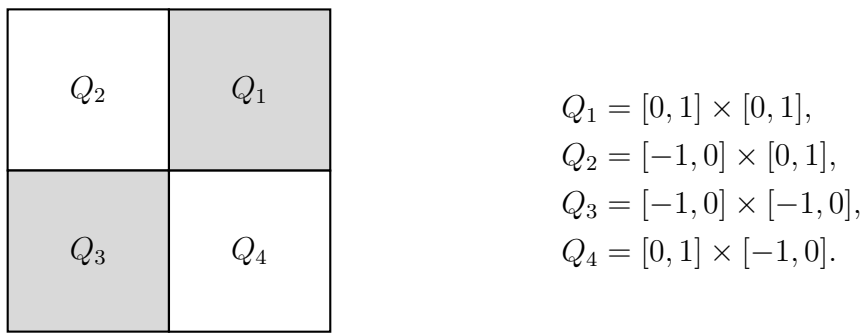
We let $f$ be a pressure function of the form
\begin{align}\label{f2} f = \sum_{i=1}^4 f_i\chi_{_{Q_i}}.\end{align} 
In order to apply Theorem \ref{minFgivesminD}, we must show that $F$ is mean coercive and that the Euler-Lagrange equation \eqref{pdeprime} for $F$, which are given below, can be solved.  In this case, the mean coercivity depends on the values of the constants $f_1,\ldots,f_4$ which, in turn, are entwined with the details of the solution $u$. See \eqref{concrete} for a particular instance of a pressure function $f$ that `fits' with the solution.  Accordingly, we postpone to Lemma \ref{gap} the argument needed to prove mean coercivity and begin by seeking a solution $u$ to the Euler-Lagrange equation for $F(u)$, namely
\begin{align}\label{pdeprime} \int_{Q} 2 \nabla u \cdot \nabla \varphi +  \left(\sum_{i=1}^4 f_i\chi_{_{Q_i}}\right) \, \cof \nabla u \cdot \nabla \varphi \dx = 0 \quad \quad \varphi \in C_c^{\infty}(Q;\R^2).
\end{align}
 In the following statement the sets $\Gamma_{ij}$ are defined in the same way as in Proposition \ref{solveELcurrent}, so that $\Gamma_{12}:=\partial Q_1 \cap \partial Q_2$, and so on.
\begin{proposition}\label{nec2} Let $u \in H^1(Q;\R^2)$ solve \eqref{pdeprime}.  Then provided the normal derivatives of $u$ exist along $\Gamma_{12},\ldots,\Gamma_{41}$, it must be that 
\begin{itemize}\item[(i)] $u$ is harmonic in each subdomain $Q_i$, and
\item[(ii)] the jump conditions 
\begin{align}\label{jc1} 2\partial_2 u\arrowvert_{Q_4} - 2 \partial_2 u\arrowvert_{Q_1} +(f_4-f_1) J \partial_1 u  & =0 \quad \textrm{along} \ \Gamma_{14}\\
\label{jc2}  2\partial_1 u\arrowvert_{Q_2} - 2\partial_1 u\arrowvert_{Q_1} + (f_1-f_2)J\partial_2 u &  = 0 \quad \textrm{along} \ \Gamma_{12}  \\
\label{jc3} 2\partial_2 u\arrowvert_{Q_3}- 2\partial_2 u\arrowvert_{Q_2} +(f_3-f_2) J \partial_1 u & = 0 \quad \textrm{along} \ \Gamma_{23} \\
\label{jc4}
2\partial_1 u\arrowvert_{Q_3} - 2\partial_1 u\arrowvert_{Q_4}
+(f_4-f_3) J \partial_2 u & = 0 \quad \textrm{along} \ \Gamma_{34}.
\end{align}
\noindent hold.
Conversely, $u$ satisfying the conditions in (i) and (ii) must obey \eqref{pdeprime}. 
\end{itemize}
\end{proposition}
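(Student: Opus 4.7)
The plan is to adapt the argument of Proposition \ref{solveELcurrent} to the four-quadrant geometry of $Q_1,\ldots,Q_4$. The engine is again Piola's identity $\Div\cof\nabla u = 0$, which, applied on each $Q_i$ separately and combined with the identity $\cof\nabla u\,\nu = J^T\partial_\tau u$ (where $\tau = J\nu$ is the unit tangent associated to an outward normal $\nu$), converts each restricted cofactor integral $f_i\int_{Q_i}\cof\nabla u\cdot\nabla\varphi\,\dx$ into a sum of line integrals over the (at most two) interior edges of $Q_i$.

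I would first establish (i) by localization. Choosing $\varphi\in C_c^{\infty}(Q_i;\R^2)$ makes the cofactor contributions over the remaining quadrants vanish and kills the surface terms from $Q_i$, so \eqref{pdeprime} reduces to $\int_{Q_i}\nabla u\cdot\nabla\varphi\,\dx=0$, and Weyl's lemma gives $\Delta u = 0$ in $Q_i$. With harmonicity in hand, the bulk term $\int_Q 2\nabla u\cdot\nabla\varphi\,\dx$ can likewise be rewritten as a sum of interface integrals by applying the divergence theorem on each $Q_i$ separately (using $\Div(\nabla u^T\varphi) = \Delta u\cdot\varphi + \nabla u\cdot\nabla\varphi$); for example the contribution along $\Gamma_{12}$ assembles from the $Q_1$ and $Q_2$ sides into $\int_{\Gamma_{12}}\varphi\cdot\bigl(2\partial_1 u|_{Q_2} - 2\partial_1 u|_{Q_1}\bigr)\dHone$, with analogous formulae along the other three interfaces.

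For the cofactor surface integrals, on each interface the two outward normals from the adjacent quadrants are opposite, hence so are the tangents; using $J^T = -J$ the net integrand along $\Gamma_{12}$ combines into $(f_1-f_2)J\partial_2 u$, and the same bookkeeping on $\Gamma_{14}$, $\Gamma_{23}$, $\Gamma_{34}$ produces the prefactors $(f_4-f_1)$, $(f_3-f_2)$, $(f_4-f_3)$ attached to $J\partial_1 u$, $J\partial_1 u$, $J\partial_2 u$ respectively. Assembling the bulk and cofactor contributions, \eqref{pdeprime} becomes the vanishing of a sum of four line integrals over $\Gamma_{12},\ldots,\Gamma_{34}$ for every $\varphi\in C_c^{\infty}(Q;\R^2)$; choosing $\varphi$ with support in a small ball around an interior point of a single $\Gamma_{ij}$, disjoint from the origin and from the other three interfaces, isolates the bracket on that interface and, via the fundamental lemma of the calculus of variations, delivers \eqref{jc1}--\eqref{jc4}. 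The converse is obtained by reversing these manipulations.

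The principal obstacle is the careful orientation bookkeeping at the four interfaces, since each quadrant abuts two internal edges and swapping sides reverses both $\nu$ and $\tau$ and hence flips the sign of $J^T\partial_\tau u$. A minor additional subtlety is that the origin is the common vertex of all four $Q_i$, but being a single point it has $\sch^1$-measure zero and therefore contributes nothing to any of the surface integrals, so it may be ignored throughout.
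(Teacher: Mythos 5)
Your proposal is correct and follows exactly the route the paper intends: the paper's own proof is simply the statement that the argument is analogous to Proposition~\ref{solveELcurrent} and omits the details. You have carried out that adaptation faithfully, with the right use of Piola's identity, the identity $\cof\nabla u\,\nu = J^T\partial_\tau u$, harmonicity via Weyl's lemma, the divergence-theorem rewriting of the bulk term, and the correct orientation bookkeeping at each of the four interfaces; your observation that the origin contributes nothing because it is $\mathcal{H}^1$-null is also the right way to dispose of that point.
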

\begin{proof} This is so similar to the proof of Proposition \eqref{solveELcurrent} that we omit it.
  \end{proof}

The next result demonstrates, by brute force, that solutions to the system \eqref{pdeprime} exist.   

\begin{proposition}\label{hydrostatic_solutions} Let $f$ given by \eqref{f2} and  define, in complex coordinates, the function $u(z)$ by
\begin{align}\label{fullsol} u(z) = \sum_{n \ \textrm{even}} a_n (z^{n} - \bar{z}^{n}) + u^{(1)}(z)\chi_{_{Q_1}}+ u^{(2)}(z)\chi_{_{Q_2}}+u^{(3)}(z)\chi_{_{Q_3}}+u^{(4)}(z)\chi_{_{Q_4}},\end{align} where  
\begin{align*}
    u^{(1)}(z)  
    & = \sum_{n \ \textrm{odd}} \lambda_n^{(1)}\left( 2\beta\gamma(z^n+\bar{z}^n) -p(z^n-\bar{z}^n)\right), \\
    u^{(2)}(z)  
    & = \sum_{n \ \textrm{odd}} \lambda_n^{(2)}\left( 2\gamma\delta(z^n+\bar{z}^n) -p(z^n-\bar{z}^n)\right),\\
    u^{(3)}(z) &=
    \sum_{n \ \textrm{odd}} \lambda_n^{(3)}\left( 2\gamma\delta(z^n+\bar{z}^n) +(4\beta-4\delta)(z^n-\bar{z}^n)\right),\\
    u^{(4)}(z) &=
    \sum_{n \ \textrm{odd}} \lambda_n^{(4)}\left( 2\beta\gamma(z^n+\bar{z}^n) -(4\beta-4\delta)(z^n-\bar{z}^n)\right).
\end{align*}
The constants $a_n$ may be complex, while 
$\alpha:=f_2-f_1$, $\beta:=f_3-f_2$, $\gamma:=f_4-f_3$, $\delta:=f_1-f_4$ and $p:=\beta\gamma\delta+4\beta-4\delta$ are all real. Assume that $\alpha \neq 0,\beta \neq 0, \gamma \neq 0, \delta \neq 0$, and $p \neq 0$.  Then there are two possibilities according to whether the quantity
\begin{align}\label{bruckner3}
    \Delta:= \alpha\beta\gamma\delta+4(\beta-\delta)(\alpha+\gamma)
\end{align}
vanishes or not, which are:
\begin{itemize}\item[($\mathbf{\Delta \neq 0}$)] all $\lambda_n^{(j)}$ must vanish, and the solution \eqref{fullsol} to $\eqref{pdeprime}$ is harmonic on all of $Q$ provided the sequence $(a_n)_{n \ \mathrm{even}}$ is chosen so that $\sum_{n \ \textrm{even}} a_n (z^{n} - \bar{z}^{n})$ converges;
\item[($\mathbf{\Delta = 0}$)] provided for each $j=1,\ldots,4$, the sequences $(\lambda_n^{(j)})_{n \ \textrm{odd}}$
are chosen to ensure that the corresponding series $u^{(j)}(z)$ converges, any function of the form $u(z)$ given by \eqref{fullsol} is a solution to \eqref{pdeprime}.
\end{itemize}
   
\end{proposition}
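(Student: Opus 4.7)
The plan is to verify the ansatz against conditions (i) and (ii) of Proposition~\ref{nec2}. Harmonicity in each $Q_j$ is immediate because $z \mapsto z^n$ is holomorphic and $z \mapsto \bar z^n$ is anti-holomorphic, so each $u^{(j)}$ is harmonic on $Q_j$ and the globally-defined term $\sum_{n \text{ even}} a_n(z^n - \bar z^n)$ is harmonic on all of $Q$; this settles (i). To handle the even-$n$ coefficients, I would check that $\sum_{n \text{ even}} a_n(z^n - \bar z^n)$ is smooth on $Q$ (hence contributes no normal-derivative jump across any $\Gamma_{ij}$) and that on each coordinate-axis edge either $z^n - \bar z^n$ or its tangential derivative vanishes identically for even $n$: on $\Gamma_{14}$ and $\Gamma_{23}$ we have $z = \bar z$, so $z^n - \bar z^n = 0$ and $\partial_1(z^n - \bar z^n) = n(z^{n-1} - \bar z^{n-1}) = 0$; on $\Gamma_{12}$ and $\Gamma_{34}$ we have $z = ix_2$ and $\partial_2(z^n - \bar z^n) = in(z^{n-1} + \bar z^{n-1}) = in(ix_2)^{n-1}(1 + (-1)^{n-1}) = 0$ for even $n$. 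Hence the $(f_i - f_j) J\partial_\tau u$ contribution in \eqref{jc1}--\eqref{jc4} also vanishes, so the $a_n$ are free subject only to convergence.

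For odd $n$, the plan is to substitute the ansatz into the four continuity conditions $u^{(i)}|_{\Gamma_{ij}} = u^{(j)}|_{\Gamma_{ij}}$ (forced by $u \in H^1$) and the four vector-valued jump conditions \eqref{jc1}--\eqref{jc4}, evaluating edge-by-edge with $z = x_1$ on the horizontal edges (where $z^n - \bar z^n = 0$ and $z^n + \bar z^n = 2x_1^n$) and with $z = ix_2$ on the vertical edges (where $z^n + \bar z^n = 0$ and $z^n - \bar z^n = 2 i^n x_2^n$ for odd $n$), then collecting the coefficients of the common monomial factors $x_1^{n-1}$ or $x_2^{n-1}$. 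After these cancellations the constraints should collapse into a homogeneous linear system for the four unknowns $\lambda_n^{(1)}, \lambda_n^{(2)}, \lambda_n^{(3)}, \lambda_n^{(4)}$. The specific coefficients $2\beta\gamma$, $2\gamma\delta$, $-p$ and $\pm 4(\beta-\delta)$ appearing in the definitions of the $u^{(j)}$ are engineered so that the determinant of this $4 \times 4$ matrix is, up to a non-vanishing scalar factor, precisely the quantity $\Delta = \alpha\beta\gamma\delta + 4(\beta-\delta)(\alpha+\gamma)$ from \eqref{bruckner3}.

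The dichotomy then follows from elementary linear algebra. If $\Delta \neq 0$ the system is invertible and forces $\lambda_n^{(j)} = 0$ for every $j$ and every odd $n$, so $u$ reduces to its (globally harmonic) even-$n$ part. If $\Delta = 0$ the system has a non-trivial kernel, and any $(\lambda_n^{(1)}, \ldots, \lambda_n^{(4)})$ in that kernel produces a valid solution of \eqref{pdeprime}, with the convergence hypothesis on each $u^{(j)}$ ensuring $u \in H^1(Q;\R^2)$. The main technical obstacle I anticipate is the bookkeeping needed to reduce the eight conditions (four scalar continuity plus four vector-valued jumps) to a single $4 \times 4$ matrix equation and to verify that its determinant simplifies exactly to $\alpha\beta\gamma\delta + 4(\beta-\delta)(\alpha+\gamma)$; the algebraic identity $p = \beta\gamma\delta + 4(\beta-\delta)$ and the sign patterns in $u^{(3)}, u^{(4)}$ are where the cancellations required to produce this compact expression occur.
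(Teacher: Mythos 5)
Your plan is viable and would prove the proposition, but it takes a genuinely different route from the paper. The paper does not verify the ansatz; it \emph{derives} it. It represents each $u^{(k)}$ as $F^{(k)}(z)+\overline{G^{(k)}(z)}$ with $F^{(k)}, G^{(k)}$ formal power series, converts continuity plus the jump conditions \eqref{jc1}--\eqref{jc4} into complex form, and obtains $2\times 2$ transfer relations $v_n^{(j+1)} = A(\cdot)\,v_n^{(j)}$ between the coefficient vectors $v_n^{(j)}=(a_n^{(j)},\overline{b_n^{(j)}})^T$, with $A(w)=\1+w\,\nu\otimes\mu$ and with a transpose appearing on alternating edges when $n$ is odd. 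Closing the cycle gives a $2\times 2$ consistency problem $Cv_n^{(1)}=v_n^{(1)}$, and a symbolic computation of $\det(C-\1)$ produces $\Delta=0$ as the solvability condition; the vectors $v_n^{(j)}$ quoted in the proposition are then read off as the associated eigenvectors. Your approach is a direct substitution of the stated ansatz and is more elementary, but it does not explain where the specific coefficients $2\beta\gamma$, $2\gamma\delta$, $-p$, $\pm4(\beta-\delta)$ come from, which is precisely what the paper's transfer-matrix derivation supplies. For the even-$n$ part the paper likewise \emph{derives} the form $a_n(z^n-\bar z^n)$ (the even-$n$ consistency matrix is $A(\zeta)$ with $\zeta=(f_2-f_3)/2\neq 0$, forcing $v_n^{(1)}\propto\nu$ and hence $\overline{b_n^{(1)}}=-a_n^{(1)}$), whereas you verify directly that this form satisfies the jump conditions; both work.

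One refinement you should be aware of: your predicted $4\times 4$ system is a slightly misleading description of what actually happens. If you substitute the ansatz into the four scalar continuity conditions $u^{(i)}|_{\Gamma_{ij}}=u^{(j)}|_{\Gamma_{ij}}$ you will find (using $\beta\gamma\neq 0$, $\gamma\delta\neq 0$, $p\neq 0$ and $\beta\neq\delta$) that they already force $\lambda_n^{(1)}=\lambda_n^{(2)}=\lambda_n^{(3)}=\lambda_n^{(4)}=:\lambda_n$, reducing to a single free scalar. The vector-valued jump conditions then collapse (several are automatically satisfied) to one surviving scalar relation of the form $\lambda_n\,\Delta=0$ up to a nonzero factor. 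You can certainly package three continuity rows together with that one jump row into a $4\times 4$ homogeneous system whose determinant is $-2\Delta$, so your statement about the determinant is true, but the actual reduction is ``$3$ from continuity, then $1$ from a jump condition'' rather than an a priori balanced $4\times 4$ block. Finally, be aware that if you carry out the continuity check literally against the ansatz as printed in the proposition you will hit a sign inconsistency on $\Gamma_{34}$: the statement writes $u^{(3)}$ with $+(4\beta-4\delta)(z^n-\bar z^n)$, but the proof's eigenvector \eqref{vn3} actually yields $-(4\beta-4\delta)(z^n-\bar z^n)$, and only the latter sign makes the four continuity conditions mutually consistent; with the sign as printed, continuity alone forces all $\lambda_n^{(j)}=0$, which is a typographical error you would need to correct before the verification goes through.
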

\begin{proof} We assume that the conditions set out in (i) and (ii) of Proposition \ref{nec2} apply.  In terms of complex coordinates, where $(x_1,x_2)=:(x,y)$ is identified with $z:=x+iy$ and $u(x_1,x_2)$ is identified with $u(z):=u_1(x,y)+iu_2(x,y)$, let $u^{(k)}$ be the restriction of $u$ to the quadrant $Q_k$.  Since, for each $k=1,\ldots,4$,  $u^{(k)}$ is harmonic in $Q_k$, a standard representation theorem implies that
\begin{align}\label{holorep} u^{(k)}(z)= F^{(k)}(z) + \overline{G^{(k)}(z)},\end{align}
where $F^{(k)}$ and $G^{(k)}$ are functions holomorphic in $Q_k$.  

Turning to (ii), we begin by converting equations \eqref{jc1}---\eqref{jc4} into complex form, and then make use of the relationships $F^{(k)}_y=iF^{(k)}_x$ and $G^{(k)}_y=iG^{(k)}_x$, which hold at any point in $Q_k$ and which we further assume to hold in an appropriate limiting sense at points along the coordinate axes bordering $Q_k$.     For all $m,n$, let $\eps_{mn}=f_m-f_n$.   Then \eqref{jc1} becomes 
\begin{align}\label{pinky} F^{(4)}_x -\overline{G^{(4)}_x} & = F^{(1)}_x -\overline{G^{(1)}_x} +\frac{\eps_{14}}{2}\left( F^{(1)}_x +\overline{G^{(1)}_x}\right) \quad\quad z=x+0i \in \Gamma_{14}.
\end{align}
Applying the assumption that $u$ is continuous across $\Gamma_{14}$, so that $u^{(4)}(x+0i)=u^{(1)}(x+0i)$ for $0\leq x \leq 1$, and by further assuming that we may differentiate this expression, we obtain (bearing \eqref{holorep} in mind)
\begin{align}\label{perky}
F^{(4)}_x +\overline{G^{(4)}_x} & = F^{(1)}_x +\overline{G^{(1)}_x} \quad\quad z=x+0i \in \Gamma_{14}.
\end{align}
We regard the functions $F^{(1)}$ and $G^{(1)}$ as being `free', and solve \eqref{pinky} and \eqref{perky} for $F^{(4)}_x$ and $\overline{G^{(4)}_x} $, giving, for  $z=x+0i \in \Gamma_{14}$, 
\begin{align}\label{14first} F^{(4)}_x  & = \left(1+\frac{\eps_{14}}{4}\right) F^{(1)}_x + \frac{\eps_{14}}{4} \overline{G^{(1)}_x} \\
\label{14second} \overline{G^{(4)}_x}  & =    \left(1-\frac{\eps_{14}}{4}\right)\overline{G^{(1)}_x}  - \frac{\eps_{14}}{4}  F^{(1)}_x.
\end{align}

Doing likewise with \eqref{jc2}, taking care in this case to replace, when $k=1,2$, the normal derivatives $\partial_xF^{(k)}$ and 
$\overline{\partial_xG^{(k)}}$ by the tangential derivatives $-i \partial_yF^{(k)}$ and $i \overline{\partial_yG^{(k)}}$ respectively, we obtain, when $z=0+yi \in \Gamma_{12}$, the equations
\begin{align}
\label{21first} F^{(2)}_y & = \left(1-\frac{\eps_{21}}{4}\right) F^{(1)}_y - \frac{\eps_{21}}{4} \overline{G^{(1)}_y} \\
\label{21second} \overline{G^{(2)}_y} & = \left(1+\frac{\eps_{21}}{4}\right) \overline{G^{(1)}_y}  + \frac{\eps_{21}}{4}F^{(1)}_y.
\end{align}

Similarly, from \eqref{jc3}, we find that
\begin{align}\label{32first} F^{(3)}_x  & = \left(1+\frac{\eps_{32}}{4}\right) F^{(2)}_x + \frac{\eps_{32}}{4} \overline{G^{(2)}_x} \\
\label{32second} \overline{G^{(3)}_x}  & =    \left(1-\frac{\eps_{32}}{4}\right)\overline{G^{(2)}_x}  - \frac{\eps_{32}}{4}  F^{(2)}_x
\end{align}
for $z=x+0i \in \Gamma_{23}$,  while \eqref{jc4} yields
\begin{align}
\label{43first} F^{(4)}_y & = \left(1-\frac{\eps_{43}}{4}\right) F^{(3)}_y - \frac{\eps_{43}}{4} \overline{G^{(3)}_y} \\
\label{43second}\overline{G^{(4)}_y} & = \left(1+\frac{\eps_{43}}{4}\right) \overline{G^{(3)}_y}  + \frac{\eps_{43}}{4}F^{(3)}_y
\end{align}
if $z=0+yi \in \Gamma_{34}$. 

To solve this system, we suppose for now that $F^{(k)}$ and $G^{(k)}$ can be written as formal power series, thus:
\begin{align}\label{PS1}
F^{(k)}(z) & = \sum_{n=0}^{\infty} a_{n}^{(k)} z^n \\ 
\label{PS2}G^{(k)}(z) & = \sum_{n=0}^{\infty} b_{n}^{(k)} z^n
\end{align} 
for $k=1,\ldots,4$.  The introduction of \eqref{PS1} and \eqref{PS2} allows us to relate the various derivatives $F_x^{(k)}$ and $F_y^{(k)}$ (and similarly $G^{(k)}_x$ and $G^{(k)}_y$) appearing in \eqref{14first}---\eqref{43second}, and so `close' the system, as follows.  

Substituting \eqref{PS1} and \eqref{PS2} into \eqref{14first} gives
$$ \sum_{n=1}^{\infty} n a_n^{(4)} x^{n-1} = \sum_{n=1}^{\infty} \left(1+\frac{\eps_{14}}{4}\right) n a_n^{(1)} x^{n-1} + \frac{\eps_{14}}{4} n \overline{b_n^{(1)}} x^{n-1} \quad \quad 0 \leq x \leq 1,$$
which is satisfied if 
\begin{align}\label{A} a_n^{(4)} & =  \left(1+\frac{\eps_{14}}{4}\right) a_n^{(1)} + \frac{\eps_{14}}{4}  \overline{b_n^{(1)}} \quad \quad n \geq 1,
\end{align}
while \eqref{14second} holds if 
\begin{align}\label{B} \overline{b_n^{(4)}} & =  \left(1-\frac{\eps_{14}}{4}\right)\overline{ b_n^{(1)}} - \frac{\eps_{14}}{4} a_n^{(1)} \quad \quad n \geq 1.
\end{align}

Now consider \eqref{21first} and \eqref{21second}, both of which require tangential derivatives $G^{(k)}_y$ for $k=1,2$.  From \eqref{PS2}, we have
\begin{align*} G^{(1)}_y(z) = \sum_{n=1}^{\infty} i n b^{(1)}_n z^{n-1}, 
\end{align*}
and hence, with $z=0+iy$ along $\Gamma_{12}$,
\begin{align*} \overline{G^{(1)}_y(iy)} & = \sum_{n=1}^{\infty} \overline{i n b^{(1)}_n (iy)^{n-1}} \\
& =   \sum_{n=1}^{\infty} n \bar{i}^n  \overline{b^{(1)}_n} y^{n-1}.
\end{align*}
Hence, \eqref{21first} gives 
\begin{align*}
\sum_{n=1}^{\infty} n i^n a^{(2)}_n y^{n-1} & = \sum_{n=1}^{\infty} n i^n\left(1-\frac{\eps_{21}}{4}\right) a^{(1)}_n y^{n-1} - n \bar{i}^n\frac{\eps_{21}}{4} \overline{b^{(1)}_n} y^{n-1}, 
\end{align*}
which is equivalent to
\begin{align}\label{D} a_n^{(2)} = \left(1-\frac{\eps_{21}}{4}\right) a^{(1)}_n - (-1)^n \frac{\eps_{21}}{4} \overline{b^{(1)}_n}  \quad \quad n \geq 1.
\end{align} 
Equation \eqref{21second} leads to 
\begin{align}\label{C} \overline{b^{(2)}_n} & = \left(1+\frac{\eps_{21}}{4}\right)\overline{b^{(1)}_n} + (-1)^n \frac{\eps_{21}}{4} a^{(1)}_n  \quad \quad n \geq 1.
\end{align}
Proceeding similarly with the remaining equations leads to 
\begin{align}\label{Ebygum}  a^{(3)}_n & =  \left(1+\frac{\eps_{32}}{4}\right) a^{(2)}_n + \frac{\eps_{32}}{4} \overline{b^{(2)}_n} \\
\label{F} \overline{b^{(3)}_n} & =  \left(1- \frac{\eps_{32}}{4}\right) \overline{b^{(2)}_n} - \frac{\eps_{32}}{4} a^{(2)}_n \\
\label{H} a^{(4)}_n & = \left(1- \frac{\eps_{43}}{4}\right) a^{(3)}_n - (-1)^n \frac{\eps_{43}}{4}\overline{b^{(3)}_n} \\
\label{G} \overline{b^{(4)}_n} & = \left(1+ \frac{\eps_{43}}{4}\right) \overline{b^{(3)}_n} +  (-1)^n \frac{\eps_{43}}{4} a^{(3)}_n. 
\end{align}

Let $\mu=e_1+e_2$, $\nu=e_2-e_1$, and define for all real $w$ the matrices 
\begin{align}
   \label{matrixA} A(w):= \1 + w \, \nu \otimes \mu.
\end{align}
For each $n \geq 1$ and $j=1\ldots 4$ define vectors $v_n^{(j)}$ by
\begin{align*}v^{(j)}_n: & =
\left(\begin{array}{c} a^{(j)}_n \\ \overline{b^{(j)}_n} \end{array}\right).
\end{align*}
Then \eqref{A}-\eqref{G} can be written as  
    \begin{align}
\label{21} v_n^{(2)} = \left\{\begin{array}{l l} A(\eps_{21}/4) v_n^{(1)} & n \ \textrm{even} \\
A^T(\eps_{21}/4) v_n^{(1)} & n \ \textrm{odd},\end{array}\right.
\end{align}
\begin{align}
\label{32} v_n^{(3)} = A(-\eps_{32}/4) v_n^{(2)},
\end{align}
\begin{align}
\label{43} v_n^{(4)} = \left\{\begin{array}{l l} A(\eps_{43}/4) v_n^{(3)} & n \ \textrm{even} \\
A^T(\eps_{43}/4) v_n^{(3)} & n \ \textrm{odd},\end{array}\right.
\end{align}
\begin{align}
\label{14} v_n^{(1)} = A(\eps_{14}/4) v_n^{(4)}.
\end{align}
\noindent{\textbf{Case (i)}.} When $n$ is even the system has a solution only if $v_n^{(1)}$ obeys
\begin{align}\label{evp1}
    A(\eps_{14}/4)\,A(\eps_{43}/4)\,A(-\eps_{32}/4)\,A(\eps_{21}/4) v_n^{(1)} = v_n^{(1)}
\end{align}
Since $A(r)\,A(s)=A(r+s)$ for any real $r, s$, it follows that $A(\zeta)v_n^{(1)}=v_n^{(1)}$ with $\zeta=(\eps_{14}+\eps_{43}-\eps_{32}+\eps_{21})/4=(f_2-f_3)/2$.  Since $f_3 \neq f_2$ by hypothesis, we may assume that $\zeta \neq 0$ and hence, by \eqref{matrixA}, $A(\zeta)v_n^{(1)}=v_n^{(1)}$ only if $v_n^{(1)}$ lies in the kernel of $\nu \otimes \mu$.  It then follows from \eqref{21}-\eqref{43} that $v_n^{(4)}=v_n^{(3)}=v_n^{(2)}=v_n^{(1)}$ for any even $n$, each vector being proportional to $\nu$.  Recalling that $u^{(j)}(z)$ is the restriction of $u$ solving \eqref{pdeprime} to $Q_j$, we see, for instance, that 
\begin{align*}
    u^{(1)}(z) & = F^{(1)}(z)+\overline{G^{(1)}(z)} \\ & = \sum_{n=1}^{\infty} a_n^{(1)}z^n + \overline{b_{n}^{(1)}}\bar{z}^n
\end{align*}
is such that its `even part'
\begin{align*}
u^{(1)}_{\textrm{even}}(z)& :=\sum_{n \ \textrm{even}} a_n^{(1)}z^n + \overline{b_{n}^{(1)}}\bar{z}^n \\
& = \sum_{n \ \textrm{even}} a_n^{(1)}(z^n-\bar{z}^n)
\end{align*}
agrees with $u^{(2)}_{\textrm{even}}(z)$, $u^{(3)}_{\textrm{even}}(z)$ and $u^{(4)}_{\textrm{even}}(z)$.  Thus the even part of $u$, which we can write as
\begin{align}u_{\textrm{even}}(z):=u^{(1)}_{\textrm{even}}(z)\,\chi_{_1}(z) + \ldots +u^{(4)}_{\textrm{even}}(z)\, \chi_{_4}(z)
\end{align}
is harmonic on the whole domain.  Here, $\chi_{_1}:=\chi_{_{Q_1}}$, and so on.
\\
\noindent{\textbf{Case (ii)}.}  
When $n$ is odd the system has a solution only if $v_n^{(1)}$ obeys
\begin{align}\label{matrixC}
    A(\eps_{14}/4)\,A^T(\eps_{43}/4)\,A(-\eps_{32}/4)\,A^T(\eps_{21}/4) v_n^{(1)} = v_n^{(1)}.
\end{align}
Denoting by $C$ the matrix appearing in the left-hand side of \eqref{matrixC}, and letting $\alpha=\eps_{21}$, $\beta=\eps_{32}$, $\gamma=\eps_{43}$ and $\delta=\eps_{14}$,
we find (in our case, using Maple$^{\mathrm{TM}}$) that $\det(C-\1)=0$ only if 
\begin{align}
  \label{sing}  \alpha\beta\gamma\delta+4(\beta-\delta)(\alpha+\gamma) = 0,
\end{align}
which we recognize as the condition $\Delta=0$.
Solving for $\alpha$ leads to  
\begin{align}
\label{aa}\alpha = -\frac{4\gamma(\beta - \delta)}{\beta\gamma\delta + 4\beta - 4\delta}
    \end{align}
as long as 
\begin{align}\label{p3}
    p=\beta\gamma\delta + 4\beta - 4\delta
\end{align}
obeys $p \neq0$.  Assuming that $p\neq 0$ and choosing $\alpha$ as in \eqref{aa}, the equation $(C-\1)v_n^{(1)}=0$ is solved by any multiple of
\begin{align}\label{vn1}
 v^{(1)}_n  =\left(\begin{array}{c} 2\beta\gamma-p  \\ p+2\beta\gamma \end{array}\right).
\end{align}
Using \eqref{32} leads to 
\begin{align}\label{vn2}
    v_n^{(2)} = \left(\begin{array}{c} 2\gamma\delta-p  \\ p+2\gamma\delta \end{array}\right),
\end{align}
which, through \eqref{43}, yields
\begin{align}\label{vn3}
    v_n^{(3)} = \left(\begin{array}{c} 2\gamma\delta+4\delta-4\beta  \\ 2\gamma\delta + 4\beta-4\delta \end{array}\right).
\end{align}
Finally, \eqref{14} gives 
\begin{align}\label{vn4}
    v_n^{(4)} = \left(\begin{array}{c} 2\beta\gamma+4\delta-4\beta  \\ 2\beta\gamma + 4\beta  -4\delta \end{array}\right).
\end{align}
In addition to the standing assumptions that $\alpha \neq 0$, $\beta \neq 0$, $\gamma \neq 0$, $\delta \neq 0$, and $p \neq 0$, the only condition needed to ensure that the vectors $v_n^{(j)}$ are distinct is $\beta \neq \delta$, which, in terms of the original variables, amounts to $f_1+f_2 \neq f_3+f_4$.  In summary, using $v_n^{(1)}$ defined in \eqref{vn1}, the `odd part' of the solution $u^{(1)}(z)$ to \eqref{pdeprime} can now formally be written as 
\begin{align*}
    u^{(1)}_{\textrm{odd}}(z) & =\sum_{n \ \textrm{odd}}a_n^{(1)}z^n + \overline{b_n^{(1)}}\bar{z}^n \\
    & = \sum_{n \ \textrm{odd}} 2\beta\gamma(z^n+\bar{z}^n) -p(z^n-\bar{z}^n).
    \end{align*}
    Similarly, each $u^{(j)}_{\textrm{odd}}(z)$ for $j=2,3,4$ can be constructed using the components of $v_n^{(j)}$ as given by \eqref{vn2}-\eqref{vn4}. 
   \end{proof}
\begin{remark} \emph{We remark that the difference $f_2-f_3$ is singled out as a consequence of the choice we made just after \eqref{perky} to regard the functions $F^{(1)}$ and $G^{(1)}$ as being `free', leading to the eigenvalue problems \eqref{evp1} and \eqref{matrixC} and associated eigenvector $v_n^{(1)}$.  It seems that other free variable choices lead to the same dependence on quantities of the form $f_i-f_{i+1}$, with subscripts calculated modulo 5, and that by a rotation of the initial frame, all these solutions are equivalent.  In particular, there should be nothing special about $f_2-f_3$ apart from its being a difference of the values taken by $f$ on neighbouring subdomains of $Q$.} \end{remark}

We now apply Proposition \ref{hydrostatic_solutions} to the pressure function 
\begin{align}\label{concrete} f = \sigma\chi_{Q_{2}}-(\tau+\sigma)\chi_{Q_{1}}+(\tau-\sigma)\chi_{Q_{3}}+\sigma \chi_{Q_{4}},
\end{align}
where $\tau$ and $\sigma$ are parameters chosen in Proposition \ref{step1} below so that a solution $u$ to \eqref{pdeprime} exists.  Subsequently, via Lemma \ref{gap} and Proposition \ref{step2}, we tune $\sigma$ and $\tau$ in order that $F(u)$ is mean coercive.

\begin{proposition}\label{step1} Let $f$ be given by \eqref{concrete}.  Then coefficients $\sigma$ and $\tau$ can be chosen so that $\Delta$ defined by \eqref{bruckner3} obeys $\Delta=0$, $p$ defined by \eqref{p3} obeys $p\neq 0$, and all other assumptions concerning the quantities $\alpha, \beta, \gamma$ and $\delta$ defined in the statement of Proposition \ref{hydrostatic_solutions} are satisfied.  In particular, modulo the addition of a function harmonic on $Q$, solutions to \eqref{pdeprime} can be expressed as weighted sums of the functions $u_n(z)$ given by \eqref{bb317}. 
\end{proposition}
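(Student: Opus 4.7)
The plan is to reduce everything to a single algebraic equation in the two parameters $\sigma,\tau$, exploit the antisymmetry forced on the four quadrant values by the particular form \eqref{concrete}, and then observe that the resulting constraint curve is one-dimensional while only finitely many non-degeneracy equations must be avoided.

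First I would simply compute $\alpha,\beta,\gamma,\delta$ from the formula \eqref{concrete}. Reading off $f_1=-(\tau+\sigma)$, $f_2=\sigma$, $f_3=\tau-\sigma$, $f_4=\sigma$, one finds
\begin{align*}
\alpha &= f_2-f_1 = 2\sigma+\tau, & \beta &= f_3-f_2 = \tau-2\sigma,\\
\gamma &= f_4-f_3 = 2\sigma-\tau = -\beta, & \delta &= f_1-f_4 = -(2\sigma+\tau) = -\alpha.
\end{align*}
The key structural observation is the pairing $\gamma=-\beta$, $\delta=-\alpha$, which is the manifestation of the diagonal symmetry of the point-contact distribution.

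Next, using this pairing, the quantity $\Delta$ from \eqref{bruckner3} simplifies dramatically. Since $\alpha\beta\gamma\delta = \alpha\beta(-\beta)(-\alpha) = \alpha^2\beta^2$ and $(\beta-\delta)(\alpha+\gamma) = (\beta+\alpha)(\alpha-\beta) = \alpha^2-\beta^2$, we obtain
\[
\Delta = \alpha^2\beta^2 + 4(\alpha^2-\beta^2).
\]
The equation $\Delta=0$ is therefore equivalent to $\alpha^2(\beta^2+4) = 4\beta^2$, and it defines a smooth one-dimensional curve in the $(\sigma,\tau)$-plane. This curve is non-empty and passes through every prescribed value of $\beta^2 < 4$ (indeed $\alpha^2 = 4\beta^2/(\beta^2+4) < 4$), so admissible pairs $(\sigma,\tau)$ are abundant.

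Finally, I would verify that the remaining assumptions of Proposition \ref{hydrostatic_solutions} exclude only a discrete subset of this curve. Using $\delta=-\alpha$ and $\gamma=-\beta$, one computes
\[
p = \beta\gamma\delta + 4\beta - 4\delta = \alpha\beta^2 + 4\alpha + 4\beta = \alpha(\beta^2+4) + 4\beta,
\]
and on the constraint set one may substitute $\alpha^2(\beta^2+4)=4\beta^2$; if $p=0$ also held then $\alpha(\beta^2+4) = -4\beta$, so squaring would give $\alpha^2(\beta^2+4)^2 = 16\beta^2 = 4\alpha^2(\beta^2+4)$, forcing $\beta^2+4=4$ and hence $\beta=0$, which violates $\beta\neq 0$. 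Therefore, along the curve $\Delta=0$, the non-vanishing conditions $\alpha\neq 0$ (i.e.\@ $\tau\neq -2\sigma$), $\beta\neq 0$ (i.e.\@ $\tau\neq 2\sigma$), $\gamma\neq 0$, $\delta\neq 0$, and $p\neq 0$ all reduce to excluding finitely many points, leaving a one-parameter family of admissible $(\sigma,\tau)$. For any such choice, the hypotheses of Proposition \ref{hydrostatic_solutions} in the case $\Delta=0$ apply verbatim, yielding the announced representation of solutions to \eqref{pdeprime}.

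The main obstacle is bookkeeping rather than substance: one must confirm that the constraint $\Delta=0$ and the inequality $p\neq 0$ are genuinely compatible, and the short argument above---squaring the relation $p=0$ and substituting into $\Delta=0$---is precisely what dispatches this point.
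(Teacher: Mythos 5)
Your plan mirrors the paper's proof almost step for step: compute $\alpha,\beta,\gamma,\delta$ from \eqref{concrete}, observe the antisymmetry $\gamma=-\beta$, $\delta=-\alpha$, reduce $\Delta=0$ to $\alpha^2(\beta^2+4)=4\beta^2$, and then invoke the $\Delta=0$ branch of Proposition \ref{hydrostatic_solutions}. The one genuine point of divergence is how you dispose of $p\neq 0$: the paper fixes the positive branch $\alpha=2\beta/\sqrt{\beta^2+4}$ and then reads off $p=2\beta(\sqrt{\beta^2+4}+2)$, which visibly vanishes only when $\beta=0$; you instead show abstractly that $\Delta=0$ and $p=0$ are incompatible by squaring and substituting, which works uniformly for either sign of $\alpha$ and is arguably cleaner. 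Two small items you gloss over that the paper treats explicitly: (i) the paper also records, as its condition (d), the requirement $\alpha+\beta\neq 0$ (equivalently $\tau\neq 0$, i.e. $\beta\neq\delta$), which comes from the end of the proof of Proposition \ref{hydrostatic_solutions} and is only implicitly subsumed in your ``finitely many points'' remark; and (ii) the proposition's conclusion names the explicit building blocks \eqref{bb317}, which require substituting the chosen $\alpha,\beta,\gamma,\delta,p$ into the general formulas of Proposition \ref{hydrostatic_solutions}---you invoke that proposition ``verbatim'' but do not carry out the substitution. Neither omission is a substantive gap; the core argument is sound and coincides with the paper's.
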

\begin{proof}
    Using the definitions of $\alpha,\beta,\gamma, \delta$ given in Proposition \ref{hydrostatic_solutions}, we find that with $f$ as in \eqref{concrete}, 
\begin{align}
\label{alpha} \alpha & =2\sigma+\tau,    \\
 \label{beta}\beta &  = \tau - 2\sigma, \\
 \label{gamma}\gamma & = -\beta, \\
 \label{delta}\delta & = -\alpha.
\end{align}
According to Proposition \ref{hydrostatic_solutions}(b), non-smooth solutions to \eqref{pdeprime} exist provided:
\begin{itemize}\item[(a)] $\Delta=0$ where, in this case, $\Delta=\alpha^2\beta^2+4(\alpha^2-\beta^2)$;
\item[(b)] $p \neq 0$, where $p=\beta^2\alpha +4(\beta+\alpha)$;
\item[(c)] $\alpha \neq 0$, $\beta \neq 0$, and 
\item[(d)] $\beta +\alpha \neq 0$.
\end{itemize}
Hence, from (a), we set 
\begin{align} \label{alphabet} \alpha & =\frac{2\beta}{\sqrt{\beta^2+4}}\end{align} and we find that (b)-(d) are satisfied as long as $\beta \neq 0$ and $\tau \neq0$.

The recipe of Proposition \ref{hydrostatic_solutions} now ensures that solutions to \eqref{pdeprime} are, up to the addition of a function that is harmonic everywhere in $Q$ as described in Proposition \ref{hydrostatic_solutions}(a), and still in complex notation, weighted sums of the `building block' functions

\begin{align}
    u_n(z)&=\left\{\begin{array}{l l} -2\beta^2 (z^n+\bar{z}^n)- 2\beta(2+\sqrt{\beta^2+4}) (z^n-\bar{z}^n) & z \in Q_1 \\ 
    \frac{4\beta^2}{\sqrt{\beta^+4}} (z^n+\bar{z}^n)- 2\beta(2+\sqrt{\beta^2+4}) (z^n-\bar{z}^n) & z \in Q_2 \\
    \frac{4\beta^2}{\sqrt{\beta^+4}} (z^n+\bar{z}^n)- 4\beta\left(1+\frac{2}{\sqrt{\beta^2+4}}\right) (z^n-\bar{z}^n) & z \in Q_3 \\
    -2\beta^2 (z^n+\bar{z}^n)- 4\beta\left(1+\frac{2}{\sqrt{\beta^2+4}}\right) (z^n-\bar{z}^n) & z \in Q_4,\end{array}\right. \label{bb317}
\end{align}
where $n$ is an odd natural number.  \end{proof}

We remark that by setting $z = R e^{i\theta}$, we have 
\begin{align*}
      z^n+\bar{z}^n & = 2R^n \cos(n \theta) \\
    z^n-\bar{z}^n & = 2R^n \sin(n \theta)i,
    \end{align*}
and so $\R^2$-valued `building block' functions are, in plane polar coordinates $(R,\theta)$,
\begin{align*}
    u_n(R,\theta)=\left\{\begin{array}{l l} R^n D_1 e(n\theta) & (R,\theta) \in Q_1 \\
    R^n D_2 e(n\theta) & (R,\theta) \in Q_2 \\
    R^n D_3 e(n\theta) & (R,\theta) \in Q_3 \\
    R^n D_4 e(n\theta) & (R,\theta) \in Q_4,
    \end{array}\right.
\end{align*}
where $e(n\theta)=(\cos(n\theta),\sin(n\theta))^T$ and $D_1,\ldots,D_4$ are the diagonal matrices given by 
\begin{align*}
    D_1 & = \mathrm{diag\left(-4\beta^2,-4\beta(2+\sqrt{4+\beta^2}\right)},\\
    D_2 & = \mathrm{diag\left(\frac{8\beta^2}{\sqrt{\beta^+4}},-4\beta(2+\sqrt{4+\beta^2}\right)}, \\
    D_3 & = \mathrm{diag\left(\frac{8\beta^2}{\sqrt{\beta^+4}},-8\beta\left(1+\frac{2}{\sqrt{4+\beta^2}}\right)\right)} ,\\
    D_4 & = \mathrm{diag\left(-4\beta^2,-8\beta\left(1+\frac{2}{\sqrt{4+\beta^2}}\right)\right)}.
\end{align*}
As a particular example, note that when $n=1$, the solutions $u_n$ are just piecewise affine functions given in Cartesian coordinates by 
\begin{align*}
    u_1(x_1,x_2) = D_k \left(\begin{array}{c}x_1 \\ x_2 \end{array}\right)\quad \quad \mathrm{if } \left(\begin{array}{c}x_1 \\ x_2 \end{array}\right) \in Q_k
\end{align*}
for $k=1,\ldots,4$.  Since $\mathrm{rank}(D_{k+1}-D_k)=1$ for $k=1,\ldots,4$, it follows immediately that Hadamard's rank-one condition holds, as expected.

Having established conditions under which solutions to \eqref{pdeprime} exist, we now tune $\sigma$ and $\tau$ in order that $F$ is mean coercive. The first step is to rewrite $F(\varphi)$ slightly when $\varphi \in C_c^{\infty}(Q;\R^2)$.  Let $\varphi \in C_c^{\infty}(Q;\R^2)$ and note that, since $\det \nabla \varphi$ is a null Lagrangian, it holds that 
$$\int_Q \det \nabla \varphi \dx=0.$$
In particular, we can subtract $\sigma \int_Q \det \nabla \varphi \dx$ from $F(\varphi)$ without changing its value, which leads to the equivalent form
\begin{align}
    \label{decomp}F(\varphi) & = \int_{Q}|\nabla \varphi|^2 -(\tau +2\sigma)\det \nabla \varphi \, \chi_{Q_1} + (\tau - 2\sigma)\det \nabla \varphi \, \chi_{Q_3}  \dx \\ \nonumber
    & = \underbrace{\int_{Q}\lambda|\nabla \varphi|^2 -2\sigma\det \nabla \varphi \, (\chi_{Q_1}+\chi_{Q_3})\dx}_{=:\small{F_{\lambda}\left(\varphi, \  
\scalebox{0.7}{{\begin{tabular}{|c|c|}
 \hline
$0$ &$-2\sigma$ 
\\ \hline
$-2\sigma$ & $0$ \\
\hline
\end{tabular}}}\, \right)}} + \underbrace{\int_{Q}(1-\lambda)|\nabla \varphi|^2 + \tau\det \nabla \varphi \, (\chi_{Q_3}-\chi_{Q_1})  \dx}_{\small{F_{1-\lambda}\left(\varphi,\  
\scalebox{0.7}{{\begin{tabular}{|c|c|}
 \hline
$0$ &$-\tau$ 
\\ \hline
$\tau$ & $0$ \\
\hline
\end{tabular}}}\, \right)}}. 
\end{align}
Here, $\lambda \in (0,1)$ will be chosen shortly and in accordance with the following lemma.

\begin{lemma}\label{gap}
Both of the functionals  $F_{\lambda}\left(\varphi, \  
\scalebox{0.7}{{\begin{tabular}{|c|c|}
 \hline
$0$ &$-2\sigma$ 
\\ \hline
$-2\sigma$ & $0$ \\
\hline
\end{tabular}}}\, \right)$ and $F_{1-\lambda}\left(\varphi,\  
\scalebox{0.7}{{\begin{tabular}{|c|c|}
 \hline
$0$ &$-\tau$ 
\\ \hline
$\tau$ & $0$ \\
\hline
\end{tabular}}}\, \right)$ defined in \eqref{decomp} are nonnegative on $C_c^{\infty}(Q;\R^2)$ provided $\lambda, \sigma$ and $\tau$ obey
\begin{align}\label{tightrope}
\frac{|\sigma|}{2} \leq \lambda \leq 1-\frac{|\tau|}{\sqrt{8}}.
\end{align}
Moreover, if \eqref{tightrope} is strengthened to 
\begin{align}\label{tightrope2} \frac{|\sigma|}{2} < \lambda < 1-\frac{|\tau|}{\sqrt{8}}.
\end{align}
then $F$ is mean coercive.
\end{lemma}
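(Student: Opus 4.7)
The plan is to exploit the decomposition of $F(\varphi)$ already written out in \eqref{decomp} and to treat each of the two pieces separately by normalising its Dirichlet coefficient to $1$ and then invoking the appropriate mean-Hadamard inequality from \cite{BKV23}. The first piece involves a single `island' of pressure; the second piece is of pure-insulation type.

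Assume $\lambda>0$ and divide the first summand of \eqref{decomp} by $\lambda$:
\[
\tfrac{1}{\lambda}F_{\lambda}(\varphi,\cdot)=\int_{Q}|\nabla\varphi|^{2}+M(\chi_{Q_{1}}+\chi_{Q_{3}})\det\nabla\varphi\dx,\qquad M:=-\tfrac{2\sigma}{\lambda}.
\]
With the open set $\omega:=Q_{1}\cup Q_{3}\subset Q$ this is exactly the form treated by Proposition 3.4 of \cite{BKV23}, which guarantees nonnegativity on $C_{c}^{\infty}(Q;\R^{2})$ whenever $|M|\leq 4$. That translates to $|\sigma|\leq 2\lambda$, which is the left-hand inequality in \eqref{tightrope}. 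Similarly, assuming $\lambda<1$ and dividing the second summand by $1-\lambda$,
\[
\tfrac{1}{1-\lambda}F_{1-\lambda}(\varphi,\cdot)=\int_{Q}|\nabla\varphi|^{2}+N(\chi_{Q_{3}}-\chi_{Q_{1}})\det\nabla\varphi\dx,\qquad N:=\tfrac{\tau}{1-\lambda}.
\]
The pressure is now a `pure insulation' with equal and opposite weights on two diagonally opposite sub-squares, and the corresponding sharp mean-Hadamard threshold established in \cite{BKV23} is $|N|\leq\sqrt{8}$. This is exactly the right-hand inequality in \eqref{tightrope}.

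For the mean coercivity claim, assume the strict bounds \eqref{tightrope2}. Choose $\eps>0$ small enough that the two inequalities in \eqref{tightrope} still hold when $\lambda$ and $1-\lambda$ are replaced by $\lambda-\eps$ and $(1-\lambda)-\eps$ respectively; this is possible by continuity. Rewrite
\[
F(\varphi)=2\eps\!\int_{Q}|\nabla\varphi|^{2}\dx+\widetilde F_{\lambda-\eps}(\varphi)+\widetilde F_{(1-\lambda)-\eps}(\varphi),
\]
where the two tilded functionals are of the same shape as the summands of \eqref{decomp} but with Dirichlet coefficients $\lambda-\eps$ and $(1-\lambda)-\eps$; by the first part of the lemma each is nonnegative on $H^{1}_{0}(Q;\R^{2})$. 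It follows that $F(\varphi)\geq 2\eps\int_{Q}|\nabla\varphi|^{2}\dx$ on $H_{0}^{1}(Q;\R^{2})$, which is mean coercivity with $\gamma=2\eps$.

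The main obstacle I anticipate is matching the constant $\sqrt{8}$ to the precise pure-insulation result of \cite{BKV23} for the geometry $\pm\chi_{Q_{1}/Q_{3}}$ inside $Q$. If the cited proposition gives only the existence of \emph{some} constant $c$ rather than the explicit value $2\sqrt{2}$, one fallback is to verify the threshold directly by testing on an orthogonal decomposition of $\varphi$ according to its parity across the two coordinate axes, combining the pointwise Hadamard inequality $|\nabla\varphi|^{2}\geq 2|\det\nabla\varphi|$ on $Q_{2}\cup Q_{4}$ with a symmetry-exploiting estimate on $Q_{1}\cup Q_{3}$.
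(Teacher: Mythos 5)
Your overall strategy --- split the decomposition \eqref{decomp} into its two summands, normalise each to unit Dirichlet coefficient, and invoke the relevant mean--Hadamard threshold from \cite{BKV23}; then, for coercivity, perturb $\lambda$ by $\eps$ --- is the same as the paper's, and your mean-coercivity step (peeling $\eps\int|\nabla\varphi|^2$ from \emph{both} summands, getting $\gamma=2\eps$) is a slightly cleaner, more symmetric variant of what the paper does (which only borrows Dirichlet energy from the first summand, getting $\gamma=\lambda\eps$). Your worry about the constant $\sqrt{8}$ is unfounded: the paper cites exactly Proposition~4.6 of \cite{BKV23}, which gives $|d|\le\sqrt{8}$ as the sharp threshold for the weight $\chi_{Q_3}-\chi_{Q_1}$ on $Q$, so no fallback computation is needed.

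There is, however, a genuine gap in your treatment of the \emph{first} summand. You declare ``with the open set $\omega:=Q_1\cup Q_3\subset Q$ this is exactly the form treated by Proposition~3.4 of \cite{BKV23}'' and apply it directly. But $Q_1\cup Q_3$ consists of two open quadrants meeting only at the origin, so it is disconnected and in particular not homeomorphic to an open ball --- which is precisely the topological hypothesis Proposition~3.4 requires. The paper resolves this by choosing a sequence of subdomains $\omega_n$ that \emph{are} homeomorphic to open balls, contain $Q_1\cup Q_3$, and satisfy $\chi_{\omega_n}\to\chi_{Q_1\cup Q_3}$ (e.g.\ $\omega_n=\{x\in Q:\dist(x,Q_1\cup Q_3)<1/n\}$); it applies Proposition~3.4 to each $\omega_n$ (still with the threshold $4$, since $\omega_n\subsetneq Q$) and passes to the limit by dominated convergence. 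Without this approximation argument, the application of Proposition~3.4 to a disconnected subdomain is unjustified, so you should insert it before the conclusion $|\sigma|/2\le\lambda$.
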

\begin{proof}
Firstly, write
\begin{align*}
  F_{\lambda}\left(\varphi, \  
\scalebox{0.7}{{\begin{tabular}{|c|c|}
 \hline
$0$ &$-2\sigma$ 
\\ \hline
$-2\sigma$ & $0$ \\
\hline
\end{tabular}}}\, \right) & = \lambda \int_{Q} |\nabla \varphi|^2 -\frac{2\sigma}{\lambda}\det \nabla \varphi \, (\chi_{Q_1}+\chi_{Q_3})\dx  
\end{align*}
and let $c=\frac{2\sigma}{\lambda}$.  Let $\omega_n \subset Q$ be any sequence of subsets with the properties that (i) $\omega_n$ is homeomorphic to an open ball in $\R^2$, (ii) $Q_1\cup Q_3 \subset \omega_n$ for all $n$, and (iii) $\chi_{\omega_{n}} \to \chi_{Q_1\cup Q_3}$ as $n \to \infty$.  For example, the sets 
\begin{align*}
    \omega_n:=\left\{x \in Q: \ \textrm{dist}\,(x,Q_1\cup Q_3) <\frac{1}{n}\right\}
\end{align*}
fulfill (i)-(iii).  Then, by \cite[Proposition 3.4]{BKV23}, for each $n \in \N$ the functional 
\begin{align*} F_n(\varphi):=\int_{Q} |\nabla \varphi|^2 - c \det \nabla \varphi \, \chi_{\omega_n} \dx \end{align*}
is nonnegative on $C_c^{\infty}(Q;\R^2)$ if and only if $|c|\leq 4$.  By letting $n \to \infty$ and noting that, by dominated convergence,  
\begin{align*}
    F_n(\varphi) \to \int_{Q} |\nabla \varphi|^2 -c\det \nabla \varphi \, (\chi_{Q_1}+\chi_{Q_3})\dx,  
\end{align*}
it follows that so too is $F_{\lambda}(\varphi)$ nonnegative on 
$C_c^{\infty}(Q;\R^2)$ if and only if $|c|\leq 4$. This is equivalent to $\frac{|\sigma|}{2} \leq \lambda$.  

Next, rewrite 
\begin{align*}
    F_{1-\lambda}\left(\varphi,\  
\scalebox{0.7}{{\begin{tabular}{|c|c|}
 \hline
$0$ &$-\tau$ 
\\ \hline
$\tau$ & $0$ \\
\hline
\end{tabular}}}\, \right) = (1-\lambda)\int_Q |\nabla \varphi|^2 +\frac{\tau}{1-\lambda} \det \nabla \varphi \, (\chi_{Q_3}-\chi_{Q_1})  \dx
\end{align*}
and let $d=\frac{\tau}{1-\lambda}$.  According to 
\cite[Proposition 4.6]{BKV23}, the functional 
\begin{align*}
   \varphi \mapsto \int_Q |\nabla \varphi|^2 +d \det \nabla \varphi \, (\chi_{Q_3}-\chi_{Q_1})  \dx
\end{align*}
is nonnegative on $C_c^{\infty}(Q;\R^2)$ if and only if $|d|\leq \sqrt{8}$, which is equivalent to $\lambda \leq 1-\frac{|\tau|}{\sqrt{8}}$.  Putting both inequalities involving $\lambda$ together yields \eqref{tightrope}. 

Now suppose that \eqref{tightrope2} holds so that, in particular, $|\frac{2\sigma}{\lambda}|<4$. Consider 
\begin{align*}
  F_{\lambda}\left(\varphi, \  
\scalebox{0.7}{{\begin{tabular}{|c|c|}
 \hline
$0$ &$-2\sigma$ 
\\ \hline
$-2\sigma$ & $0$ \\
\hline
\end{tabular}}}\, \right) & = \lambda \int_{Q} |\nabla \varphi|^2 -\frac{2\sigma}{\lambda}\det \nabla \varphi \, (\chi_{Q_1}+\chi_{Q_3})\dx  \\
& = \lambda \eps \int_{Q}|\nabla \varphi|^2 \, \dx + \lambda(1-\eps)\left(\int_{Q}|\nabla \varphi|^2 -\frac{2\sigma}{\lambda(1-\eps)}\det \nabla \varphi \, \dx \right)
\end{align*}
and notice that for sufficiently small $\eps$ we may assume that $|\frac{2\sigma}{\lambda}(1-\eps)|\leq 4$ and hence that the functional on the right is nonnegative.  Using this and \eqref{decomp} we therefore have that 
$$F(\varphi) \geq \gamma \int_{Q} |\nabla \varphi|^2 \,\dx \quad \forall \varphi \in C_c^{\infty}(Q;\R^2)$$
where $\gamma=\lambda \eps$.
\end{proof}

It follows from Lemma \ref{gap} and \eqref{decomp} that as long as  \begin{align}\label{pared}\frac{|\sigma|}{2} \leq 1-\frac{|\tau|}{\sqrt{8}},\end{align} then $\lambda$ can be chosen to lie between these values, and hence $F(\varphi) \geq 0$ for all $\varphi \in C_c^{\infty}(Q;\R^2)$.  But the choice of $\sigma$ and $\tau$ is not free: one parameter is subordinated to the other through \eqref{alpha}, \eqref{beta} and \eqref{alphabet}, which when combined lead to
\begin{align}\label{sigmatau}
\tau^4-8\tau^2\sigma^2+32\tau\sigma+16\sigma^4=0.
\end{align}
Thus, in order to conclude, we seek solutions to \eqref{sigmatau} such that \eqref{pared} holds.  
A brief numerical investigation, which we summarise in Proposition \ref{step2} below, reveals (at least) one `branch' of solutions $(\sigma,\tau)$ which obey both \eqref{sigmatau} and
\begin{align}\label{yy}
  y(\sigma,\tau):=\frac{|\sigma|}{2}  + \frac{|\tau|}{\sqrt{8}}  \leq 1.
\end{align}

\begin{figure}
    \centering
    \includegraphics[width=0.9\linewidth]{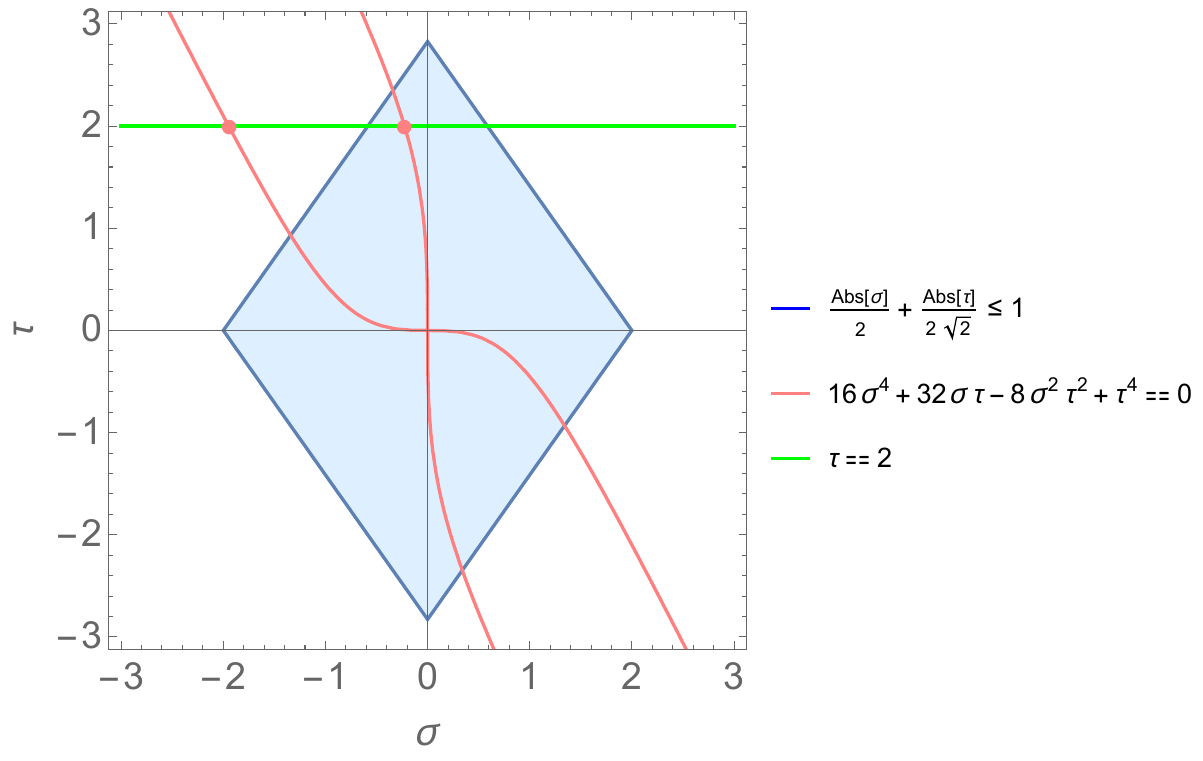}
    \vspace{-0.5cm}
    \caption{Visualization of Proposition \ref{step2} in Mathematica: the set $y(\sigma,\tau)$ (in blue), two branches of $h(\sigma,\tau)=0$ (in orange) and two intersection points $\omega_1, \omega_0$ (in orange) with the line $\tau=\tau_0$ (in green).}
    \label{fig:enter-label}
\end{figure}

\begin{proposition}\label{step2} Let $y(\sigma,\tau)$ be given by \eqref{yy}.  Then there are infinitely many solutions $(\sigma,\tau)$ to the equation \eqref{sigmatau} which also obey $y(\sigma,\tau)<1.$
\end{proposition}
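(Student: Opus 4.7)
The plan is to write down a one-parameter family of solutions $(\sigma,\tau)$ of the quartic \eqref{sigmatau} explicitly, and then to argue by continuity that a non-trivial open sub-family already lies in the region $\{y<1\}$.

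First, I would retrace how \eqref{sigmatau} was obtained. It arises by combining the condition $\Delta=0$, which in view of \eqref{gamma} and \eqref{delta} takes the form $\alpha^{2}\beta^{2}+4(\alpha^{2}-\beta^{2})=0$, with the linear substitutions $\alpha=2\sigma+\tau$ and $\beta=\tau-2\sigma$. Solving the equation $\Delta=0$ for $\alpha$ in terms of $\beta$ gives the smooth function $\alpha(\beta)=2\beta/\sqrt{\beta^{2}+4}$, which is exactly the choice already made in \eqref{alphabet}. Inverting the change of variables ($\sigma=(\alpha-\beta)/4$, $\tau=(\alpha+\beta)/2$), define
$$\sigma(\beta):=\frac{1}{4}\left(\frac{2\beta}{\sqrt{\beta^{2}+4}}-\beta\right),\qquad \tau(\beta):=\frac{1}{2}\left(\frac{2\beta}{\sqrt{\beta^{2}+4}}+\beta\right).$$
By construction the pair $(\sigma(\beta),\tau(\beta))$ satisfies \eqref{sigmatau} for every $\beta\in\mathbb{R}$.

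Second, I would verify that distinct values of $\beta$ yield distinct pairs, at least near the origin. A direct differentiation at $\beta=0$ gives
$$\tau'(0)=\frac{4}{(\beta^{2}+4)^{3/2}}\bigg|_{\beta=0}+\frac{1}{2}=\frac{1}{2}+\frac{1}{2}=1\neq 0,$$
so $\tau(\beta)$ is strictly monotonic in a neighbourhood of $0$, and hence the map $\beta\mapsto(\sigma(\beta),\tau(\beta))$ is locally injective. This already produces a continuum, and in particular infinitely many, distinct solutions of \eqref{sigmatau}.

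Third, since $\sigma(0)=\tau(0)=0$, the composition $\beta\mapsto y(\sigma(\beta),\tau(\beta))$ is continuous with value $0$ at $\beta=0$. By continuity there exists $\beta_{0}>0$ such that $y(\sigma(\beta),\tau(\beta))<1$ for every $\beta\in(-\beta_{0},\beta_{0})$, and combined with injectivity this supplies infinitely many pairs $(\sigma,\tau)$ solving \eqref{sigmatau} with $y(\sigma,\tau)<1$. The only subtle point, and so the closest thing to an obstacle, is the realisation that \eqref{sigmatau} is not a new constraint but merely the restatement of the scalar equation $\Delta=0$ in the $(\sigma,\tau)$-variables, which has already been solved explicitly via \eqref{alphabet}; once this is recognised, everything else is a routine continuity argument anchored at the trivial solution $(\sigma,\tau)=(0,0)$, which plainly lies in the open set $\{y<1\}$.
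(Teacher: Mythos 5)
Your argument is correct, and it is genuinely different from—and simpler than—the paper's. The key observation you make is that \eqref{sigmatau} is precisely the equation $\Delta=0$ re-expressed in the variables $(\sigma,\tau)$ via the linear change of variables $\alpha=2\sigma+\tau$, $\beta=\tau-2\sigma$; one then has an explicit parametrization of one branch of the solution set by $\beta$, namely $\alpha(\beta)=2\beta/\sqrt{\beta^2+4}$, $\sigma(\beta)=(\alpha(\beta)-\beta)/4$, $\tau(\beta)=(\alpha(\beta)+\beta)/2$. Anchoring the continuity argument at the trivial point $\beta=0$ (so $(\sigma,\tau)=(0,0)$, $y=0$) then gives infinitely many solutions with $y<1$ without any implicit function theorem or numerical root-finding. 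The paper instead defines $h(\sigma,\tau)$ as the left side of \eqref{sigmatau}, locates a nontrivial root $(\sigma_0,\tau_0)\approx(-0.2253,\,2)$ numerically, checks that $h_\sigma(\sigma_0,\tau_0)\neq 0$ and $y(\sigma_0,\tau_0)\approx 0.82<1$, and then applies the Implicit Function Theorem to obtain a smooth branch $\mathcal{B}$ on which $y<1$ persists. What the paper's approach buys is a branch $\mathcal{B}$ centred at an order-one point, which is then referred to explicitly in Proposition \ref{culmination}; your branch accumulates at the degenerate point $(0,0)$, where $f\equiv 0$. That said, your parametrization does yield nontrivial solutions for $\beta\neq 0$ small (in fact $\sigma(\beta)\sim -\beta^3/32$ and $\tau(\beta)\sim\beta$, so $\sigma\tau\neq 0$ and $\beta\neq 0$, which is all Proposition \ref{step1} requires), so your solutions would serve the same downstream purpose. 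One could sharpen your write-up by noting this explicitly—i.e.\@ that the pairs you produce for $0<|\beta|<\beta_0$ are nontrivial and satisfy the non-degeneracy hypotheses of Propositions \ref{hydrostatic_solutions} and \ref{step1}—so that it is clear you are not merely producing the useless solution $(0,0)$ repeated.
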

\begin{proof}
Let
\begin{align*}
    h(\sigma,\tau)=\tau^4-8\tau^2\sigma^2+32\tau\sigma+16\sigma^4
\end{align*}
and notice that $h(-\frac{\tau}{2},\tau)=-16\tau^2$ and $h(0,\tau)=\tau^4$. Hence, for each $\tau \neq 0$ there is at least one $\sigma=\sigma(\tau)$ in the interval $(-\frac{\tau}{2},0)$ such that $h(\sigma(\tau),\tau)=0$.  Letting $\tau_0=2$ and solving $h(\sigma,\tau_0)=0$ (using, for example, Mathematica) produces two solutions, $\sigma_0$ and $\sigma_1$, say, which to 4 d.p. are
$$\sigma_0\simeq -0.2253 \ \ \mathrm{and} \ \ \sigma_1\simeq -1.9470.$$
More precisely, $\sigma_0$ and $\sigma_1$ are the only real roots of the polynomial $$p_4(\sigma)=1 + 4\sigma -2 \sigma^2+ \sigma^4.$$
Now, since $h_{\sigma}(\sigma,\tau_0)=64(\sigma^3-\sigma+1)$, it is easily checked that 
$$h_{\sigma}(\sigma_0,\tau_0) \neq 0,$$
and hence, by the Implicit Function Theorem, for suitably small $\epsilon>0$ there is a smooth branch 
\begin{align}\label{bee}
\mathcal{B}:=\{(\sigma(\tau),\tau): \tau \in (2-\eps,2+\eps)\}
\end{align}
of solutions to \eqref{sigmatau} emanating from the point $(\sigma_0,\tau_0)$.  Now we compute $$y(\sigma_0,\tau_0)\simeq 0.8197$$ to 4 d.p., and so it follows by continuity that 
$$y(\sigma(\tau),\tau) <1$$
for all $\tau$ sufficiently close to $\tau_0=2$.   By taking $\epsilon$ in the description of $\mathcal{B}$ smaller still if necessary, we can assume that $y(\sigma(\tau),\tau) < 1$ if $(\sigma(\tau),\tau) \in \mathcal{B}$. 
\end{proof}

Finally, using Lemma \ref{gap} and Proposition \ref{step2}, it follows that \eqref{tightrope2} holds and hence $F$ is mean coercive.  This enables us to prove, via Theorem \ref{minFgivesminD}, the following.

\begin{proposition}\label{culmination} Let $f$ be given by
\begin{align*}
    f = \sigma\chi_{Q_{2}}-(\tau+\sigma)\chi_{Q_{1}}+(\tau-\sigma)\chi_{Q_{3}}+\sigma \chi_{Q_{4}},
\end{align*}
where $(\sigma,\tau)$ belong to $\mathcal{B}$ as defined in \eqref{bee}.  Let $u$ be a solution of 
\begin{align*} \int_{Q} 2 \nabla u \cdot \nabla \varphi +  f(x)  \, \cof \nabla u \cdot \nabla \varphi \dx = 0 \quad \quad \varphi \in C_c^{\infty}(\om),
\end{align*}
 as provided by Proposition \ref{hydrostatic_solutions}, and
 define the function $g:=\det \nabla u$.  As before, set
 $$\sca_g=\{v\in H^1_{u_0}(Q;\R^2): \det \nabla v = g \ \textrm{a.e. in} \ Q\}$$
 where $u_0:=u$.   Then $u$ is a global minimizer of the Dirichlet energy $\mathbb{D}(u)$ in $\mathcal{A}_g$.    
\end{proposition}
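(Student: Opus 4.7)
The plan is to verify the hypotheses of Theorem \ref{minFgivesminD} for the pressure $f$ and the solution $u$ in question, and then let that theorem do the work. The two things that must be checked are (a) that $F$ is mean coercive on $H_0^1(Q;\R^2)$, and (b) that the solution $u$ produced by Proposition \ref{hydrostatic_solutions} is actually the global minimizer of $F$ over the admissible class $H^1_{u_0}(Q;\R^2)$ with $u_0:=u$.

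For (a), I would combine Lemma \ref{gap} with Proposition \ref{step2}. Since $(\sigma,\tau)$ is chosen to lie on the branch $\mathcal{B}$, Proposition \ref{step2} guarantees that $y(\sigma,\tau)<1$, i.e.\ $\frac{|\sigma|}{2}+\frac{|\tau|}{\sqrt{8}}<1$. Consequently there exists $\lambda\in(0,1)$ with
\begin{align*}
\frac{|\sigma|}{2}<\lambda<1-\frac{|\tau|}{\sqrt{8}},
\end{align*}
which is exactly the strict inequality \eqref{tightrope2}. The second conclusion of Lemma \ref{gap} then yields the mean coercivity of $F$ on $C_c^{\infty}(Q;\R^2)$, and by a standard density argument this extends to $H^1_0(Q;\R^2)$. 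Moreover, by construction $(\sigma,\tau)\in\mathcal{B}$ forces the algebraic identity \eqref{sigmatau}, which in turn encodes $\Delta=0$ and the non-degeneracy conditions imposed in Proposition \ref{step1}, so the EL equation \eqref{pdeprime} genuinely admits the non-harmonic solutions described there.

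For (b), I invoke Proposition \ref{basicfact}: once $F$ is mean coercive, $F$ admits a unique minimizer in $H^1_{u_0}(Q;\R^2)$, and this minimizer is characterized as the unique $H^1_{u_0}$ solution of the weak Euler--Lagrange equation $a(u,\varphi)=0$ for all $\varphi\in H_0^1(Q;\R^2)$. Setting the boundary datum to be $u_0:=u$, where $u$ is the solution supplied by Proposition \ref{hydrostatic_solutions}, we see that this $u$ trivially satisfies its own boundary trace and (by Proposition \ref{nec2}, which is the converse half) solves the weak form of \eqref{pdeprime}. Uniqueness of minimizers of $F$ in $H^1_{u_0}(Q;\R^2)$ therefore identifies $u$ itself as that minimizer.

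Finally, define $g:=\det\nabla u$, so that $u\in\sca_g$ by construction, and apply Theorem \ref{minFgivesminD}. Its hypotheses are now in place: $F$ is mean coercive and $u$ minimizes $F$ on the free class $H^1_{u_0}(Q;\R^2)$. The conclusion of the theorem is precisely that $u$ is the unique global minimizer of the Dirichlet energy $\D$ in $\sca_g$. The only genuinely delicate step is (a), where one must be careful that the branch $\mathcal{B}$ produced in Proposition \ref{step2} yields parameters compatible with the strict form of \eqref{tightrope}; everything else is bookkeeping and an appeal to the machinery already developed.
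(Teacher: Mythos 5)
Your proof is correct and follows essentially the same route as the paper, which (just before the statement of the proposition) notes that Lemma \ref{gap} together with Proposition \ref{step2} gives \eqref{tightrope2} and hence the mean coercivity of $F$, and then appeals to Theorem \ref{minFgivesminD}. The one point you state slightly more strongly than Proposition \ref{basicfact} does -- that mean coercivity makes the weak Euler--Lagrange equation \emph{characterize} the minimizer -- is indeed an immediate consequence of the decomposition \eqref{MKdecomposition}, so the argument is sound.
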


\section{Numerical experiments in the planar case}\label{numericalresults}
 The MATLAB code of \cite{BKV23} based on \cite{MoVa} was extended to treat the non-homogeneous Dirichlet boundary condition $u_0$. A minimizer $u 
\in H^1_{u_0}(\om;\R^2)$ of \eqref{Fintro} is approximated by the finite element method (FEM) with the lowest order (known as P1) basis functions defined on a regular triangulation of the domain $\Omega$.  It is calculated using the trust-region method from the MATLAB Optimization Toolbox. 
The weak form \eqref{MKweakform} is discretized as the system of linear equations 
\begin{align}\label{MKweakform}
 (2 K_1 + K_2) \, \Vec{u} = 0.
 \end{align}
Here, a vector $\Vec{u} \in \mathbb{R}^{2n}$ represents the minimizer $u = (u_1, u_2)$ and $n$ denotes the number of triangulation nodes. 
The stiffness matrices $K_1$, $K_2 \in \mathbb{R}^{2n \times 2n}$ are constructed efficiently using the modification of \cite{RaVa} and correspond to the assembly of bilinear forms
\begin{align}
   & \int_{\om} \nabla\psi:\nabla u \dx= 
   \int_{\om}
    \begin{pmatrix}
        \frac{\partial \psi_1}{\partial x_1} & \frac{\partial \psi_1}{\partial x_2} \\
         \frac{\partial \psi_2}{\partial x_1} &  \frac{\partial \psi_2}{\partial x_2}
    \end{pmatrix}
    :
     \begin{pmatrix}
        \frac{\partial u_1}{\partial x_1} & \frac{\partial u_1}{\partial x_2} \\
         \frac{\partial u_2}{\partial x_1} &  \frac{\partial u_2}{\partial x_2}
    \end{pmatrix}
    \dx
 ,  
 \\
    & \int_{\om} f \, \cof\nabla\psi:\nabla u \dx= 
    \int_{\om} f \begin{pmatrix*}[r]
        \frac{\partial \psi_2}{\partial x_2} & -\frac{\partial \psi_2}{\partial x_1} \\
         -\frac{\partial \psi_1}{\partial x_2} &  \frac{\partial \psi_1}{\partial x_1}
    \end{pmatrix*}
    :
     \begin{pmatrix*}
        \frac{\partial u_1}{\partial x_1} & \frac{\partial u_1}{\partial x_2} \\
         \frac{\partial u_2}{\partial x_1} &  \frac{\partial u_2}{\partial x_2}
    \end{pmatrix*} \dx . 
\end{align}
The matrix $K_1$ is symmetric and is constructed of two identical stiffness matrices corresponding to the discretization of the Laplace operator for the scalar variable. 
The matrix $K_2$ is non-symmetric and combines the products of the mixed derivatives of the basis functions further weighted by the function $f$. The function $f$ is assumed to be a piecewise constant in smaller subdomains. If the triangulation is aligned with subdomain shapes, then the numerical quadrature of both terms in \eqref{Fintro} is exact. An additional mesh adaptivity is applied using the MATLAB Partial Differential Equation Toolbox to enhance accuracy across nonlinear subdomain boundaries; see Figures \ref{fig:mesh_disk_disk_adaptive}, \ref{fig:mesh_disk_section_adaptive}.
A complementary code is available for download and testing at
\begin{center}
\url{https://www.mathworks.com/matlabcentral/fileexchange/130564} .
\end{center}
\vspace{0.5mm}

 \begin{figure}
\includegraphics[width=0.8\textwidth]{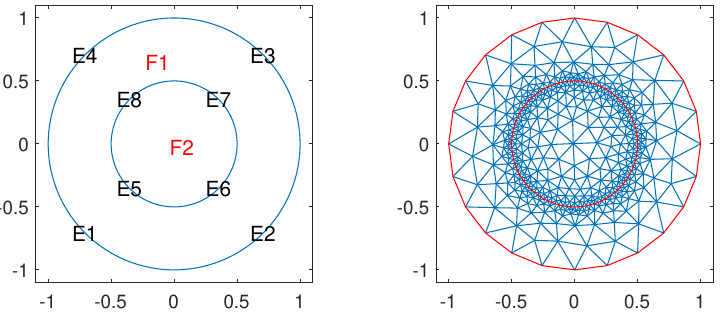}
\caption{A disk-disk geometry 
(left) and the example of its adaptive mesh refinement (right).
}
\label{fig:mesh_disk_disk_adaptive}
\vspace{1cm}
\includegraphics[width=0.8\textwidth]{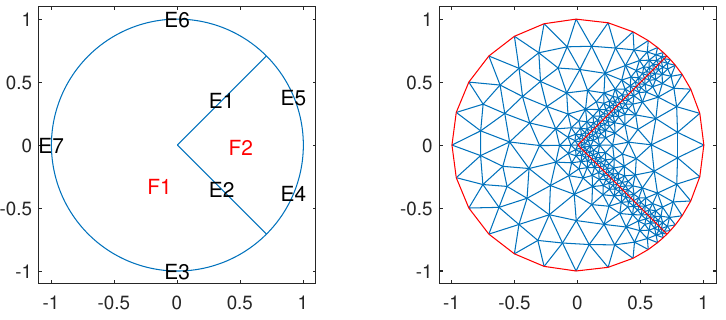}
\caption{A disk-section geometry 
(left) and the example of its adaptive mesh refinement (right).
}
\label{fig:mesh_disk_section_adaptive}
\vspace{1cm}
\includegraphics[width=\textwidth]{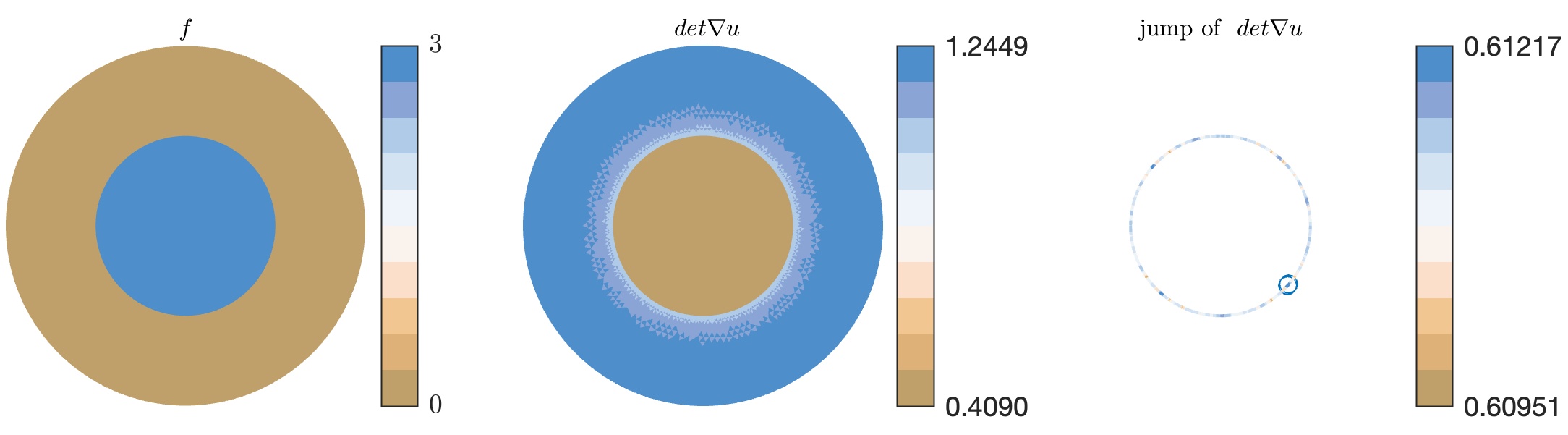}
\caption{Distribution of $f$ (left), $\det \nabla u$ (middle), and the jump 
of $\det \nabla u$ (right) across the interface boundary.  Here, the boundary condition in force is $u_0(x)=x$.
}
\label{fig:disk_disk_adaptive}
\end{figure}

 \subsection{Disk-disk problem}\label{diskdisk}
 Let us compare the analytical solution $u$, given by \eqref{diskdisksol}, to the Euler-Lagrange equation \eqref{ELinhombc}  with its numerically generated counterpart.  For concreteness we set the parameters $\rho=0.5$ (inner disk radius) and $M=3$, from which it follows that 
 $\zeta
 = 0.64, \xi
 = 1.12    $
and $\det \nabla u(x)$ is an axisymmetric function satisfying 
\begin{align}
 & \min_{x \in \Omega} \det \nabla u =  0.4096, \qquad \max_{x \in \Omega} \det \nabla u =  1.24   \\
 & \lim_{|x| \rightarrow \rho -} \det \nabla u - \lim_{|x| \rightarrow \rho +} \det \nabla u = 1.024 - 0.4096 = 0.6144.
\end{align}
The FEM calculation using 13930 triangles and 7066 nodes shows similar values: see Figure \ref{fig:disk_disk_adaptive} and, particularly, its colorbar limits.

 \subsection{Disk-sector problem}
Let us compare the analytical solution $u$ given by \eqref{example1}, \eqref{beeth5} to the Euler-Lagrange equation \eqref{ELinhombc} with those generated using the numerical methods described above.  The geometry is as shown in Figure \ref{fig:disk_sector_nid}, and the free parameter $M$ featuring in Subsection  \ref{disk-sector} is set equal to $3$.
 We find that $\det \nabla u(x)$ satisfies  
\begin{align}
 & \min_{x \in \Omega} \det \nabla u =  0, \qquad \max_{x \in \Omega} \det \nabla u =  6  \\
 & \max_{R \in [0,1]}  \left( 
 \lim_{|\theta| \rightarrow \pi /4 +} \det \nabla u 
 - 
 \lim_{|\theta| \rightarrow \pi /4 -} \det \nabla u
 \right) = 6.
\end{align}
The FEM calculation using 11316 triangles and 5765 nodes shows very 
similar values:  see Figure \ref{fig:disk_sector_nid} and, particularly, its colorbar limits.
\begin{figure}
\includegraphics[width=\textwidth]{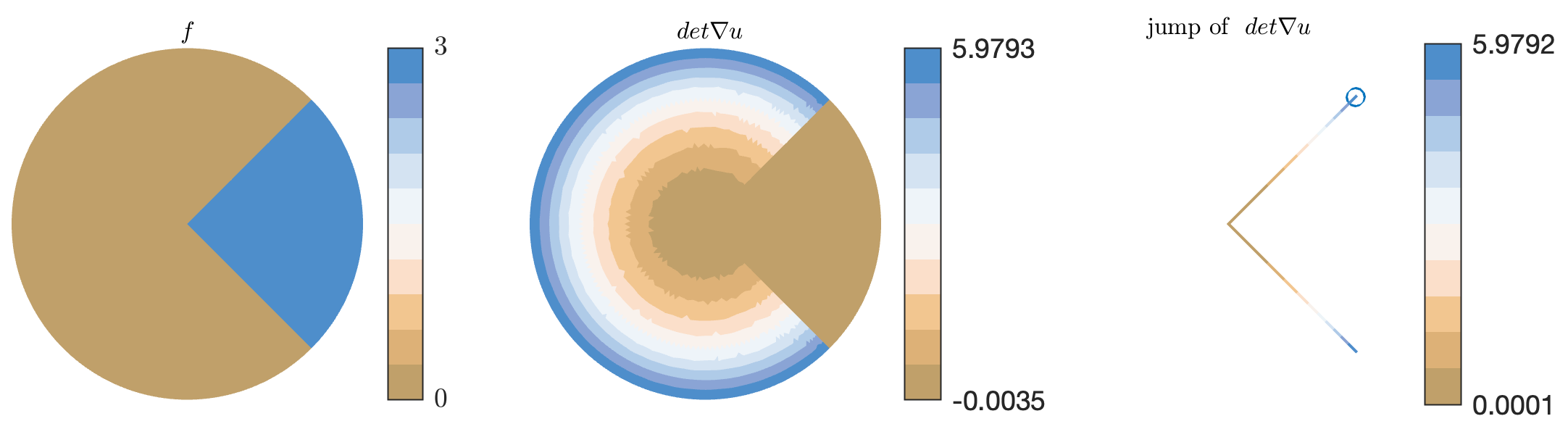}
\caption{Distribution of $f$ (left), $\det \nabla u$ (middle), and the jump (in modulus) of $\det \nabla u$ (right) across the interface boundary. The boundary condition $u_0$ is given by \eqref{beeth5} with $R:=1$;  in particular $u_0$ is not the identity map. 
}
\label{fig:disk_sector_nid}
\vspace{0.5cm}
\includegraphics[width=\textwidth]{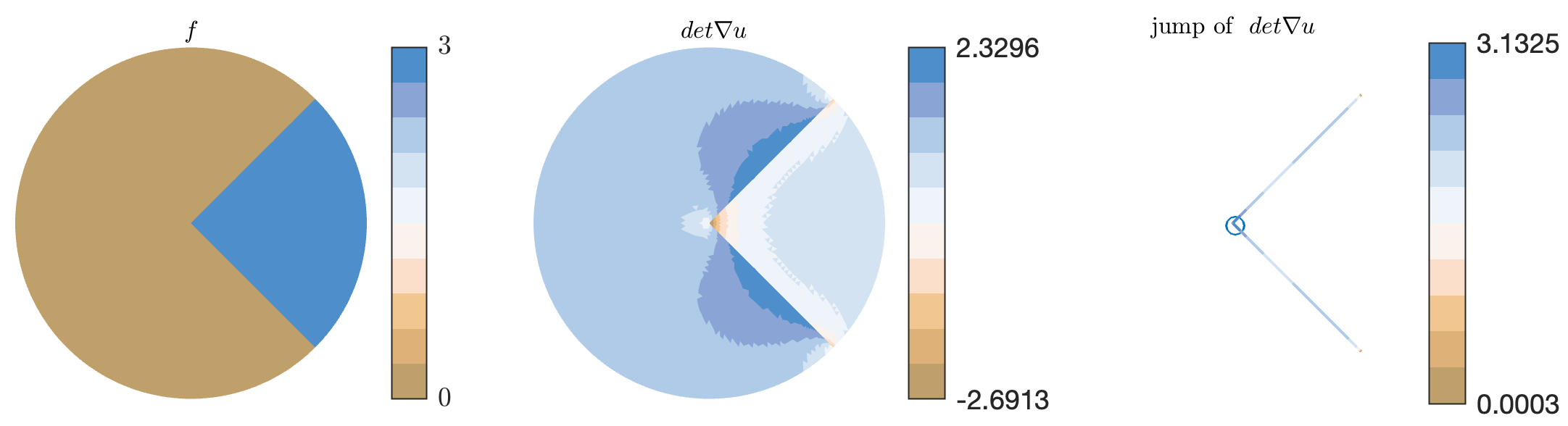}
\caption{Distribution of $f$ (left), $\det \nabla u$ (middle), and the jump (in modulus) of $\det \nabla u$ (right) across the interface boundary. 
Here, the boundary condition in force is $u_0(x)=x$.
}
\label{fig:disk_section_id}
\end{figure}

Our final numerical result goes beyond what we can say analytically.  Specifically, in Figure \ref{fig:disk_section_id}, aspects of the numerical solution $u$ to \eqref{ELinhombc} are shown when the boundary condition $u_0$ obeys $u_0(x)=x$ for $x \in \partial B$.  We cannot make a direct comparison with an analytical solution here because $u_0$ is not the suitably prepared type needed, for example, in Proposition \ref{tango}, and it is not clear how to render it so.

\bigskip 

\noindent
{\bf Acknowledgment.}

   MK and JV were partially supported by the GA\v{C}R project 23-04766S. They thank the Department of Mathematics of the University of Surrey for the hospitality during their stays there.  JB would like to thank MK, JV and UTIA, Czech Academy of Sciences for their hospitality during his visits. 

\end{document}